\definecolor{darkgreen}{rgb}{0.5,0.25,0}
\definecolor{darkblue}{rgb}{0,0,1}
\definecolor{answerblue}{rgb}{0,0,0.75}
\newcommand*{\mailto}[1]{\href{mailto:#1}{\nolinkurl{#1}}}
\renewcommand{\d}{\mathrm{d}}
\newcommand{\cell}{{\mathcal{C}}}
\DeclareMathOperator{\diag}{diag}
\newcommand{\R}{\mathbb{R}}
\newcommand{\T}{{\mathbb{T}}}
\newcommand{\N}{\mathbb{N}}
\newcommand{\Z}{\mathbb{Z}}
\newcommand{\bk}[1]{ \left(  #1 \right)}
\newcommand{\abs}[1]{ \mathopen|#1\mathclose|}
\newcommand{\norm}[1]{ \mathopen\|#1\mathclose\|}
\newcommand{\one}[1]{\mathds{1}_{#1}}
\newcommand{\Dx}{{\Delta x}}
\newcommand{\hf}{{\nicefrac12}}
\newcommand{\thf}{{\nicefrac32}}
\newcommand{\ind}{\mathbbm{1}}	
\renewcommand{\le}{\leqslant}
\renewcommand{\leq}{\leqslant}
\renewcommand{\ge}{\geqslant}
\renewcommand{\geq}{\geqslant}
\renewcommand{\phi}{ \varphi}
\newcommand{\from}{\colon}
\newcommand{\dconv}{\mathbin{\bm{*}}} 
\newcommand{\relspace}{\hphantom{{}={}}}
\theoremstyle{theorem}
\newtheorem{theorem}{Theorem}[section]
\newtheorem{proposition}[theorem]{Proposition}
\newtheorem{lemma}[theorem]{Lemma}
\newtheorem{corollary}[theorem]{Corollary}
\theoremstyle{definition}
\newtheorem{definition}[theorem]{Definition}
\theoremstyle{theorem}
\newtheorem{remark}[theorem]{Remark}
\numberwithin{equation}{section}
\title[Semi-discrete heat equations with variable coefficients]
{Semi-discrete heat equations with variable coefficients
and the parametrix method}
\author[Fjordholm]{U.S. Fjordholm}
\address[Ulrik S. Fjordholm]{Department of Mathematics\\
   University of Oslo\\
  NO-0316 Oslo\\ Norway}
\email{\mailto{ulriksf@math.uio.no}}
\author[Karlsen]{K.H. Karlsen}
\address[Kenneth H. Karlsen]{Department of Mathematics\\
   University of Oslo\\
  NO-0316 Oslo\\ Norway}
\email{\mailto{kennethk@math.uio.no}}
\author[Pang]{P.H.C. Pang}
\address[Peter H.C. Pang]{Department of Mathematics\\
   University of Oslo\\
  NO-0316 Oslo\\ Norway}
\email{\mailto{ptr@math.uio.no}}
\subjclass[2020]{{Primary:}
35K15,	
35K08	
{Secondary:}
65M06, 	
33C10.	
}
\keywords{finite difference scheme,
parametrix method, stability, convergence}
\thanks{We gratefully acknowledge the
support of the Research Council of Norway
through the projects INICE (301538)
and NASTRAN (351123).}
\date{\today}
\begin{document}

\begin{abstract}
We develop a parametrix approach for constructing
solutions and establishing grid size independent estimates
for semi-discrete heat equations with variable coefficients.
While the classical continuous setting benefits from Gaussian estimates
of the constant coefficient heat kernel, such estimates
are not available in the semi-discrete context.
To address this complication, we derive estimates involving products
of heavy-tailed Lorentz (also known as Cauchy) probability densities.
These Lorentzian estimates provide a sufficient
handle on certain iterated convolutions involving
Bessel functions, enabling us to
achieve convergence of the parametrix approach.
\end{abstract}

\maketitle


\section{Introduction}\label{sec:intro}
In this paper, we develop the parametrix approach
to construct solutions and establish grid size independent
estimates for the semi-discrete variable
coefficient heat equation
\begin{align}\label{eq:heat_equation1-intro}
	\frac{\d}{\d t}  u_\alpha(t)
	= \sum_{j = 1}^d c_\alpha^j \nabla_+^j
	\nabla_-^j  u_\alpha(t) + f_\alpha(t),
	\qquad \alpha \in \Z^d, \,\, d\ge 1.
\end{align}
Here, $\nabla_\pm^j$ denote the operators
of forward and backward differences in the $j$th direction,
defined on the grid $\Delta x \mathbb{Z}^d$, where
$\Dx > 0$ is the grid size parameter, $j \in \{1, \ldots, d\}$.
The coefficients $\{c_{\alpha}\}_{\alpha \in \mathbb{Z}^d}$
are positive and uniformly bounded away from
zero and infinity, while $\{f_\alpha\}_{\alpha
\in \Z^d}$ represents an inhomogeneity
with certain summability properties over
$\Delta x \Z^d\times[0,\infty)$.

The parametrix method in the continuous case
is a classical technique used to solve
parabolic equations with variable coefficients
\cite{Fri1964,Taylor:2011aa}. Consider the parabolic equation
$\partial_t u= A(x)u + f$, where $A(x)$ is a second-order
differential operator with variable coefficients, for example
$A(x)=\tfrac12c(x)\Delta$. To solve the 
Cauchy problem using the parametrix method,
one starts by constructing an initial
approximation $\Gamma_0$ (the parametrix)
to the fundamental solution
$\Gamma$ for $\partial_t-A(x)$.
Denote by $E$ the error, so that $\Gamma = \Gamma_0 + E$.
Then $E$ will satisfy the equation
\begin{equation}\label{eq:intro-error}
	\bigl( \partial_t - A(x) \bigr) E
	= -\bigl( \partial_t - A(x) \bigr) \Gamma_0 \eqqcolon K,
\end{equation}
which gives $E = \Gamma * K$, where $*$ refers
either to spatial or spatio-temporal convolution.
By substituting $\Gamma = \Gamma_0 + E$ into this
expression, we obtain
$$
E = \Gamma_0 * K + E * K
\quad \text{or} \quad \bigl(I - * K \bigr) E
= \Gamma_0 * K.
$$
The formal solution to this equation
is given by a Neumann series:
$$
E = \bigl(I - * K\bigr)^{-1}(\Gamma_0 * K)
= \sum_{n=1}^\infty (* K)^n(\Gamma_0),
$$
where $(* K)^n$ represents the
$n$-fold convolution operator.
If this series representation of the error $E$
converges, we obtain the required
function $\Gamma = \Gamma_0 + E$:
\begin{equation}\label{eq:intro-parametrix}
	\Gamma = \sum_{n=0}^\infty \Gamma_n, \qquad
	\Gamma_n\coloneqq(* K)^n(\Gamma_0).
\end{equation}
Given (the fundamental solution) $\Gamma$, the solution
to the variable coefficient
Cauchy problem for $\partial_t u = A(x)u + f$, with initial
condition $u(0) = u_0$, can be expressed
using Duhamel's formula.

How is $\Gamma_0$ is chosen? For the heat equation with
variable coefficient $c(\cdot)>0$,
i.e., $A(x)=\tfrac12c(x)\Delta$, define
the operator $A_{x_0}$ by ``freezing the coefficient" of $A$
at a point $x_0$: $A_{x_0}=\tfrac12c_0\Delta$, $c_0=c(x_0)$,
and then choose $\Gamma_0$ as the fundamental solution
of $\partial_t-A_{x_0}$, a Gaussian heat kernel:
\begin{equation}\label{eq:heat-kernel-cont}
	\Gamma_0(t -\tau,x,y) = \frac{1}{(4\pi c_0 (t-\tau))^{d/2}}
	\exp\left(-\frac{|x-y|^2}{4c_0(t-\tau)} \right).
\end{equation}
In this case, the right-hand side $K$ of the error equation
\eqref{eq:intro-error} simplifies into
\begin{equation}\label{eq:intro-Phi}
	K = \bigl(A(x) - A_{x_0}\bigr) \Gamma_0
	=\bigl(c(x)-c(x_0)\bigr) \Delta \Gamma_0.
\end{equation}

The parametrix approach is purely formal;
to make it rigorous, one must prove that the
series \eqref{eq:intro-parametrix} is convergent.
Technically, this is achieved by deriving Gaussian estimates
for $\Gamma_n$ (and for its derivatives):
\begin{equation}\label{eq:intro-Gaussian}
	\abs{\Gamma_n(t - \tau,x,y)} \leq
	\frac{C_1}{(t-\tau)^{d/2-\nu}}
	\exp\left(-C_2\frac{|x-y|^2}{(t-\tau)}\right),
	\quad n\in \N,
\end{equation}
where $C_1,C_2>0$ and $\nu\in \N$.
For details, see \cite[Ch.~9]{Fri1964} or
\cite[p.~56]{Taylor:2011aa}.

In this paper, we continue the work initiated in our previous work~\cite{Fjordholm:2023aa} and develop the parametrix approach in the
semi-discrete setting \eqref{eq:heat_equation1-intro},
continuous in time and discrete in space,
where estimates such as \eqref{eq:intro-Gaussian}
are not available.

For simplicity of
notation in this introduction,
let us consider the one-dimensional
case $d=1$. The fundamental
solution for the semi-discrete heat operator
$$
\frac{\partial}{\partial t}
-\frac{1}{2} c_0 \nabla_+ \nabla_-,
\qquad \nabla_{\pm} a_\alpha
\coloneqq \pm \frac{a_{\alpha \pm 1} - a_\alpha}{\Dx},
$$
with initial data $a_\alpha(0) = \delta_{\alpha}
\coloneqq \Dx^{-1} \one{\alpha = 0}$
and a constant $c_0 > 0$, is given by
\begin{equation}\label{eq:disc-heat}
	a_\alpha(t) = \frac{1}{\Dx} e^{-\frac{2c_0t}{\Dx^2}}
	I_\alpha\left(\frac{2c_0t}{\Dx^2}\right),
	\quad t \ge 0, \,\, \alpha \in \Z,
\end{equation}
where $I_k(t)$ is the Bessel function of the first kind and
order $k \in \Z$, see, e.g., \cite{BW2016,Feller:1966aa,Fjordholm:2023aa}.
The function $a_\alpha(t)$ represents the semi-discrete
heat kernel on $\Dx \Z$, playing a role analogous to
the Gaussian \eqref{eq:heat-kernel-cont} on $\R$. It serves
as the probability density function for a continuous-time
symmetric random walk on $\Dx \Z$, with $c_0$ denoting
the intensity of transitions between neighbouring grid points.

Given the semi-discrete heat kernel $a_\alpha(t)$, the
solution of the Cauchy problem with initial data
$\psi_\alpha$ is given by a discrete convolution between
$a_\alpha(t)$ and $\psi_\alpha(t)$.
Similarly, if the equation includes an inhomogeneity $f$,
one has a Duhamel solution formula for the corresponding
Cauchy problem. For more details, see
Lemma \ref{lem:duhamel_representation}.

During the 1980s and 1990s, remarkable efforts
were made to achieve precise
Gaussian bounds for the heat kernel associated with second-order
elliptic operators on domains in $\R^d$ and on manifolds.
This period saw the development of various deep techniques
to establish these estimates, as documented in the books
\cite{Davies:1989ab,Grigoryan:2009ab,Varopoulos:1992aa}.
The general methodologies were not confined to continuous settings
but were also adapted for discrete structures, particularly
to derive heat kernel bounds for continuous-time
random walks on discrete graphs (see, e.g., \cite{Pang:1993aa})
and symmetric simple exclusion processes 
(see, e.g., \cite{Lan2005}).
The estimates from references \cite{Davies:1989ab,Pang:1993aa}
do not fully cover the regimes relevant to our study, 
so we will not rely on them in this paper. 
However, their relation to our work will be discussed in 
Section \ref{sec:daviespang}.
We also mention the recent papers
\cite{CJKS2023,Jorgenson:2024aa}, which use a
parametrix approach to derive formulas for the
heat kernel on a graph $G$. For example, when $G$
(e.g., $\N_0$) 
is a subgraph of a 
larger graph $\tilde{G}$ (e.g., $\Z$), they derive the
heat kernel on $G$ from the heat kernel on
$\tilde{G}$ (e.g., \eqref{eq:disc-heat}
with $\Delta x = 1$) restricted to $G$.

Closer in spirit to our paper is the recent interest
in deriving estimates for {\em constant} coefficient
semi-discrete heat equations by working directly
with formulas like \eqref{eq:disc-heat}.
These studies \cite{AD2022,AGMP2021,ADM2024,CGRTV2017,Ignat:2006aa,
Lizama:2024ab,Slavik:2020aa,Slavik:2022aa}, employing
detailed estimates involving Bessel functions, have
explored the long-time behavior of solutions and
addressed various questions related
to the fundamental solution. This includes asymptotic
pointwise and $\ell^p$ decay results,
characterisations of discrete H\"older and Besov
spaces using ``semi-discrete" heat/Poisson semigroups,
and investigations in ``discrete" harmonic analysis such
as the $\ell^p$ boundedness of Riesz transforms and
Littlewood--Paley square functions.
In this paper, we confine our analysis of (1.1) to the case 
where the coefficient matrix $c_\alpha$ is diagonal, so that the
spatial directions are decoupled. One motivation for this 
restriction is that our approach relies on certain one-dimensional 
constant-coefficient estimates established in the 
previously discussed works. Generalizing these estimates to 
higher dimensions appears to be non-trivial 
\cite{AD2022,AGMP2021,ADM2024}.

Unlike the previously cited papers, we focus
on fundamental solutions and $\Dx$-independent
pointwise/$L^p$ estimates for the {\em variable coefficient} case.
Our approach will involve setting up a parametrix based on the
constant coefficient heat kernel \eqref{eq:disc-heat}.
Gaussian estimates for \eqref{eq:disc-heat} are unavailable near $t = 0$.
This makes the parametrix approach for the semi-discrete
problem more challenging than the
fully discrete or fully continuous cases.
The fully discrete setting, involving discretisation in
both time and space, corresponds to well-established numerical
methods, where the relevant estimates
are classical (see also \cite{AGR2023}).

In the absence of Gaussian estimates
for the constant coefficient heat kernel \eqref{eq:disc-heat},
we derive ``Lorentzian" estimates tailored to different temporal
regimes: $\{t \le C \Delta x^2\}$ and its complement.
For $m \ge 0$, let $(\nabla_\pm)^m$ denote $m$
successive applications of the discrete
derivative operators $\nabla_\pm$.
We establish the following pointwise estimates
for \eqref{eq:disc-heat} (see Proposition \ref{lem:green_estimates})
for $m=0,1,2$:
\begin{equation}\label{eq:pointwise_da-intro}
	\begin{aligned}
		\abs{(\nabla_\pm)^m a_\alpha(t)}
		\lesssim
		\bk{\frac1{\sqrt{2\overline{c}}}
		\wedge\frac{t^{\hf}}{\Dx}}^{Z(\alpha)}
		t^{-\bk{1 + m}/2}
		\bk{1 + \frac{\abs{x_\alpha}^2}{2\overline{c}t}
		+\frac{\abs{x_\alpha}^3}{\abs{2\overline{c}t}^{3/2}} }^{-1},
 	\end{aligned}
\end{equation}
for $t>0$, $\alpha\in \Z$, $x_\alpha=\alpha \Dx$,
and $Z(\alpha) = \one{\alpha = 0}$, and $\overline{c} \coloneqq \sup_{i \le d} c^i_0$.
Importantly, we identify a region $\{4 \overline{c} t < \Dx^2\}$
around $(t, \alpha) = (0, 0)$ where the (expected)
negative exponent factor $t^{-\bk{1 + m}/2}$ is
partly offset by the $t^{\hf}$ term. In particular,
$|a_0(t)|$ is bounded uniformly in $t$ and in $\Dx$ down to $t = 0$.
This insight is crucial for the convergence of
the parametrix method. The estimates \eqref{eq:pointwise_da-intro}
are sharp or near optimal for small times.
However, they are not asymptotically
accurate, and we also show that it is possible to
obtain pointwise Gaussian estimates
for larger times $t \ge C\Dx \abs{x_\alpha}$
(see Proposition \ref{thm:a_pointwise_exponential}
and \cite{Pang:1993aa}).

The Lorentzian estimates \eqref{eq:pointwise_da-intro}
of \eqref{eq:disc-heat} stand in as substitutes
of Gaussian estimates \eqref{eq:intro-Gaussian} for us.
This allows us to demonstrate that the
semi-discrete parametrix approach is well-defined
and converges to the fundamental
solution $\Gamma_{\alpha,\beta}(t)$
of the variable coefficient heat equation, i.e., the $C^1$
solution of the infinite ODE system
\begin{equation}\label{eq:fund_soln_variable_heat-intro}
	\frac{\d}{\d t}  \Gamma_{\alpha, \beta}(t)
	=  c_\alpha
	\nabla_+ \nabla_- \Gamma_{\alpha, \beta}(t),
	\quad
	 {\Gamma}_{\alpha,\beta}(0)
	= \delta_{\alpha-\beta},
	\quad
	t>0, \,\, \alpha,\beta\in\Z,
\end{equation}
where $\nabla_+ \nabla_-$ acts in the $\alpha$-slot.
Specifically, we prove that an iteratively corrected Neumann series
involving $n$-fold convolutions---spatially discrete
versions of \eqref{eq:intro-parametrix} and \eqref{eq:intro-Phi}
based on \eqref{eq:disc-heat}---converges to the fundamental solution
$\Gamma_{\alpha,\beta}$.

We rely on the $\thf$ order term in \eqref{eq:pointwise_da-intro}
to absorb certain ``bad" factors, ensuring the
convergence of the series and achieving the
following $\Dx$-independent pointwise
decay estimates (see Theorem \ref{lem:full_greensfunction}):
\begin{equation}\label{eq:Gamma-ab-ndiff-est1-intro}
	\begin{aligned}
		\bigl|(\nabla_\pm)^m \Gamma_{\alpha, \beta}(t)\bigr|
		&\lesssim
		\bk{\frac1{\sqrt{2\overline{c}}}
		\wedge\frac{t^{\hf}}{\Dx}}^{Z(\alpha - \beta)}
		t^{-\bk{1 + m}/2}
		\bk{1+\frac{{\big|x_\alpha -x_\beta\big|}^2}
		{2\overline{c}t}}^{-1},
	\end{aligned}
\end{equation}
for $0<t<T$, $\alpha\in \Z$,  $m = 0,1,\ldots$, and
$\overline{c} =\sup_{\alpha\in \Z} c_\alpha$.
We derive \eqref{eq:Gamma-ab-ndiff-est1-intro} using the
Lorentzian heat kernel estimates \eqref{eq:pointwise_da-intro}
and various bounds on convolution products
of Cauchy probability densities. Recall that
a Cauchy density is given by
\begin{equation}\label{eq:Cauchy-density}
	\mathcal{L}(x; x_0,\gamma)=\frac{\abs{\gamma}}{\pi}
	\frac{1}{(x-x_0)^2+\gamma^2},
	\quad x \in \R,
\end{equation}
where $x_0$ is the location parameter and $\abs{\gamma}$
is the scale parameter. Due to the
heavy tails of \eqref{eq:Cauchy-density}, random
variables with the Cauchy distribution,
also known as the Lorentz distribution, do
not have finite mean or variance. However,
the Cauchy distribution is closed
under convolution \cite[page 51]{Feller:1966aa},
implying that the sum of Cauchy random variables
also follows a Cauchy distribution.

Using the variable coefficient fundamental solution
\eqref{eq:fund_soln_variable_heat-intro} and
the pointwise decay estimates in
\eqref{eq:Gamma-ab-ndiff-est1-intro}, which
also imply $\ell^p$ estimates, we can formulate a Duhamel
solution formula for \eqref{eq:heat_equation1-intro} and
derive corresponding solution estimates.
For details, see Section \ref{sec:variable_coeff}.

Our work is inspired by \cite{Fjordholm:2023aa} 
and ongoing research in the numerical analysis 
of stochastic transport equations.
In \cite{Fjordholm:2023aa}, we identified
a regularisation by noise mechanism
in numerical methods that requires $\ell^p$ estimates
of $\Gamma_{\alpha,\beta}$. We
established these estimates directly, avoiding the need for
pointwise estimates and the convergence of the
parametrix approach over $[0,T]$ for any $T>0$,
which is the main focus of the present paper. 
We believe it is of independent interest to provide
a comprehensive analysis of the parametrix approach for
constructing solutions and establishing grid size independent estimates for 
semi-discrete heat equations with variable coefficients.
Since regularisation by noise is often 
achieved through the Itô-Tanaka trick (sometimes 
requiring additional Malliavin differentiability 
assumptions), the numerical observation of such 
regularisation naturally involves solving a parabolic equation. 
Beyond this motivation, parabolic equations 
play a central role in a wide range of models. 
The parametrix method is a fundamental tool 
in the analysis of linear parabolic equations 
with variable coefficients. Developing a 
numerical parametrix method for semi-discrete 
second-order parabolic equations is therefore 
of intrinsic interest and may also find applications 
in other contexts.

The remaining part of this paper is organised as follows:
We first establish some notational groundwork
in Section \ref{sec:discrete_setting}.
In Section \ref{sec:greens_functions}, we derive
refined pointwise estimates for the constant
coefficient semi-discrete heat equation.
These estimates are then used in
Section \ref{sec:variable_coeff} to
construct a parametrix for variable coefficient equations.
Finally, Section \ref{sec:aux_calc} presents the main
induction argument that establishes the pointwise
bounds on the constructed parametrix.

\section{Discrete notations}\label{sec:discrete_setting}
We begin by introducing notations for finite
difference operators, discrete spaces,
and stating a discrete convolution inequality.

Let $\Dx>0$ be the grid size parameter.
For indices $\alpha\in\Z^d$
we denote by $x_\alpha\coloneqq \alpha\Dx$
the grid points and we partition the
spatial domain $\R^d$ into grid cells
$$
\cell_\alpha \coloneqq x_\alpha
+[-\nicefrac{\Dx}{2}, \nicefrac{\Dx}{2})^d.
$$
We will denote by $L^\infty_\Dx(\R^d)$ 
the collection of functions that are uniformly bounded and 
piecewise constant on each cell $\cell_\alpha$,
and we set
$$
L^p_\Dx(\R^d)\coloneqq
L^\infty_\Dx(\R^d)\cap L^p(\R^d)
$$
for any $p\in[1,\infty]$. To distinguish
elements of these discrete spaces from
their continuous counterparts, we will
denote elements of $L^p_\Dx(\R^d)$
by bold symbols. By definition then,
any function $\bm{f}\in L^\infty_\Dx(\R^d)$
can be written as
\begin{equation*}
	\bm{f}(x) = \sum_{\alpha\in\Z^d} f_\alpha
	\one{\cell_\alpha}(x), \quad x\in\R^d, \,\, f_\alpha \in \R, 
\end{equation*}
and
$$
\|\bm{f}\|_{L^p(\R^d)} = \begin{cases}
\Bigl(\sum_{\alpha\in\Z^d}
|f_\alpha|^p \Dx^d\Bigr)^{1/p} & p<\infty,\\
\sup_{\alpha\in\Z^d}|f_\alpha| & p=\infty.
\end{cases}
$$
We will interchangeably
view elements $\bm{f}$ of $L^p_\Dx(\R^d)$ either
as functions of $x\in\R^d$ or as sequences
$f=f_\alpha=\{f_\alpha\}_{\alpha\in \Z^d}$
parametrised by $\alpha\in\Z^d$.
In the latter scenario, the aforementioned norm
is just the $\ell^p(\Dx^d)$-norm, which
scales the conventional discrete $\ell^p$-norm
by the grid volume $\Dx^d$.

For clarity and simplicity in notation, instead
of $\|\bm{f}\|_{L^p(\R^d)}$, we often use
$\|\bm{f}\|_{L^p_\alpha}$. In the case of mixed norms
for functions depending on two variables, denoted by
$f=f_{\alpha,\beta}=
\{f_{\alpha,\beta}\}_{\alpha,\beta \in \Z^d}$,
which can also be represented by a piecewise constant
function $\bm{f}$, we will employ the notation
$\|\bm{f}\|_{L^{p_1}_\alpha L^{p_2}_\beta}$.
This notation indicates that we first calculate
the $\ell^{p_2}(\Dx^d)$ norm with respect to the
$\beta$ index, followed by the $\ell^{p_1}(\Dx^d)$ norm
with respect to the $\alpha$ index.
When dealing with continuous time $t$,
similar notations will be employed. For instance,
if $f=f_\alpha(t)$, we will use mixed norms such
as $\|f\|_{L^q_t L^p_\alpha}$. This notation indicates that
we first calculate the $\ell^p(\Dx^d)$ norm over the spatial
indices $\alpha$, followed by computing the continuous
$L^p$ norm of the resulting function with
respect to time $t$.

Vector valued versions $L^p_\Dx(\R^d,\R^d)$
of the aforementioned spaces are defined
similarly; the components of
vectors in $\R^d$ will be indexed
by a superscript ${}^i$ for $i=1,\dots,d$.
We let $e_1,\dots,e_d\in\R^d$ denote the
canonical basis, $e_j^i = \delta_{i,j}$.

For $\bm{f}\in {L^\infty_\Dx}(\R^d)$ we define the
(vector-valued) difference operators $\nabla_+$, 
$\nabla_-$, defined component-wise as
\begin{equation}\label{eq:difference_operators}
\begin{gathered}
	\nabla_+^j f_\alpha \coloneqq
	\frac{f_{\alpha + e_j} - f_{\alpha}}{\Delta x},
	\qquad
	\nabla_-^j f_\alpha \coloneqq
	\frac{f_{\alpha} - f_{\alpha-e_j}}{\Delta x}.
\end{gathered}
\end{equation}
for $\alpha\in\Z^d$ and $j=1,\dots,d$. Throughout this
paper we will write $(\nabla_+ f)_\alpha
= \nabla_+ f_\alpha$,
$(\nabla_- f)_\alpha = \nabla_- f_\alpha$.

For discrete functions of two variables
$F=F_{\alpha,\beta}$, $G=G_{\alpha,\beta}$ we
define the discrete convolution
\begin{equation*}
	\bigl(\bm{F}\dconv\bm{G}\bigr)_{\alpha,\beta}
	\coloneqq \sum_{\eta\in\Z^d}
	F_{\alpha,\eta}G_{\eta,\beta}\,\Dx^d.
\end{equation*}

The more familiar variants for functions
of one spatial variable, $f=f_\alpha$ or $f=f_\alpha(t)$,
are obtained by setting
$F_{\alpha,\beta} =f_{\alpha-\beta}$
and $G_{\alpha,\beta}=g_\alpha$, leading to
\begin{equation*}
	(\bm{f}\dconv\bm{g})_\alpha
	= \sum_{\eta\in\Z^d} f_{\alpha-\eta}g_\eta\,\Dx^d.
\end{equation*}
We have the following variant of
Young's convolution inequality
\cite[Lemma 3.1]{Fjordholm:2023aa}.

\begin{lemma}\label{lem:young}
Let $p_1,p_2,q_1,q_2\in[1,\infty]$ satisfy
$\frac{1}{p_2}+\frac{1}{q_1} = 1$.
If $\bm{F}\in L^{p_1}_\alpha L^{p_2}_\beta$
and $\bm{G}\in L^{q_1}_\alpha L^{q_2}_\beta$ then
$$
\|\bm{F}\dconv\bm{G}\|_{L^{p_1}_\alpha L^{q_2}_\beta}
\leq \|\bm{F}\|_{L^{p_1}_\alpha L^{p_2}_\beta}
\|\bm{G}\|_{L^{q_1}_\alpha L^{q_2}_\beta}.
$$
\end{lemma}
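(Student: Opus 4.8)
The plan is to reduce the mixed-norm inequality to the one-variable Young inequality by viewing the discrete convolution $\bm F\dconv\bm G$ as an $\alpha$-indexed superposition of $\beta$-variable convolutions. First I would fix $\alpha$ and look at the inner layer: for each $\eta\in\Z^d$, the map $\beta\mapsto F_{\alpha,\eta}G_{\eta,\beta}$ is (a scalar multiple of) a $\beta$-sequence, and summing $\eta$ against the weight $\Dx^d$ is exactly the discrete convolution structure in disguise once one recognises that the $\eta$-sum couples the inner index of $F$ with the outer index of $G$. The cleanest route is: by the triangle inequality in $\ell^{q_2}(\Dx^d)$ (with respect to $\beta$),
\begin{equation*}
	\bigl\|(\bm F\dconv\bm G)_{\alpha,\cdot}\bigr\|_{\ell^{q_2}_\beta}
	\le \sum_{\eta\in\Z^d} \abs{F_{\alpha,\eta}}\,
	\bigl\|G_{\eta,\cdot}\bigr\|_{\ell^{q_2}_\beta}\,\Dx^d.
\end{equation*}
Now set $a_\eta\coloneqq\abs{F_{\alpha,\eta}}$ (as a sequence in $\eta$, for fixed $\alpha$) and $b_\eta\coloneqq\bigl\|G_{\eta,\cdot}\bigr\|_{\ell^{q_2}_\beta}$; the right-hand side is $\sum_\eta a_\eta b_\eta\,\Dx^d$, a single-variable discrete pairing.

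The second step is to take the $\ell^{p_1}(\Dx^d)$-norm in $\alpha$ of this bound and apply the one-variable discrete Young inequality. The subtlety is that $a_\eta=\abs{F_{\alpha,\eta}}$ still depends on $\alpha$, so what we actually have is a convolution-type expression $H_\alpha\coloneqq\sum_\eta \abs{F_{\alpha,\eta}}\,b_\eta\,\Dx^d$. To apply the classical Young inequality one wants the kernel to depend only on $\alpha-\eta$; here it does not, but the hypothesis $\frac1{p_2}+\frac1{q_1}=1$ is precisely the Hölder exponent pairing that lets us bound this bilinear $\eta$-sum by $\bigl\|\abs{F_{\alpha,\cdot}}\bigr\|_{\ell^{p_2}_\eta}\,\bigl\|b\bigr\|_{\ell^{q_1}_\eta}$ via Hölder in $\eta$ (with the $\Dx^d$ weight absorbed consistently on both factors). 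Then
\begin{equation*}
	H_\alpha \le \bigl\|F_{\alpha,\cdot}\bigr\|_{\ell^{p_2}_\eta}\,
	\bigl\|G_{\cdot,\cdot}\bigr\|_{\ell^{q_1}_\eta \ell^{q_2}_\beta},
\end{equation*}
and taking the $\ell^{p_1}(\Dx^d)$-norm in $\alpha$ of the first factor (the second factor is a constant in $\alpha$) yields exactly $\|\bm F\|_{L^{p_1}_\alpha L^{p_2}_\beta}\|\bm G\|_{L^{q_1}_\alpha L^{q_2}_\beta}$, which is the claim. Throughout one must be careful that $\|\bm F\|_{L^{p}(\R^d)}$ is the $\ell^p(\Dx^d)$-norm, so every $\ell^p$ step carries the grid-volume weight $\Dx^d$; since the Hölder conjugacy $\frac1{p_2}+\frac1{q_1}=1$ matches, the powers of $\Dx^d$ balance and no stray grid factors survive.

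I expect the main obstacle to be purely bookkeeping rather than conceptual: keeping track of which index the $\Dx^d$ weights attach to at each application of Hölder and the triangle inequality, and handling the endpoint cases $p_2=\infty$ or $q_1=\infty$ (where one of the Hölder factors degenerates to a supremum) so that the inequality still reads correctly. One also has to observe that the case $p_1=\infty$ or $q_2=\infty$ requires replacing the $\ell^{p_1}_\alpha$ or $\ell^{q_2}_\beta$ norm by the corresponding supremum, but the triangle inequality and Hölder both hold in those limits, so no separate argument is needed. Since this is essentially the proof of \cite[Lemma 3.1]{Fjordholm:2023aa}, I would simply cite that reference and sketch the two displayed steps above for completeness.
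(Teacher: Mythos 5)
Your argument is correct: the Minkowski-in-$\beta$ followed by H\"older-in-$\eta$ (with the conjugacy $\tfrac1{p_2}+\tfrac1{q_1}=1$ splitting the $\Dx^d$ weight as $(\Dx^d)^{1/p_2}(\Dx^d)^{1/q_1}$) is exactly the standard route, and your handling of the endpoint exponents is fine. The paper itself gives no proof of this lemma — it simply cites \cite[Lemma 3.1]{Fjordholm:2023aa} — so your sketch supplies precisely the argument the paper defers to, and nothing more needs to be said.
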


\section{Constant coefficients}
\label{sec:greens_functions}

In this section, we will consider the following semi-discrete
(anisotropic) heat equation with constant coefficients
$c^1,\ldots,c^d>0$:
\begin{equation}\label{eq:discrete_heat_const}
	\frac{\d}{\d t}  u_\alpha
	= \sum_{j=1}^d c^j \nabla^j_- \nabla^j_+  u_\alpha.
\end{equation}
We will provide refined a priori estimates for
the fundamental solution of \eqref{eq:discrete_heat_const}.
Additionally, we will present
the solution formula for the nonhomogeneous version
of \eqref{eq:discrete_heat_const} with a source
vector $\bm{f}$, along with $L^\infty$ estimates for
the solution and its (discrete) derivative.

The solution to the semi-discrete heat equation
\eqref{eq:discrete_heat_const} can be expressed
as the convolution $\bm{u}(t)=\bm{u}(0)\dconv\bm{a}(t)$, where
$\bm{a}$ is the fundamental solution.
Letting $\bm{\delta}\in L^1_\Dx(\R^d)$
denote the discrete Dirac measure:
$$
\delta_\alpha \coloneqq
\Dx^{-d}\one{\alpha=0}
=
\begin{cases}
	\Dx^{-d} & \text{if $\alpha=0$},
	\\
	0 & \text{if $\alpha\neq 0$},
\end{cases}
$$
we define $\bm{a}(t)=\{a_\alpha(t)\}_{\alpha\in \Z^d}$
as the solution to the ODE system
\begin{align}\label{eq:discheateq}
	\frac{\d }{\d t} a_\alpha(t)
	= \nabla_+ \cdot \bigl(c\nabla_- a_\alpha(t)\bigr),
	\qquad a_\alpha(0) = {\delta}_\alpha,
	\qquad t>0, \,\, \alpha\in\Z^d,
\end{align}
where $c=\diag(c^1,\ldots,c^d)$.
One formal representation of $a_\alpha(t)$
can be given by
\begin{equation}\label{eq:greensfunction}
	a_\alpha(t) = \sum_{i=0}^\infty
	\frac{t^i}{i!} \bigl(\nabla_+
	\cdot c\nabla_-\bigr)^i{\delta}_\alpha.
\end{equation}

Before proceeding, and for future reference, let us
recall that there is an explicit
solution formula for the following
one-dimensional problem ($d=1$, $c^1=1$, $\Dx=1$):
\begin{equation}\label{eq:sd-heat-1D}
	\frac{\d}{\d t}  a_n(t)
	=a_{n-1}-2a_n+a_{n+1},
	\quad
	a_n(0)= \one{n = 0},
	\quad t>0,\,\, n\in \Z.
\end{equation}
The solution formula is
\begin{equation}\label{eq:sol-1D}
	a_n(t) = e^{-2t} I_n(2t),
\end{equation}
where $I_n$ is the modified Bessel function of
the first kind and of order $n$. For $n\in \Z$
and $r\ge 0$, $I_n(r)$ takes the form
(see, e.g., \cite[(9.5.2)]{BW2016})
\begin{equation}\label{eq:Bessel-1D-tmp1}
	I_n(r)=\sum_{k=0}^\infty \frac{1}{k!(k+n)!}
	\left(\frac{r}{2}\right)^{2k+n},
	\quad
	n\in \N_0,
	\qquad
	I_{-n}(r)= I_n(r),
	\quad n\in \N.
\end{equation}
This function satisfies $I_n(0)= \one{n=0}$.
Alternatively, $I_n(r)$ can be
represented by the integral
\begin{equation*}
	I_n(r)=\frac{1}{\pi}\int_0^\pi
	e^{r\cos\theta}\cos(n\theta)\,d\theta.
\end{equation*}
This solution formula \eqref{eq:sol-1D}
is widely known. For relevant
references and a list of important properties
that can be derived from it, see the recent
papers \cite{AD2022,AGMP2021,ADM2024,CGRTV2017,Ignat:2006aa}.
We will recall some of these properties for later use.

The solution operators
$\bar{a}=\{\bar{a}_n\}_{n\in \Z}
\mapsto \mathcal{S}(t)\bar{a}(n)
\coloneqq\sum_{m\in\mathbb{Z}} a_{n-m}(t)\, \bar{a}_m$ forms
a strongly continuous one-parameter semigroup.
These operators are contractive with respect to the
$\ell^p$ norms (including $\ell^1$, $\ell^2$, and $\ell^\infty$),
preserve nonnegativity, and leave the constant
function unchanged (the Markov property).
This structure ensures a unique solution to the
semi-discrete heat equation on $\Z$ with initial
data $\bar{a}\in \ell^\infty$, with continuous time
dependence and stability
relative to the initial data.
For details, see \cite{CGRTV2017} (and
Proposition \ref{lem:green_estimates} below).

\subsection{The constant coefficient fundamental solution near $t = 0$}

We establish new bounds for the 
fundamental solution of the constant-coefficient 
equation \eqref{eq:discrete_heat_const}. A key 
feature of these bounds is their stability under 
spatio-temporal convolutions, which makes them 
well suited for applications based on a coefficient-freezing 
strategy. In particular, they will be used in 
subsequent sections to construct a parametrix 
and to derive pointwise estimates in Section 
\ref{sec:aux_calc} for the fundamental solution 
of the variable-coefficient equation 
\eqref{eq:heat_variable_coeff}. The analysis is 
based on the representation of the fundamental 
solution in terms of modified Bessel functions  
\cite{Hoff:1985zm}, together with the asymptotic 
results proved in \cite{AD2022,AGMP2021,CGRTV2017}.

Let $R=\frac{|n|^2}{t}$ with $n \in \Z$ and $t > 0$.
Below, we distinguish between two cases: $R\leq 1$ and
$R\geq 1$. As a convention, for $n=0$, $R\leq 1$
is interpreted as $t>0$, while the case $R\geq 1$
is considered void. Then there exists a constant $C > 0$
(independent of $n$ and $t$) such that
\begin{equation} \label{eq:a_n_bound}
	|a_n(t)| \leq
	\begin{cases}
		\frac{C}{t^{1/2}} & \text{if } R \leq 1, \\
		\frac{C t}{|n|^3} & \text{if } R \geq 1.
	\end{cases}
\end{equation}
If $\nabla^+ a_n(t)
=a_{n+1}(t)-a_n(t)$, then
\begin{equation} \label{eq:first_diff}
	|\nabla^+a_n(t)| \leq
	\begin{cases}
		\frac{C}{t^{1/2}} & \text{if } R \leq 1, \\
		\frac{C t}{|n|^4} & \text{if } R \geq 1.
	\end{cases}
\end{equation}
For the second order difference
$\nabla^+\nabla^- a_n(t)
= a_{n-1}(t) - 2a_n(t) + a_{n+1}(t)$,
\begin{equation} \label{eq:second_diff}
	|\nabla^+\nabla^- a_n(t)| \leq
	\begin{cases}
		\frac{C}{t^{3/2}} & \text{if } R \leq 1, \\
		\frac{C}{|n|^3} & \text{if } R \geq 1.
	\end{cases}
\end{equation}
For proofs of these estimates, see
\cite[Lemma 4.1 and Remark 4.2]{AGMP2021}.
Furthermore, in \cite[Lemma 2.3]{AD2022}, it
is proven that, for $m\in \N$,
$n\in \Z$, and $t>0$,
\begin{equation}\label{eq:time-decay-tmp}
	|(\nabla^{+})^m a_n(t)|
	\leq \frac{C_n}{t^{\lfloor (m+1)/2 \rfloor+1/2}}.
\end{equation}
According to \cite[Lemma 2.4]{AD2022},
for $m \in \N$, $n \in \N_0$ (nonnegative),
and $t>0$, we have
\begin{align*}
	|(\nabla^{+})^m a_n(t)|
	\leq&\ \frac{C_m}{t^{m/2}}\sum_{u=0}^{\lfloor m/2 \rfloor}
	\left( \frac{(n+1/2)^2}{t}\right)^{m/2-u}a_{n+m-2u}(t)
	\\
	&
	+C_m a_n(t)\!\!\sum_{u=\lfloor m/2 \rfloor+1}^{m-1}
	\frac{1}{t^u},
\end{align*}
where $C_m$ is a positive constant which
is independent on $t$ and $n$.
For $m=1$, the above estimate reads
\begin{equation}\label{eq:nabla-plus-an-tmp}
	|\nabla^{+}a_n(t)|
	\le \frac{C_1(n+1/2)}{t}a_{n+1}(t).
\end{equation}
while, for $m=2$ and $(\nabla^{+})^2 a_n(t)
=a_n(t)-2a_{n+1}+a_{n+2}(t)$,
\begin{equation}\label{eq:nabla-plus-squared-an-tmp}
	|(\nabla^{+})^2 a_n(t)|
	\le \frac{C_2(n+1/2)^2}{t^{2}}a_{n+2}(t)
	+\frac{C_2}{t}a_n(t).
\end{equation}

We will require the estimates
\eqref{eq:nabla-plus-an-tmp} and
\eqref{eq:nabla-plus-squared-an-tmp}
later, specifically
in the ``large time" regime
$$
C\sqrt{t} \ge n, \quad \text{for some constant $C>0$}.
$$
First, let us consider $\nabla^{+} a_n(t)$ where
$C\sqrt{t}\ge n\ge 1$, in which case
$n+1/2 \lesssim \sqrt{t}$, so that
$\frac{n+1/2}{t} \lesssim \frac{1}{t^{1/2}}$.
Hence, since ${a_n(t)}$ is decreasing in $n$,
\eqref{eq:nabla-plus-an-tmp} implies
\begin{equation}\label{eq:nabla-plus-an}
	|\nabla^+a_n(t)|
	\lesssim \frac{1}{t^{1/2}}a_n(t)
	\qquad \text{for $C\sqrt{t}\ge n \ge 1$}.
\end{equation}
Similarly, let us consider the second order difference
operator $(\nabla^{+})^2 a_n(t)$ in
the same regime. Moreover, because
$1\le n\le C\sqrt{t}$, it follows that
$(n+1/2)^2\lesssim t$. Therefore,
since $a_{n+2}(t)\le a_n(t)$, the
first term in \eqref{eq:nabla-plus-squared-an-tmp}
satisfies
$$
\frac{C_2(n+1/2)^2}{t^{2}}a_{n+2}(t)
\lesssim\frac{1}{t}a_n(t),
$$
and hence,
\begin{equation}\label{eq:nabla-plus-squared-an}
	| (\nabla^{+})^2 a_n(t)|
	\lesssim \frac{1}{t}a_n(t)
	\qquad \text{for $C\sqrt{t}\ge n \ge 1$}.
\end{equation}

For negative values of $n$, given that
$a_{-n}(t) = a_n(t)$, the bounds
remain similar but not identical. Indeed,
for $C\sqrt{t}\ge n \ge 2$, we have
\begin{equation}\label{eq:nabla-neg-an}
	\begin{split}
		|\nabla^+a_{-n}(t)|&=|a_{-n+1}(t)-a_{-n}(t) |
		=|a_{n-1}(t)-a_{n}(t)|
		\\ &
		\overset{\mathclap{\eqref{eq:nabla-plus-an-tmp}}}{\lesssim}
		\frac{1}{\sqrt t}a_{n}(t)
		=\frac{1}{\sqrt t}a_{-n}(t),
	\end{split}
\end{equation}
and, for $C\sqrt{t}\ge n \ge 3$,
\begin{equation}\label{eq:nabla-neg-squared-an}
\begin{split}
	|(\nabla^{+})^2 a_{-n}(t)|
	&= |a_{-n}(t)-2a_{-n+1}(t)+a_{-n+2}(t)| \\
	&=|a_{n}(t)-2a_{n-1}(t)+a_{n-2}(t)|
	\overset{\eqref{eq:nabla-plus-squared-an}}{\lesssim}
	\frac{1}{t}a_{n-2}(t)
	=\frac{1}{t}a_{-n+2}(t).
\end{split}
\end{equation}

For later reference, note that for $n=0$,
\eqref{eq:nabla-plus-an-tmp} yields
\begin{equation}\label{eq:nabla-plus-a0}
	|\nabla^{+}a_0(t)|
	\lesssim \frac{1}{t}a_{1}(t)
	\le \frac{1}{t}a_{0}(t)
	\qquad \text{for }  t>0.
\end{equation}

Let us now consider the one-dimensional semi-discrete heat
equation with a constant diffusion coefficient $c>0$
and a grid parameter $\Dx>0$:
\begin{equation}\label{eq:sd-heat-c-1D}
	\frac{\d}{\d t}  a_n^{(c)}(t)
	=c\frac{a_{n-1}^{(c)}-2a_n^{(c)}+a_{n+1}^{(c)}}{\Dx^2},
	\qquad
	a_n^{(c)}(0)=\Dx^{-1}\one{n = 0}.
\end{equation}
Rewriting the equation in terms of
the new temporal variable $t\mapsto \frac{c}{\Dx^2}t$
yields \eqref{eq:sd-heat-1D}, whose solution is
provided in \eqref{eq:sol-1D}.
Transforming back to the original variable results in the
following solution formula, incorporating the initial data:
\begin{equation}\label{eq:sol-1D-c}
	a_n^{(c)}(t) = \Dx^{-1} e^{-r} I_n(r)
	=\Dx^{-1}a_n(r/2),
\end{equation}
where $r=\frac{2ct}{\Dx^2}$, and $a_n$ is the solution
of \eqref{eq:sol-1D}.
This suggests that the estimates obtained for
\eqref{eq:sd-heat-1D} can be readily applied
to the solution of \eqref{eq:sd-heat-c-1D}.

In the next proposition, we offer our
first refined estimates for the
(fundamental) solution of
the multi-dimensional problem
\eqref{eq:discrete_heat_const}. 
Whilst bearing a passing 
semblance to \cite[Lemma 4.1]{Fjordholm:2023aa}, 
the core estimate \eqref{eq:pointwise_da} is more accurate 
around $t = 0$, and requires a more involved proof.
Of particular significance, we identify a
region near $\alpha^j = 0$ and $t = 0$,
where a temporal factor with negative power would
otherwise mask much better behaviour (see
\eqref{eq:pointwise_da} below).
This observation will be crucial for our
calculations for a pointwise bound in Section \ref{sec:aux_calc}.

\begin{proposition}[Fundamental solution]
\label{lem:green_estimates}
The series representation \eqref{eq:greensfunction}
for the solution $\bm{a}(t)
=\{a_\alpha(t)\}_{\alpha\in \Z^d}$
of \eqref{eq:discheateq}
converges for all $t\in \R$, uniformly in
$t$ for $t\in [0,T]$, and the limit belongs to
$C([0,T];L^1_\Dx(\R^d))$ for any $T>0$.
Moreover,
\begin{enumerate}[label=(\roman*)]
	\item $a_\alpha(\cdot)\in C^1((0,\infty))$
	has the following representation
	for $\alpha\in\Z^d$ and $t>0$:
	\begin{equation}\label{eq:fundamentalsolution}
    	a_\alpha(t) = \frac{1}{\Dx^d}
    	\prod_{j=1}^d e^{-r^j} I_{\alpha^j}(r^j), \qquad
    	r^j \coloneqq\frac{2c^jt}{\Dx^2}.
   \end{equation}
   Here, the function $I_{n}(r)$ is defined
   in \eqref{eq:Bessel-1D-tmp1}
   for any $n\in \Z$ and $r \ge 0$.

	\item\label{lem:green_estimates_positivity}
	$a_{\alpha}(t) > 0$ for all $\alpha\in\Z^d$ and $t>0$,
	$\sum_{\alpha \in \Z^d} a_\alpha(t) \Dx^d = 1$
	for all $t\geq0$, and
	$\|\bm{a}(t)\|_{L^p(\R^d)}
	\leq \|\bm{a}(0)\|_{L^p(\R^d)} < \infty$
	for all $t>0$, $p\in [1,\infty]$.

	\item\label{lem:green_estimates_derivative-bound}
	For $\ell=1,\dots,d$ and $m = 0,1,2$, recall the definition
	of $\nabla_+^\ell$ in \eqref{eq:difference_operators}, and
	let $(\nabla_+^\ell)^m$ denote $m$
	successive applications of $\nabla_+^\ell$.
	For $\alpha\in \Z^d$, define
	$Z(\alpha) = \# \{j \in \{1,\ldots,d\}
	\mid \alpha^j = 0 \}$. Then
	\begin{equation}\label{eq:pointwise_da}
		\begin{aligned}
			\abs{(\nabla_+^\ell)^m a_\alpha(t)}
			\lesssim\ &\biggl(\frac1{\sqrt{2\overline{c}}}
			\wedge \frac{t^{\hf}}{\Dx}\biggr)^{Z(\alpha)} \\
	   		&\times t^{-\bk{d+m}/2}\prod_{j = 1}^d
			\bk{1+\frac{\abs{x_\alpha^j}^2}{2\overline{c}t}
			+\frac{\abs{x_\alpha^j}^3}{\abs{2\overline{c}t}^{3/2}}}^{-1},
		\end{aligned}
	\end{equation}
	where $\overline{c}\coloneqq \max(c^1,\dots,c^d)$.
	Furthermore, for $p\geq 1$,
	\begin{equation}\label{eq:a_lp}
		\bigl\|(\nabla_+^\ell)^m\bm{a}(t)\bigr\|_{L^p (\R^d)}
		\leq C_{\ell,m,p}t^{d/(2p)-(d+m)/2},
	\end{equation}
	noting that $t^{d/(2p)-(d+m)/2}$ can also be expressed
	as $t^{d/(2p')-m/2}$, where $p'$ is
	the dual exponent of $p$.
\end{enumerate}
\end{proposition}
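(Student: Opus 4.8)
The plan is to reduce everything to the one-dimensional formula \eqref{eq:sol-1D-c} via the product structure \eqref{eq:fundamentalsolution}, and then to interpolate between the small-time "Lorentzian" estimates and the large-time Gaussian-type estimates already recorded in \eqref{eq:a_n_bound}--\eqref{eq:nabla-plus-a0}. First I would establish convergence of the series \eqref{eq:greensfunction}: since $\nabla_+\cdot c\nabla_-$ is a bounded operator on $\ell^1_\Dx$ (indeed $\|\nabla_+\cdot c\nabla_-\|_{\ell^1\to\ell^1}\le 4\overline c/\Dx^2$), the exponential series converges absolutely in $L^1_\Dx(\R^d)$ uniformly on $[0,T]$, giving the stated continuity and identifying the limit with the semigroup $e^{t(\nabla_+\cdot c\nabla_-)}\bm\delta$. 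Differentiating term by term yields the $C^1$ claim and the ODE. For part (i), one verifies \eqref{eq:fundamentalsolution} directly: the tensor product $\prod_j \Dx^{-1}e^{-r^j}I_{\alpha^j}(r^j)$ solves \eqref{eq:discheateq} because each factor solves the one-dimensional equation \eqref{eq:sd-heat-c-1D} (this is \eqref{eq:sol-1D-c} after the time rescaling $t\mapsto (c^j/\Dx^2)t$), the Leibniz rule for the discrete Laplacian splits across coordinates, and the initial condition factors as $\bm\delta_\alpha=\prod_j\Dx^{-1}\one{\alpha^j=0}$; uniqueness from the semigroup argument then forces equality.

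Part (ii) is essentially classical: positivity of $I_n(r)$ for $r>0$ (clear from the power series \eqref{eq:Bessel-1D-tmp1}) gives $a_\alpha(t)>0$; the mass identity $\sum_\alpha a_\alpha(t)\Dx^d=1$ follows from the generating-function identity $\sum_{n\in\Z}I_n(r)=e^{r}$ applied coordinatewise, or equivalently from the conservation law obtained by summing \eqref{eq:discheateq}; the $L^p$ contractivity is the Markov/sub-Markov property of the semigroup, which one gets from positivity plus the mass identity via Riesz--Thorin interpolating between the $L^1$ bound (mass conservation) and the $L^\infty$ bound (maximum principle, or $\|\bm a(t)\|_{L^\infty}\le\|\bm a(t)\|_{L^1}\cdot\Dx^{-d}$ together with positivity).

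The heart of the matter is part (iii), the pointwise bound \eqref{eq:pointwise_da}. By \eqref{eq:fundamentalsolution} and the fact that $(\nabla_+^\ell)^m$ acts only in the $\ell$-th slot, it suffices to prove the one-dimensional statement: for $m=0,1,2$ and $n\in\Z$,
\begin{equation*}
	\abs{(\nabla_+)^m\!\left(\Dx^{-1}a_n(r/2)\right)}
	\lesssim \Bigl(\tfrac1{\sqrt{2\overline c}}\wedge\tfrac{t^{1/2}}{\Dx}\Bigr)^{\one{n=0}}
	t^{-(1+m)/2}\Bigl(1+\tfrac{\abs{x_n}^2}{2\overline c t}+\tfrac{\abs{x_n}^3}{\abs{2\overline c t}^{3/2}}\Bigr)^{-1},
\end{equation*}
with $r=2c t/\Dx^2$, and then multiply the $d$ factors. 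One splits into the regimes $R=\abs n^2/r\le 1$ and $R\ge 1$ of \eqref{eq:a_n_bound}--\eqref{eq:second_diff} (translated from $t$ to $r$ via \eqref{eq:sol-1D-c}, noting $\nabla_+$ scales by $\Dx^{-1}$ and each such scaling produces the extra $\Dx^{-1}\sim t^{-1/2}(t^{1/2}/\Dx)$ that, on the set $4\overline c t<\Dx^2$, is the factor $t^{1/2}/\Dx\le 1$ responsible for the improvement near the origin). In the regime $R\le 1$ the bounds \eqref{eq:first_diff}--\eqref{eq:second_diff} give $t^{-(1+m)/2}$ up to $\Dx$-powers, while the denominator is $O(1)$; in the regime $R\ge 1$ one uses the polynomial-decay bounds $Ct/\abs n^3$, $Ct/\abs n^4$, $C/\abs n^3$, rewrites $\abs n = \abs{x_n}/\Dx$ and $r = 2ct/\Dx^2$, and checks that $\abs n^{-3}$ and the like dominate the $\bigl(1+\abs{x_n}^2/t+\abs{x_n}^3/t^{3/2}\bigr)^{-1}$ factor (the cubic tail is exactly what matches $\abs n^{-3}$, which is why the cubic term appears). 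The special factor at $n=0$ is obtained separately from \eqref{eq:nabla-plus-a0} and from the uniform bound $a_0(t)=e^{-2t}I_0(2t)\lesssim (1+t)^{-1/2}$, combined with $\Dx$-scaling; one takes the minimum of the two available bounds. Finally \eqref{eq:a_lp} follows by inserting \eqref{eq:pointwise_da} into the $\ell^p$ sum: the product $\prod_j(1+\abs{x_\alpha^j}^2/t+\cdots)^{-1}$ is summable against $\Dx^d$ uniformly in $\Dx$ (it is dominated by a Riemann sum for $\int_{\R^d}\prod_j(1+\abs{y^j}^2/t)^{-1}\,dy \sim t^{d/2}$), which converts the prefactor $t^{-(d+m)/2}$ into $t^{d/(2p)-(d+m)/2}$; the $n=0$ factor only affects a single term and is harmless.

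The main obstacle I anticipate is bookkeeping the interaction of the two regimes with the $\Dx$-scaling so that the claimed uniform-in-$\Dx$ bound near $(t,\alpha)=(0,0)$ actually emerges — in particular, verifying that the discrete-derivative scaling factors $\Dx^{-1}$ combine correctly with the $t$-powers coming from \eqref{eq:first_diff}--\eqref{eq:second_diff} to yield precisely $(t^{1/2}/\Dx)^{Z(\alpha)} t^{-(d+m)/2}$ rather than something worse, and checking that the transition at $R=1$ (equivalently $4\overline c t\sim\Dx^2$, up to constants absorbing $c^j\le\overline c$) is seamless in all three cases $m=0,1,2$. This is where the cubic term $\abs{x_\alpha^j}^3/\abs{2\overline c t}^{3/2}$ in the denominator is essential and must be tracked carefully, since it is the feature of \eqref{eq:pointwise_da} that goes beyond the cited one-dimensional results and is needed in Section~\ref{sec:aux_calc}.
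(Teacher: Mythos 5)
Your overall architecture---reduce to $d=1$ via the tensor product \eqref{eq:fundamentalsolution}, split into the regimes $R\le 1$ and $R\ge 1$, and track the $\Dx^{-m}$ scaling of the discrete derivatives---is the same as the paper's, and your handling of parts (i)--(ii), of the regime $R\ge 1$, of the $n=0$ factor, and of \eqref{eq:a_lp} is sound. There is, however, a genuine gap in the case $m=1$, $n\neq 0$, in the regime $R\le 1$ (i.e.\ $|x_n|\le\sqrt{2\overline{c}t}$). You propose to use \eqref{eq:first_diff} there, but its $R\le1$ branch gives only $|\nabla^+a_n(t)|\le Ct^{-1/2}$; after the $\Dx^{-1}$ scaling of the discrete derivative and the time rescaling $t\mapsto r/2$ this yields $|\nabla_+ a^{(c)}_n(t)|\lesssim \Dx^{-1}t^{-1/2}$, whereas the target \eqref{eq:pointwise_da} demands $t^{-1}$ (the Lorentzian factor being of order one in this regime). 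The discrepancy is the factor $t^{1/2}/\Dx$, which is unbounded for $t\gg\Dx^2$, and for $n\neq0$ there is no $(t^{1/2}/\Dx)^{Z(\alpha)}$ factor available to absorb it. The exponents happen to match for $m=0$ and $m=2$ (where \eqref{eq:a_n_bound} and \eqref{eq:second_diff} give exactly $t^{-1/2}$ and $t^{-3/2}$ after rescaling), which is why the mismatch at $m=1$ is easy to overlook.

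The paper closes exactly this gap with the multiplicative estimates \eqref{eq:nabla-plus-an-tmp}--\eqref{eq:nabla-plus-squared-an-tmp} from \cite{AD2022}, whose consequences \eqref{eq:nabla-plus-an} and \eqref{eq:nabla-neg-an} read $|\nabla^+a_n(t)|\lesssim t^{-1/2}a_n(t)$ for $1\le n\le C\sqrt t$: the extra factor $a_n(t)\lesssim t^{-1/2}$ supplies the missing half power of $t$. You would need to import these (or an equivalent cancellation estimate for the first difference in the bulk regime) to make your argument go through. A secondary omission: the \cite{AD2022} bounds are stated for $n\ge0$, so the indices $n=-1$ (for $m=1$) and $n=-1,-2$ (for $m=2$) in the large-time regime require the separate treatment via \eqref{eq:time-decay-tmp} together with the comparison $\Xi(0,r)\lesssim\Xi(l,r)$ that the paper carries out; your sketch only singles out $n=0$.
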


\begin{remark}\label{rem:gaussian_pointwise}
In Proposition \ref{thm:a_pointwise_exponential},
we show that Gaussian pointwise estimates are
valid for $a_\alpha(t)$ when
$t \gtrsim_{c} \max\limits_{j=1,\ldots,d}
\abs{\alpha^j} \Dx^2$. 
\end{remark}

\begin{proof}
Properties (i) and (ii), along with the
convergence of the series \eqref{eq:greensfunction},
were established in \cite[Propositions 1 and 2]{CGRTV2017}
(see also \cite[Lemma 4.1]{Fjordholm:2023aa}).

Here, we establish the bound \eqref{eq:pointwise_da}.
For any $\alpha =\{\alpha^j\}_{j=1}^d\in \Z^d$
and $t>0$, recalling the definition of $r^j$
(see \eqref{eq:fundamentalsolution}), the notation
\begin{equation}\label{eq:a-alphaj-cj}
	a_{\alpha^j}^{(c^j)}(t)
	=\Dx^{-1}a_{\alpha^j}(r^j/2)=
	\Dx^{-1}e^{-r^j} I_n(r^j)
\end{equation}
refers to the solution formula for the one-dimensional
semi-discrete heat equation with constant
diffusion coefficient $c^j$
and grid spacing $\Dx$, see \eqref{eq:sd-heat-c-1D}
and \eqref{eq:sol-1D-c}. Using this, we
decompose the multi-dimensional solution
formula \eqref{eq:fundamentalsolution} as
\begin{equation}
	\label{eq:fund_final_expression}
	a_\alpha(t)= \prod_{j=1}^d a_{\alpha^j}^{(c^j)}(t)
	=\prod_{j=1}^d\Xi(\alpha^j,r^j)\Psi(\alpha^j,r^j),
\end{equation}
where, for any $n\in \Z$ and $r>0$,
\begin{align}
	\Xi(n,r) &\coloneqq
	\frac{1}{\sqrt{r \Dx^2}}
	\bk{1+\frac{n^2}{r}
	+\frac{\abs{n}^3}{\abs{r}^{{3/2}}}}^{-1},
	\label{eq:Xinr-def}
	\\
	\Psi(n,r) & \coloneqq
	\frac{e^{-r}I_n(r)}{\Dx\,\Xi(n,r)}
	= r^{1/2}\bk{1+\frac{n^2}{r}
	+\frac{\abs{n}^3}{\abs{r}^{{3/2}}}}
	e^{-r} I_n(r) \lesssim 1.
	\label{eq:Psinr-def}
\end{align}
We refer to \cite[Lemma 4.1]{Fjordholm:2023aa}
for the uniform bound on $\Psi$, which is
based on \eqref{eq:a_n_bound}.

We will partition $\Z \times [0,\infty)$
into two sets $F$ and $F^c$ to obtain a
sharper estimate on $a_\alpha$
in each direction. Let
\begin{equation}\label{eq:F-def}
	F \coloneqq \{(n,r) \in
	\Z\times [0,\infty): n \neq 0\}
	\cup \{(0,r)\in \Z\times [0,\infty): r \ge 1\},
\end{equation}
so that its complement is
\begin{equation}\label{eq:Fc-def}
	F^{\tt c} =\{(0,r)
	\in \Z \times [0,\infty): r < 1\}.
\end{equation}
A related decomposition appears
in \cite[page 20]{AGMP2021}.
This allows us to split
\eqref{eq:fund_final_expression}
into two parts as follows:
\begin{equation}\label{eq:a-split-tmp1}
	a_\alpha(t)
	=\prod_{j=1}^d
	\Bigl(
	\Xi(\alpha^j,r^j)
	\Psi(\alpha^j,r^j)\one{F}(\alpha^j,r)
	+ a_{\alpha^j}^{(c^j)}(t)
	\one{F^{\tt c}}(\alpha^j,r)
	\Bigr),
\end{equation}
where the variable $r$ within the
indicator functions is defined by
\begin{equation}\label{eq:def-r}
	r\coloneqq\max_{j=1,\ldots,d}r^j
	=2\overline{c} t/\Dx^2, \qquad
	\overline{c} \coloneqq \max_{j=1,\ldots,d} c^j.
\end{equation}

Regarding the second term on the
right-hand side of \eqref{eq:a-split-tmp1},
it is evident that for
$(n,r) \in F^{\tt c} \Longleftrightarrow \left(n=0, \, \,
\sqrt{2 \overline{c} t}<\Dx\right)$, the inequality
\[
a_0^{(c^j)}(r^j/2)=\Dx^{-1}e^{-r^j} I_0(r^j) \lesssim \Xi(0,r^j)
= \frac1{\sqrt{r^j \Dx^2}}=\frac{1}{\sqrt{2c^jt}}
\]
(which follows from \eqref{eq:Psinr-def})
becomes less significant when compared
to the contraction estimate
$a_0^{(c^j)}(r^j)\le a_0^{(c^j)}(0)
\le \frac{1}{\Dx}$, recognising that
$$
\frac{1}{\sqrt{2c^jt}}
\ge \frac{1}{\sqrt{2\overline{c}t}}
>\frac{1}{\Dx}.
$$
Whilst a uniform $\frac{1}{\sqrt{t}}$ bound
at small times holds for any $n$, it
is necessary to ensure decay as
$\abs{n} \to \infty$, even when $t$ is close to $0$
This is what we proceed with next,
keeping in mind that Gaussian estimates are not
available in the semi-discrete setting for small times.

From the previous discussion and
\eqref{eq:a-split-tmp1},
we see there exists a constant
$C >0$ depending only on $d$
and $c=\diag(c^1,\ldots,c^d)$ such that
\begin{equation}\label{eq:a_F_bound1}
	\begin{aligned}
 		a_\alpha(t)
		\le C \prod_{j = 1}^d
		\bigg( \Xi(\alpha^j,r)
		\one{F}(\alpha^j,r)
		+\frac1\Dx \one{F^{\tt c}}(\alpha^j,r)\bigg),
	\end{aligned}
\end{equation}
where we have also used
that $\Xi(\alpha^j,r^j)\lesssim
\Xi(\alpha^j,r)$, recalling that
$r^j$ and $r$ are defined in
\eqref{eq:fundamentalsolution}
and \eqref{eq:def-r}, respectively.
The product can be expanded as:
\begin{align}\label{eq:a_F_bound2}
	\sum_{I \subseteq \{1,\ldots, d\}}
	\Biggl(\, \prod_{j \in I} \Xi(\alpha^j,r)
	\one{F}(\alpha^j,r)
	\prod_{j \not\in I}\frac1\Dx
	\one{F^{\tt c}}(\alpha^j,r)\Biggr).
\end{align}
Going forward, we will express
\begin{equation}\label{eq:Xi-def-tmp0}
	\Xi(\alpha^j,r)=\frac{1}{\sqrt{2\overline{c}t}}
	\bk{1+\frac{\abs{x_\alpha^j}^2}{2 \overline{c}t}
	+\frac{\abs{x_\alpha^j}^3}{\abs{2 \overline{c}t}^{3/2}}}^{-1}
\end{equation}
in terms of $t$ and $x_\alpha=\alpha \Dx$ instead of $r$
and $n=\alpha^j$ (see \eqref{eq:Xinr-def}).

Because of the indicator on $F^{\tt c}$,
unless $I=\{1,\ldots,d\}$, the summand
indexed by $I$ in \eqref{eq:a_F_bound2} is zero
for $2 \overline{c} t \ge \Dx^2$.
Using this and \eqref{eq:Xinr-def},
\eqref{eq:def-r} yields
\begin{equation}\label{eq:a-alpha-large-t}
	\abs{a_\alpha(t)}
	\one{\{2 \overline{c} t \ge \Dx^2\}}
	\lesssim
	t^{-d/2} \prod_{j = 1}^d
	\bk{1 + \frac{\abs{x_\alpha^j}^2}{2 \overline{c}t}
	+\frac{\abs{x_\alpha^j}^3}{\abs{2 \overline{c}t}^{3/2}}}^{-1}
	\one{\{2 \overline{c} t \ge \Dx^2\}},
\end{equation}
noting that $1\lesssim \frac{t^{1/2}}{\Dx}$ in this case.
On the other hand, when $2 \overline{c} t <  \Dx^2$ we have
$(0,r^j) \notin F$ for all $j$, and hence
each summand \eqref{eq:a_F_bound2} is zero
unless $I$ is exactly the set of indices
$j$ for which $\alpha^j \neq 0$.
Therefore, with $Z(\alpha)$ being the
number of entries of $\alpha$ which are zero,
\begin{align}
	\abs{a_\alpha(t)}
	\one{\{2\overline{c}t<\Dx^2\}}
	&\lesssim \Dx^{-Z(\alpha)}
	\prod_{\{j:\alpha^j \neq 0\}} t^{-\hf}
	\bk{1+\frac{\abs{x_\alpha^j}^2}{2 \overline{c}t}
	+ \frac{\abs{x_\alpha^j}^3}{\abs{2 \overline{c}t}^{3/2}} }^{-1}
	\one{\{2 \overline{c} t <  \Dx^2\}}	\notag \\
&= \Dx^{-Z(\alpha)} t^{-(d-Z(\alpha))/2}
	\prod_{j=1}^d\bk{1+\frac{\abs{x_\alpha^j}^2}{2 \overline{c}t}
	+ \frac{\abs{x_\alpha^j}^3}{\abs{2 \overline{c}t}^{3/2}} }^{-1}
	\one{\{2 \overline{c} t <  \Dx^2\}},
	\label{eq:a-alpha-small-t}
\end{align}
where $\Dx^{-Z(\alpha)} t^{-(d-Z(\alpha))/2}=
\left(\frac{t^{1/2}}{\Dx}\right)^{Z(\alpha)}t^{-d/2}
\le \left(\frac{1}{\sqrt{2 \overline{c}}}
\right)^{Z(\alpha)}t^{-d/2}$.
Adding \eqref{eq:a-alpha-large-t}
and \eqref{eq:a-alpha-small-t}
results in \eqref{eq:pointwise_da} for $m=0$.

Let us now turn our attention to \eqref{eq:a_lp} ($m=0$).
From \eqref{eq:pointwise_da} ($m=0$),
for $\alpha=(\alpha^1,\ldots,\alpha^d)\in \Z^d$ and $t>0$,
\begin{align*}
	\abs{a_\alpha(t)}^p
	\lesssim t^{-dp/2} \prod_{j=1}^d
	\bk{1+\left(\frac{|\alpha^j|}{\sqrt{r}}\right)^2
	+ \left(\frac{|\alpha^j|}{\sqrt{r}}\right)^3}^{-p},
\end{align*}
considering that $x_\alpha^j=\alpha^j\Dx$
and $r=2\overline{c}t/\Dx^2$ (see \eqref{eq:def-r}).
By symmetry and the integral comparison test,
\begin{align}
	\norm{{\bm a}(t)}_{L^p\alpha}^p
	&\lesssim
	\Dx^d t^{-dp/2}
	\left(\int_0^\infty\bk{1+\left(\frac{x}{\sqrt{r}}\right)^2
	+ \left(\frac{x}{\sqrt{r}}\right)^3}^{-p}\, dx\right)^d
	\notag \\ & \lesssim
	\Dx^d t^{-dp/2} r^{d/2}
	\left(\,\int_0^\infty\bk{1+y^2+y^3}^{-p}\, dy\right)^d
	\lesssim t^{-dp/2+d/2},
	\label{eq:comp-int-test}
\end{align}
since the $y$-integral is clearly convergent.
Taking the $p$th root of both sides, we obtain
the estimate $\norm{a(t)}_{L^p(\R^d)}
\lesssim t^{d/(2p)-d/2}$, which confirms
\eqref{eq:a_lp} for $m=0$.

Applying the $m$th order difference
in the $\ell$th direction to
\eqref{eq:fund_final_expression} results in
\begin{align}\label{eq:m-deriv}
	(\nabla_+^\ell)^m a_\alpha(t)
	& =
	\Biggl(\prod_{j\neq \ell} a_{\alpha^j}^{(c^j)}(t)\Biggr)
	\underbrace{\left(\frac{1}{\Dx^m}\sum_{k=0}^m
	(-1)^k \binom{m}{k} a_{\alpha^\ell+k}^{(c^\ell)}(t)
	\right)}_{\eqqcolon\mathcal{D}_{\ell,m}(\alpha^\ell,t)},
\end{align}
where, in particular (recalling the definition
of the scaled fundamental solution
$a^{(c^\ell)}_{\alpha^\ell}$ of \eqref{eq:a-alphaj-cj}),
\begin{alignat*}{2}
	\mathcal{D}_{\ell,1}(\alpha^\ell,t)
	&=\left(a_{\alpha^\ell}^{(c^\ell)}(t)
	-a_{\alpha^\ell+1}^{(c^\ell)}(t)\right)/\Dx
	&& \text{($m=1$)},	\\
	\mathcal{D}_{\ell,2}(\alpha^\ell,t)
	&=\left(a_{\alpha^\ell}^{(c^\ell)}(t)
	-2a_{\alpha^\ell+1}^{(c^\ell)}(t)
	+a_{\alpha^\ell+2}^{(c^\ell)}(t)
	\right)/\Dx^2
	&\qquad& \text{($m=2$)}.
\end{alignat*}

Suppose $\alpha^\ell\neq 0$.
In the small time regime
$\sqrt{2\overline{c}t}\le |x_\alpha^\ell|$,
\begin{align}
	\left(1+\frac{|x_\alpha^\ell|^2}{4\overline{c}t}
	+\frac{|x_\alpha^\ell|^3}{(2\overline{c} t)^{3/2}}\right)^{-1}
	& =\left(
	1+\Bigl(\frac{|x_\alpha^\ell|}{\sqrt{2\overline{c}t}}\Bigr)^2
	+\Bigl(\frac{|x_\alpha^\ell|}{\sqrt{2\overline{c} t}}\Bigr)^3
	\right)^{-1}
	\sim \Bigl(\frac{\sqrt{2\overline{c} t}}{|x_\alpha^\ell|}\Bigr)^3,
	\label{eq:small-time-tmp1}
\end{align}
where $a \sim b$ means that both $a \lesssim b$
and $a \gtrsim b$ hold simultaneously.
Recalling \eqref{eq:Xi-def-tmp0} and \eqref{eq:def-r},
we see from \eqref{eq:small-time-tmp1} that
\begin{equation}\label{eq:Xi-lower-bound}
	\frac{(2\overline{c} t)^{3/2}}{|x_\alpha^\ell|^3}
	\lesssim t^{1/2}\, \Xi(\alpha^\ell,r),
	\qquad \text{for $\sqrt{2\overline{c}t}
	\le |x_\alpha^\ell|$}.
\end{equation}
Hence, using \eqref{eq:first_diff},
alongside \eqref{eq:a-alphaj-cj}, \eqref{eq:sd-heat-c-1D},
and \eqref{eq:sol-1D-c}, and considering
that $\sqrt{2\overline{c}t}\le |x_\alpha^\ell|$
($\alpha^\ell\neq 0$)
$\Longrightarrow \frac{|\alpha^\ell|^2}{r^\ell}\ge 1$
and
$\frac{1}{|x_{\alpha}^\ell|} \le (2\overline{c} t)^{-1/2}$,
we derive
\begin{align*}
	\abs{\mathcal{D}_{\ell,1}(\alpha^\ell,t)}
	& =\Dx^{-2}\abs{a_{\alpha^\ell+1}(r^\ell)
	-a_{\alpha^\ell}(r^\ell)}
	\lesssim \Dx^{-2}
	\frac{r^\ell}{|\alpha^\ell|^4}
	\le \frac{2\overline{c}t}{|x_{\alpha}^\ell|^4}
	\\ & \lesssim
	\frac{(2\overline{c}t)^{1/2}}{|x_{\alpha}^\ell|^3}
	= (2\overline{c}t)^{-1}
	\frac{(2\overline{c}t)^{3/2}}{|x_{\alpha}^\ell|^3}
	\overset{\eqref{eq:Xi-lower-bound}}{\lesssim}
	t^{-1/2}\, \Xi(\alpha^\ell,r),
\end{align*}
and by \eqref{eq:second_diff},
\begin{align*}
	\abs{\mathcal{D}_{\ell,2}(\alpha^\ell,t)}
	& =\Dx^{-3}\abs{a_{\alpha^\ell}(r^\ell)
	-2a_{\alpha^\ell+1}(r^\ell)
	+a_{\alpha^\ell+2}(r^\ell)}
	\\ &
	\lesssim \frac{1}{|x_\alpha^\ell|^3}
	= (2\overline{c}t)^{-3/2}
	\frac{(2\overline{c}t)^{3/2}}{|x_\alpha^\ell|^3}
	\overset{\eqref{eq:Xi-lower-bound}}{\lesssim}
	t^{-1}\, \Xi(\alpha^\ell,r).
\end{align*}
To summarize, within the small time regime where
$\sqrt{2\overline{c}t} \le |x_\alpha^\ell|$,
the following estimates hold:
\begin{equation}\label{eq:small-time-D-est}
	\begin{alignedat}{2}
		\abs{\mathcal{D}_{\ell,1}(\alpha^\ell,t)}
		&\lesssim t^{-1/2}\, \Xi(\alpha^\ell,r)
		&\qquad& \text{for $\alpha^\ell\neq 0$},
		\\
		\abs{\mathcal{D}_{\ell,2}(\alpha^\ell,t)}
		&\lesssim t^{-1}\, \Xi(\alpha^\ell,r)
		&& \text{for $\alpha^\ell\neq 0$}.
	\end{alignedat}
\end{equation}

Next, we consider the large
time regime $\sqrt{2\overline{c}t}>|x_\alpha^\ell|$.
Note that in this regime, we have
$C\sqrt{r^\ell}\ge \abs{\alpha^\ell}$,
where $C\coloneqq\sqrt{\overline{c}/\min_j c^j}
=\sqrt{\max_j c^j/\min_j c^j}\ge 1$.
By \eqref{eq:nabla-plus-an}
and \eqref{eq:nabla-neg-an}---again keeping
in mind \eqref{eq:a-alphaj-cj}, \eqref{eq:sd-heat-c-1D},
and \eqref{eq:sol-1D-c}---we obtain
\begin{equation}\label{eq:D-ell-1-large}
	\abs{\mathcal{D}_{\ell,1}(\alpha^\ell,t)}
	\lesssim
	\begin{cases}
		\displaystyle
		\Dx^{-2}(r^\ell)^{-1/2}
		a_{\alpha^\ell}(r^\ell)
		\lesssim t^{-1/2}
		a_{\alpha^\ell}^{(c^\ell)}(t)
		& \text{for $\alpha^\ell\ge 1$},
		\\ \displaystyle
		\Dx^{-2}(r^\ell)^{-1/2}
		a_{\alpha^\ell}(r^\ell)
		\lesssim t^{-1/2}
		a_{\alpha^\ell}^{(c^\ell)}(t)
		& \text{for $\alpha^\ell\le -2$}.
	\end{cases}
\end{equation}
Similarly, by \eqref{eq:nabla-plus-squared-an}
and \eqref{eq:nabla-neg-squared-an},
\begin{equation}\label{eq:D-ell-2-large}
	|\mathcal{D}_{\ell,2}(\alpha^\ell,t)|
	\lesssim
	\begin{cases}
		\displaystyle
		\Dx^{-3} (r^\ell)^{-1}
		a_{\alpha^\ell}(r^\ell)
		\lesssim t^{-1}
		a_{\alpha^\ell}^{(c^\ell)}(t),
		& \text{for $\alpha^\ell\ge 1$},
		\\ \displaystyle
		\Dx^{-3} (r^\ell)^{-1}
		a_{\alpha^\ell+2}(r^\ell)
		\lesssim
		t^{-1}a_{\alpha^\ell+2}^{(c^\ell)}(t),
		& \text{for $\alpha^\ell\le -3$}.
	\end{cases}
\end{equation}
From \eqref{eq:fund_final_expression} we have
$$
a_{\alpha^\ell}^{(c^\ell)}(t)
=\Xi(\alpha^\ell,r^\ell)
\Psi(\alpha^\ell,r^\ell),
$$
where $\abs{\Psi(\alpha^\ell,r^\ell)}\lesssim 1$
(see \eqref{eq:Psinr-def}) and $\Xi(\alpha^\ell,r^\ell) \lesssim \Xi(\alpha^\ell,r)$, recalling that
$r^\ell$ is defined by \eqref{eq:fundamentalsolution}
and $r$ by \eqref{eq:def-r}.
Therefore, in the estimates \eqref{eq:D-ell-1-large}
and \eqref{eq:D-ell-2-large}, we
can estimate $a_{\alpha^\ell}^{(c^\ell)}(t)$
by $\Xi(\alpha^{\ell},r)$.

In the large time regime where
($\alpha^\ell\neq 0$)
\begin{equation}\label{eq:large-time-tmp}
	\sqrt{2\overline{c}t}>|x_\alpha^\ell|
	=\abs{\alpha^\ell} \Dx
	\quad \Longleftrightarrow \quad \frac{1}{\sqrt{r}} =
	\frac{\Dx}{\sqrt{2\overline{c}t}}
	<\frac{1}{\abs{\alpha^\ell}}\le 1,
\end{equation}
one can check that\footnote{
    We define $\Theta(x,t) := f\left(\frac{x}{4\overline{c}t}\right)$, where $f(x) := \frac{1}{1 + x^2 + |x|^3}$ and $\overline{c} > 0$. Let $y = \frac{x}{2\overline{c}t}$ and $\delta = \frac{\Delta}{2\overline{c}t}$, so that $\Theta(x,t) = f(y)$ and $\Theta(x+\Delta, t) = f(y + \delta)$. Now observe that the ratio
$$
\frac{f(y + \delta)}{f(y)} = \frac{1 + y^2 + |y|^3}{1 + (y + \delta)^2 + |y + \delta|^3}
$$
is continuous in $y$ and tends to $1$ as $|y| \to \infty$, since the leading terms in the numerator and denominator grow like $|y|^3$. Therefore, the ratio is bounded above on $\mathbb{R}$, and we can define $C(\delta)$ as this bound. If $\delta \le K$, we set $C(K) = \sup_{\delta \in (0,K]} C(\delta)$, which gives
$$
f(y + \delta) \le C(K) f(y),
$$
and hence $\Theta(x + \Delta, t) \le C(K) \Theta(x, t)$ for all $x$.
}, for any $\alpha^\ell\in \Z$,
$$
\Xi(\alpha^\ell+2,r)
\leq C(K)\Xi(\alpha^{\ell},r),
$$
where $C(K)$ is a constant dependent on $K$, where $K$ satisfies $\frac{2\Dx}{\sqrt{2\overline{c}t}} \le K$.
Note that we can set $C(K) = 1$ if $\alpha^\ell \ge 0$.
This condition is satisfied in the current large
time regime \eqref{eq:large-time-tmp}, as then we have
$\frac{2\Dx}{\sqrt{2\overline{c} t}}<2 \eqqcolon K$.
Therefore, in the estimate \eqref{eq:D-ell-2-large}
of $\mathcal{D}_{\ell,2}(\alpha^\ell,t)$,
we can replace $a_{\alpha^\ell+2}^{(c^\ell)}(t)$
by $\Xi(\alpha^{\ell},r)$.
Consolidating our findings, in the
large time regime \eqref{eq:large-time-tmp},
the following estimates apply:
\begin{equation}\label{eq:large-time-D-est}
	\begin{alignedat}{2}
		\abs{\mathcal{D}_{\ell,1}(\alpha^\ell,t)}
		&\lesssim t^{-1/2}\, \Xi(\alpha^\ell,r)
		&\qquad& \text{for } \alpha^\ell\neq -1,0,\\
		\abs{\mathcal{D}_{\ell,2}(\alpha^\ell,t)}
		&\lesssim t^{-1}\, \Xi(\alpha^\ell,r)
		&& \text{for } \alpha^\ell\neq -2,-1,0.
	\end{alignedat}
\end{equation}

We now turn our attention to the
exception cases
in \eqref{eq:small-time-D-est} and
\eqref{eq:large-time-D-est}. To begin, we focus on the
specific case of $\alpha^\ell = 0$
for the first-order difference term
$\mathcal{D}_{\ell,1}$. By \eqref{eq:a-alphaj-cj},
\eqref {eq:nabla-plus-a0},
our improved ``small time" estimate
\eqref{eq:pointwise_da}
($m=0$, $d=1$), and finally
that $t^{-1/2}\sim \Xi(0,r)
=1/\sqrt{2\overline{c}t}$ (see
\eqref{eq:Xinr-def}),
\begin{align*}
	\abs{\mathcal{D}_{\ell,1}(0,t)}
	& =\Dx^{-2}\abs{a_{0}(r^\ell)-a_{1}(r^\ell)}
	\\ &
	\overset{\eqref{eq:nabla-plus-a0}}{\lesssim}
	\Dx^{-2} (r^{\ell})^{-1}a_0(r^\ell)
	\sim t^{-1}a_0(r^\ell)
	\\ &
	=\Dx\, t^{-1}a_0^{(c^\ell)}(t)
	\overset{\mathclap{\eqref{eq:pointwise_da}}}{\lesssim}
	\Dx\, t^{-1} \bk{\frac1{\sqrt{2\overline{c}}}
	\wedge \frac{t^{1/2}}{\Dx}}\Xi(0,r)
	\\ &
	=\Xi(0,r)\times
	\begin{cases}
		\Dx\, t^{-1}\frac{t^{1/2}}{\Dx}=t^{-1/2}
		& \text{for
		$\frac{t^{1/2}}{\Dx}
		\le \frac{1}{\sqrt{2\overline{c}}}$},\\
		\Dx\,  t^{-1}\frac1{\sqrt{2\overline{c}}}
		\lesssim t^{-1/2}
		& \text{for
		$\frac{t^{1/2}}{\Dx}
		>\frac{1}{\sqrt{2\overline{c}}}$}
	\end{cases},
\end{align*}
so we have the desired bound $\abs{\mathcal{D}_{\ell,1}(0,t)}
\lesssim t^{-1/2} \Xi(0,r)$ for $\alpha^\ell=0$.
This bound holds for all $t>0$.

Next, we consider the case $\alpha^\ell= 0$
for the second order difference term
$\mathcal{D}_{\ell,2}$. By
\eqref{eq:time-decay-tmp} for $m=2$,
\begin{align*}
	\abs{\mathcal{D}_{\ell,2}(0,t)}
	& =\Dx^{-3}\abs{a_{0}(r^\ell)-2a_{1}(r^\ell)
	+a_{2}(r^\ell)}
	\\ &
	\leq \Dx^{-3}\, C_2\, (r^\ell)^{-3/2}
	\sim t^{-3/2}=t^{-1}t^{-1/2}
	\sim t^{-1} \Xi(0,r),
\end{align*}
where we have once again used that
$t^{-1/2}\sim \Xi(0,r)$. Therefore,
we obtain the sought-after estimate
$\abs{\mathcal{D}_{\ell,2}(0,t)}
\lesssim t^{-1} \Xi(0,r)$
for $\alpha^\ell=0$.
This bound, similar to the previous $\alpha^\ell=0$
bound for $\mathcal{D}_{\ell,1}$, holds
for all $t > 0$.

The remaining cases are the ones where
$\alpha^\ell=-1,-2\eqqcolon l$ in the large
time regime \eqref{eq:large-time-tmp}:
\begin{equation}\label{eq:large-time-tmp2}
	\sqrt{2\overline{c}t}>|x_\alpha^\ell|
	=\abs{l} \Dx
	\quad \Longleftrightarrow \quad
	\frac{\Dx}{\sqrt{2\overline{c}t}}<\frac{1}{\abs{l}}
	\le 1.
\end{equation}

We begin with the second order difference term
$\mathcal{D}_{\ell,2}(l,t)$.
Similar to the previous case, we deduce the
following bound for $l=-2,-1$:
\begin{align*}
	\abs{\mathcal{D}_{\ell,2}(l,t)}
	& =\Dx^{-3}\abs{a_{l}(r^\ell)-2a_{l+1}(r^\ell)
	+a_{l+2}(r^\ell)}
	\\ &
	\overset{\mathclap{\eqref{eq:time-decay-tmp}}}{\leq}
	\Dx^{-3}\, C_2\, (r^\ell)^{-3/2}
	\sim t^{-3/2}=t^{-1}t^{-1/2}
	\sim t^{-1} \Xi(0,r).
\end{align*}
One can check that for $l<0$,
\begin{equation}\label{eq:Xi-null-l}
	\Xi(0,r) \leq C(K)\Xi(l,r),
\end{equation}
where $C(K)$ is a constant dependent on $K$, given
by $\frac{\abs{l}\Dx}{\sqrt{2\overline{c}t}} \le K$.
This condition is satisfied in the large
time regime \eqref{eq:large-time-tmp2},
because then we have
$$
\frac{\abs{l}\Dx}{\sqrt{2\overline{c} t}}
<\frac{1}{\sqrt{2}}\eqqcolon K.
$$
Therefore, in the large time
regime \eqref{eq:large-time-tmp2},
\begin{align*}
	\abs{\mathcal{D}_{\ell,2}(l,t)}
	\overset{\eqref{eq:Xi-null-l}}{\lesssim}
	t^{-1} \Xi(l,r),\qquad l=-2,-1.
\end{align*}

Finally, we consider the first order
difference term $\mathcal{D}_{\ell,1}(l,t)$
for $l=-1$. By \eqref{eq:time-decay-tmp} for $m=1$,
\begin{align*}
	\abs{\mathcal{D}_{\ell,l}(l,t)}
	& =\Dx^{-2}\abs{a_{0}(r^\ell)-a_{1}(r^\ell)}
	\\ &
	\lesssim \Dx^{-2}
	\, C_1\, (r^\ell)^{-3/2}
	\sim \Dx \, t^{-3/2}
	= \Dx\, t^{-1}t^{-1/2}
	\\ &
	= \Dx\, t^{-1}\Xi(0,r)
	= \Dx\, t^{-1/2} t^{-1/2}\Xi(0,r).
\end{align*}
In the large time regime \eqref{eq:large-time-tmp2},
$\Dx\, t^{-1/2}\lesssim 1$ and so
\begin{align*}
	\abs{\mathcal{D}_{\ell,1}(l,t)}
	\lesssim
	t^{-1/2}\Xi(0,r)
	\overset{\eqref{eq:Xi-null-l}}{\lesssim}
	t^{-1/2}\Xi(l,r), \qquad l=-1.
\end{align*}

In summary, from \eqref{eq:small-time-D-est} and
\eqref{eq:large-time-D-est}, along with the previous
analysis of the exception cases, we have established
that for $t>0$ and $\alpha^\ell\in\Z$,
$$
\abs{\mathcal{D}_{\ell,1}(\alpha^\ell,t)}
\lesssim t^{-1/2}\Xi(\alpha^\ell,r)
$$
and
$$
\abs{\mathcal{D}_{\ell,2}(\alpha^\ell,t)}
\lesssim t^{-1}\Xi(\alpha^\ell,r).
$$
By inserting these estimates into \eqref{eq:m-deriv}
and applying the splitting $1=\one{F}+\one{F^{\tt c}}$,
see \eqref{eq:F-def}, \eqref{eq:Fc-def},
and \eqref{eq:a_F_bound1}, we
derive the following result for $m=1,2$:
\begin{align*}
	\abs{(\nabla_+^\ell)^m a_\alpha(t)}
	\lesssim t^{-m/2}\left(\,
	\prod_{j = 1}^d \Xi(\alpha^j,r)
	\one{F}(\alpha^j,r)
	+\frac1\Dx\one{F^{\tt c}}(\alpha^j,r)\right).
\end{align*}
We can now repeat the argument that
was used to estimate \eqref{eq:a_F_bound1},
eventually leading to
\begin{align*}
	&\abs{(\nabla_+^\ell)^m a_\alpha(t)}\lesssim t^{-m/2}
	\left(\bk{\frac1{\sqrt{2\overline{c}}}
	\wedge \frac{ t^{1/2}}{\Dx}}^{Z(\alpha)}
	t^{-d/2}\prod_{j=1}^d
	\bk{1+\frac{\abs{x_\alpha^j}^2}{2\overline{c}t}
	+\frac{\abs{x_\alpha^j}^3}{\abs{2\overline{c}t}^{3/2}}}^{-1}
	\right),
\end{align*}
which is \eqref{eq:pointwise_da} for $m=1,2$.

Finally, given \eqref{eq:pointwise_da} for $m=1,2$, we
can proceed as in the $m = 0$ case---see
\eqref{eq:comp-int-test}---to derive the $L^p$ estimate
in \ref{eq:a_lp}. The only distinction
lies in the inclusion of the additional factor $t^{-m/2}$.
\end{proof}

%

\begin{remark}
The same estimate \eqref{eq:pointwise_da} holds for
backward differences, as well as for alternating forward and
backward differences. Therefore, for
$\alpha\in \Z^d$ and $t>0$,
\begin{equation}\label{eq:a_F_bound3}
	\begin{aligned}
		\abs{\nabla_+^\ell\nabla_-^\ell a_\alpha(t)}
		\lesssim~&\biggl(\frac1{\sqrt{2\overline{c}}}
		\wedge \frac{ t^{{1/2}}}{\Dx}\biggr)^{Z(\alpha)}
		\\ &\times
		t^{-\bk{d + 2}/2}\prod_{j = 1}^d
		\bk{1 + \frac{\abs{x_\alpha^j}^2}{2 \overline{c}t}
		+\frac{\abs{x_\alpha^j}^3}{\abs{2\overline{c}t}^{3/2}} }^{-1}.
	\end{aligned}
\end{equation}
\end{remark}

Using Proposition \ref{lem:green_estimates}, we
can also solve the (anisotropic)
constant-coefficient semi-discrete
heat equation with a source.

\begin{lemma}[Solution formula, constant coefficients]
\label{lem:duhamel_representation}
Fix $p,q$ satisfying $p > d$ and $2/q+d/p<1$.
Let $c^1,\ldots,c^d>0$, $\bm{f}
\in L^q\bigl([0,T];L^p_\Dx(\R^d)\bigr)$,
and $\bm{\psi} \in L^\infty_\Dx(\R^d)$.
Then the unique solution $\bm{u} \in C^1([0,T];L^\infty_\Dx(\R^d))$
of the inhomogeneous
semi-discrete heat equation
$$
\begin{cases}\displaystyle
	\frac{\d}{\d t}   u_\alpha
	-\sum_{j = 1}^d c^j \nabla_+^j \nabla_-^j u_\alpha
	=f_\alpha, & t \in [0,T], \alpha \in \Z^d,
	\\
	u_\alpha(0) = \psi_\alpha, & \alpha\in\Z^d
\end{cases}
$$
is given by the Duhamel formula
$$
 u_\alpha(t)
\coloneqq \bk{\psi \dconv a(t)}_\alpha
+ \int_0^t \bk{a(t - s) \dconv f(s)}_\alpha \,\d s, \qquad t > 0,
\,\, \alpha \in \Z^d.
$$
This function satisfies
\begin{equation*}
	\begin{split}
		\|\bm{ u}(t)\|_{L^\infty(\R^d)}
		&\leq \|\bm{\psi}\|_{L^\infty(\R^d)}
		+ C\|\bm{f}\|_{L^q([0,T];L^p (\R^d))},  \\
		\|\nabla_+ \bm{ u}(t)\|_{L^\infty (\R^d)}
		&\leq \|\nabla_+ \bm{\psi}\|_{L^\infty(\R^d)}
		+ C\|\bm{f}\|_{L^q([0,T];L^p (\R^d))},
	\end{split}
\end{equation*}
for some $C>0$ that is independent of $\Dx$.
\end{lemma}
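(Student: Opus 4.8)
The plan is to exploit the fact that, for a \emph{fixed} grid size $\Dx$, the operator $A:=\sum_{j=1}^d c^j\nabla_+^j\nabla_-^j$ is a \emph{bounded} linear operator on $L^\infty_\Dx(\R^d)$, with $\|A\|\le 4d\overline{c}\,\Dx^{-2}$ where $\overline c:=\max_j c^j$. Moreover $\bm f\in L^1([0,T];L^\infty_\Dx)$, since $\|\bm f(t)\|_{L^\infty(\R^d)}\le\Dx^{-d/p}\|\bm f(t)\|_{L^p(\R^d)}$ and $L^q([0,T])\subset L^1([0,T])$. Hence the Cauchy problem is a linear inhomogeneous ODE $\bm u'=A\bm u+\bm f$, $\bm u(0)=\bm\psi$, in the Banach space $L^\infty_\Dx(\R^d)$, and by standard theory (Picard iteration, i.e.\ a contraction on $C([0,T];L^\infty_\Dx)$ using boundedness of $A$) it has a unique solution $\bm u\in C([0,T];L^\infty_\Dx)$, which is absolutely continuous as an $L^\infty_\Dx$-valued function, satisfies $\bm u'(t)=A\bm u(t)+\bm f(t)$ for a.e.\ $t$, and is given by the variation-of-constants formula $\bm u(t)=e^{tA}\bm\psi+\int_0^t e^{(t-s)A}\bm f(s)\,\d s$. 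Since $e^{tA}\bm\delta=\bm a(t)$ by the series \eqref{eq:greensfunction}, and $A$ commutes with discrete convolution (because each $\nabla_\pm^j$ does), one has $e^{tA}\bm g=\bm g\dconv\bm a(t)$ and $e^{(t-s)A}\bm f(s)=\bm a(t-s)\dconv\bm f(s)$, which turns the variation-of-constants formula into the stated Duhamel formula. Thus existence, uniqueness and the representation are routine; the substantive content is the two $\Dx$-\emph{independent} estimates, which I now sketch.

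For the $L^\infty$ bound, write $\bm u=\bm\psi\dconv\bm a(t)+\bm w$ with $\bm w(t):=\int_0^t\bm a(t-s)\dconv\bm f(s)\,\d s$. Since $\bm a(t)\ge0$ and $\|\bm a(t)\|_{L^1(\R^d)}=1$ (Proposition~\ref{lem:green_estimates}\ref{lem:green_estimates_positivity}), $\|\bm\psi\dconv\bm a(t)\|_{L^\infty(\R^d)}\le\|\bm\psi\|_{L^\infty(\R^d)}$. For $\bm w$, Lemma~\ref{lem:young} (with exponents $(p_1,p_2,q_1,q_2)=(\infty,p',p,\infty)$) gives $\|\bm a(t-s)\dconv\bm f(s)\|_{L^\infty(\R^d)}\le\|\bm a(t-s)\|_{L^{p'}(\R^d)}\|\bm f(s)\|_{L^p(\R^d)}$, and Proposition~\ref{lem:green_estimates}\ref{lem:green_estimates_derivative-bound} with $m=0$ (and $p$ replaced by $p'$) bounds $\|\bm a(t-s)\|_{L^{p'}(\R^d)}\lesssim(t-s)^{-d/(2p)}$ with a $\Dx$-independent constant. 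Hölder in $s$ then yields $\|\bm w(t)\|_{L^\infty(\R^d)}\le\big(\int_0^t r^{-q'd/(2p)}\,\d r\big)^{1/q'}\|\bm f\|_{L^q_tL^p_\alpha}$, and the time integral is bounded by a constant depending only on $T,p,q,d$ because $q'd/(2p)<1$ (a consequence of $2/q+d/p<1$, and of $p>d$ in the boundary case $q=\infty$). This gives the first estimate.

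For the gradient bound, I use the identity $\nabla_\pm^j(\bm g\dconv\bm h)=(\nabla_\pm^j\bm g)\dconv\bm h=\bm g\dconv(\nabla_\pm^j\bm h)$ to place the difference on the initial datum in the first term and on the heat kernel in the convolution term: $\nabla_+^j\bm u(t)=(\nabla_+^j\bm\psi)\dconv\bm a(t)+\int_0^t(\nabla_+^j\bm a(t-s))\dconv\bm f(s)\,\d s$. The first term is bounded by $\|\nabla_+\bm\psi\|_{L^\infty(\R^d)}$ as before; for the second, Lemma~\ref{lem:young} together with Proposition~\ref{lem:green_estimates}\ref{lem:green_estimates_derivative-bound} for $m=1$ gives $\|(\nabla_+^j\bm a(t-s))\dconv\bm f(s)\|_{L^\infty(\R^d)}\lesssim(t-s)^{-d/(2p)-1/2}\|\bm f(s)\|_{L^p(\R^d)}$, again with a $\Dx$-independent constant, so Hölder in $s$ produces the finite time factor $\big(\int_0^t r^{-q'(d/(2p)+1/2)}\,\d r\big)^{1/q'}$ precisely because $q'(d/(2p)+1/2)<1\iff 2/q+d/p<1$. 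This is exactly where the hypothesis is needed, and it closes the second estimate with a $\Dx$-independent constant.

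The only point requiring care is the $\Dx$-uniformity of the constants: the contraction estimate $\|e^{tA}\|_{L^\infty\to L^\infty}\le1$ alone (together with $\|\bm f(t)\|_{L^\infty}\le\Dx^{-d/p}\|\bm f(t)\|_{L^p}$) would yield the estimates only with a spurious $\Dx^{-d/p}$ factor, so one genuinely needs the refined smoothing bounds $\|(\nabla_+^\ell)^m\bm a(r)\|_{L^{p'}(\R^d)}\lesssim r^{-d/(2p)-m/2}$ of Proposition~\ref{lem:green_estimates}\ref{lem:green_estimates_derivative-bound} and the time-integrability forced by $2/q+d/p<1$ to absorb the resulting singularity at $s=t$. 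I expect this interplay to be the main (though modest) obstacle; the rest is bookkeeping once Proposition~\ref{lem:green_estimates} is available.
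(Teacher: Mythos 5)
Your proposal is correct and follows essentially the same route the paper indicates (it omits the proof, citing \cite{Fjordholm:2023aa}, but states explicitly that the bounds follow from the Duhamel representation together with Young's inequality in space and time): you use the $L^1$-normalisation and positivity of $\bm{a}$ for the initial-data term, the smoothing bounds $\|(\nabla_+^\ell)^m\bm{a}(r)\|_{L^{p'}}\lesssim r^{-d/(2p)-m/2}$ from Proposition~\ref{lem:green_estimates} with Lemma~\ref{lem:young} and H\"older in time for the source term, and you correctly identify that $2/q+d/p<1$ is exactly what makes the singular time factor integrable in the gradient estimate. The preliminary existence/uniqueness argument via the bounded-operator ODE in $L^\infty_\Dx(\R^d)$ is a valid and standard way to justify the representation formula for fixed $\Dx$.
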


The proof is identical to that presented in
\cite[Lemma 4.3]{Fjordholm:2023aa}, and
is therefore omitted here.
In particular, the bounds on ${\bm u}$ and its first
order difference follow from the Duhamel representation
and Young's inequality in time
and space (Lemma \ref{lem:young}).

\subsection{Gaussian estimates away
from $t = 0$}\label{sec:gaussian_estm}

We conclude this section with a (standard)
proof that pointwise Gaussian estimates on
the Green's function $a_\alpha$ hold for
$t \gtrsim \alpha \Dx^2$. These 
estimates are included for completeness, and 
we will not be invoking them subsequently. We start with a technical lemma.

\begin{lemma}\label{thm:oscillatory_integrand}
Let $z = \theta + i b$ for $\theta,b\in
[-\hf,\hf]$, let $\alpha \in \Z$ and $c > 0$,
and set
\begin{align}\label{eq:F_m_defin}
	\mathfrak{F}_m(z,\alpha, c, t)
	\coloneqq \exp\bk{ -\frac{4ct}{\Dx^2}
	\sin^2(\pi z)} e^{2\pi i \alpha z} \bk{e^{2 \pi i z} - 1}^m.
\end{align}
Then
\begin{align*}
	\abs{\mathfrak{F}_m(z,\alpha,c,t)}
	\le 2^m \exp\bk{ \frac{4ct}{\Dx^2}
	\bk{-\theta^2 + 63 b^2} - \alpha b}.
\end{align*}
\end{lemma}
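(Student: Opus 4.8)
The estimate is purely about bounding the modulus of an explicit entire function, so the plan is to take absolute values factor by factor and control the real part of each exponent. Write $z = \theta + ib$. First I would handle the factor $e^{2\pi i \alpha z}$: since $2\pi i \alpha z = 2\pi i\alpha\theta - 2\pi\alpha b$, its modulus is $e^{-2\pi\alpha b}$. This is already better than the claimed $e^{-\alpha b}$ for $|b|\le \hf$, so in fact one only needs $e^{-2\pi\alpha b}\le e^{-\alpha b}$ when $\alpha b\ge 0$ and otherwise absorb the discrepancy — but more carefully, the cleanest route is to keep $e^{-2\pi\alpha b}$ and note $2\pi\ge 1$, so $|e^{2\pi i\alpha z}| = e^{-2\pi\alpha b}\le e^{-\alpha b}$ precisely when $\alpha b\ge 0$; when $\alpha b<0$ one instead wants the term to help, and here the bound as stated has $-\alpha b>0$ on the right, so $e^{-2\pi\alpha b}\le e^{-\alpha b}$ fails. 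I would therefore be more economical: retain $|e^{2\pi i \alpha z}| = e^{-2\pi \alpha b}$ and simply observe that $-2\pi\alpha b \le -\alpha b$ iff $\alpha b\ge 0$; since the final inequality is claimed for all $\alpha,b$, the intended reading must be that the $\sin^2$ term's $b^2$-slack dominates, i.e. one does not split on the sign of $\alpha b$ but rather proves $-2\pi\alpha b \le 63\cdot\tfrac{4ct}{\Dx^2}b^2 - \alpha b + (\text{something})$... Actually the right move is simpler: $|e^{2\pi i\alpha z}| = e^{-2\pi\alpha b}$, and since $|\alpha b|$ could be large, we keep it as $e^{-\alpha b}\cdot e^{-(2\pi-1)\alpha b}$ — no. Let me reorganize: the honest statement to prove is $|\mathfrak F_m|\le 2^m\exp(\tfrac{4ct}{\Dx^2}(-\theta^2+63b^2))e^{-2\pi\alpha b}$, and then since the paper only ever uses this with the weaker $e^{-\alpha b}$, one notes $e^{-2\pi\alpha b}\le e^{-\alpha b}$ whenever $\alpha b \ge 0$ and the complementary case is handled elsewhere; but to be safe I will just prove the bound exactly as displayed, which forces the following accounting.

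**Key steps.** (1) The Bessel-type factor: $|e^{2\pi i z}-1|^m$. With $z=\theta+ib$, $e^{2\pi i z}-1 = e^{-2\pi b}e^{2\pi i\theta}-1$, so $|e^{2\pi i z}-1|\le e^{-2\pi b}+1$; for $b\in[-\hf,\hf]$ this is $\le e^{\pi}+1 < 2\cdot e^{\pi}$, which is not $\le 2$. So the constant $2^m$ cannot come from this naive bound — instead one must pair the factor $(e^{2\pi i z}-1)^m$ with part of the Gaussian factor. The correct identity is $|e^{2\pi i z}-1|^2 = |e^{\pi i z}|^2\,|e^{\pi i z}-e^{-\pi i z}|^2 = e^{-2\pi b}\cdot 4|\sin(\pi z)|^2$. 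Hence $|e^{2\pi i z}-1|^m = 2^m e^{-\pi m b}|\sin(\pi z)|^m$. (2) Now combine with $\exp(-\tfrac{4ct}{\Dx^2}\sin^2(\pi z))$: here "$\sin^2(\pi z)$" in the exponent of $\mathfrak F_m$ should be read as $\operatorname{Re}$ of... no — reading the definition, the exponent is a real number $-\tfrac{4ct}{\Dx^2}\sin^2(\pi z)$ only if $z$ is real; for complex $z$ the paper must mean $\exp$ of the (complex) quantity, and taking modulus gives $\exp(-\tfrac{4ct}{\Dx^2}\operatorname{Re}\sin^2(\pi z))$. So the central estimate is the two-sided control of $\operatorname{Re}\sin^2(\pi z)$ and of $|\sin(\pi z)|$. (3) Compute $\sin(\pi z) = \sin(\pi\theta)\cosh(\pi b) + i\cos(\pi\theta)\sinh(\pi b)$, so $|\sin(\pi z)|^2 = \sin^2(\pi\theta)\cosh^2(\pi b)+\cos^2(\pi\theta)\sinh^2(\pi b) = \sin^2(\pi\theta)+\sinh^2(\pi b)$, and $\operatorname{Re}\sin^2(\pi z) = \operatorname{Re}(\tfrac{1-\cos(2\pi z)}{2}) = \tfrac12(1-\cos(2\pi\theta)\cosh(2\pi b)) = \sin^2(\pi\theta)\cosh(2\pi b) - \sinh^2(\pi b)$... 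I would recompute this cleanly: $\operatorname{Re}\sin^2(\pi z) = |\sin(\pi z)|^2 - 2(\operatorname{Im}\sin(\pi z))^2$? No — use $\sin^2 w = (\operatorname{Re}\sin w + i\operatorname{Im}\sin w)^2$, so $\operatorname{Re}\sin^2(\pi z) = (\operatorname{Re}\sin\pi z)^2 - (\operatorname{Im}\sin\pi z)^2 = \sin^2(\pi\theta)\cosh^2(\pi b) - \cos^2(\pi\theta)\sinh^2(\pi b)$.

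**The main obstacle, and how to finish.** The crux is the elementary-but-fiddly inequality bundling everything together. After taking moduli we need
\[
-\tfrac{4ct}{\Dx^2}\bigl(\sin^2(\pi\theta)\cosh^2(\pi b)-\cos^2(\pi\theta)\sinh^2(\pi b)\bigr) + \tfrac{m}{2}\log\bigl(4(\sin^2\pi\theta+\sinh^2\pi b)\bigr) - \pi m b - 2\pi\alpha b \le m\log 2 + \tfrac{4ct}{\Dx^2}(-\theta^2+63b^2) - \alpha b,
\]
i.e. it suffices to show (a) $\sin^2(\pi\theta)\cosh^2(\pi b) - \cos^2(\pi\theta)\sinh^2(\pi b) \ge \theta^2 - 63 b^2$ for $\theta,b\in[-\hf,\hf]$, which controls the Gaussian factor; and (b) $4(\sin^2\pi\theta + \sinh^2\pi b)\le $ something absorbed — but this term is unbounded-looking in $m$, so actually the $2^m$ must come entirely from step (1)'s exact identity $|e^{2\pi i z}-1|^m = 2^m e^{-\pi m b}|\sin\pi z|^m$, and then $e^{-\pi m b}|\sin\pi z|^m$ must be bounded by $1$ times part of the Gaussian — i.e. one shows $|\sin\pi z|^m \exp(-\tfrac{4ct}{\Dx^2}\sin^2\pi\theta\cosh^2\pi b) \le \exp(\text{small})$ using $x^m e^{-\lambda x^2}$ being bounded, but $\lambda = \tfrac{4ct}{\Dx^2}$ can be tiny (or huge), so this bound must be uniform — it is not. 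Hence I conclude the $(e^{2\pi i z}-1)^m$ factor is simply bounded crudely: $|e^{2\pi i z}-1|\le 1 + e^{2\pi|b|}\le 1+e^{\pi}$ is wrong for $2^m$; the only way $2^m$ works is if the paper intends $|e^{2\pi i z}-1| \le 2$ to hold on the relevant contour, which would need $|b|$ small — but here $|b|\le\hf$ gives $e^{2\pi i z}-1 = e^{-2\pi b+2\pi i\theta}-1$ with $|e^{-2\pi b}|$ up to $e^{\pi}\approx 23$. So the resolution must be: the $-\pi m b$ from the identity is the point — we have $|e^{2\pi i z}-1|^m = 2^m|\sin(\pi z)|^m e^{-\pi m b}$, then bound $e^{-\pi m b}\le e^{\pi m/2}$ and $|\sin\pi z|\le \cosh(\pi/2)\le e^{\pi/2}$... giving $2^m e^{\pi m}$, not $2^m$. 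Therefore the honest plan is: \textbf{(i)} factor $|e^{2\pi iz}-1|^m = 2^m e^{-\pi m b}|\sin\pi z|^m$; \textbf{(ii)} bound $|\sin \pi z|^m \le (|\sin\pi z|^2)^{m/2}$ and absorb using $63 b^2$'s generous slack together with the convexity estimate $\log u \le u - 1$; \textbf{(iii)} prove the key pointwise inequality $\operatorname{Re}\sin^2(\pi z) \ge \theta^2 - 63 b^2$ (for $m=0$ this is already the whole claim) via the expansion above plus the bounds $\sin^2(\pi\theta)\ge \tfrac{4}{\pi^2}\cdot\pi^2\theta^2\cdot(\text{const})$ i.e. $\sin^2(\pi\theta)\cosh^2(\pi b)\ge \sin^2(\pi\theta)\ge 4\theta^2$ (since $|\sin\pi\theta|\ge 2|\theta|$ on $[-\hf,\hf]$) and $\cos^2(\pi\theta)\sinh^2(\pi b)\le \sinh^2(\pi b)\le 63 b^2$ (checking $\sinh^2(\pi b)\le 63 b^2$ numerically at $b=\hf$: $\sinh(\pi/2)\approx 2.30$, square $\approx 5.3$, vs $63/4\approx 15.75$ — comfortably true, and the function ratio is increasing); \textbf{(iv)} collect, noting the leftover $e^{-2\pi\alpha b}\le e^{-\alpha b}$ follows whenever $\alpha b\ge 0$ and in the complementary case one rebalances the $63 b^2$ budget. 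The genuinely delicate point is getting the constant $63$ to simultaneously cover $\sinh^2(\pi b)$, the $-\pi m b$ term, and the $\log|\sin\pi z|^m$ term — so I would track those three contributions separately and verify $63$ suffices by a single monotonicity argument in $|b|\in[0,\hf]$.
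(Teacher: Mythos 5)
Your core estimate is correct and is the real content of the lemma: writing $\sin(\pi z)=\sin(\pi\theta)\cosh(\pi b)+i\cos(\pi\theta)\sinh(\pi b)$, you get $\mathfrak{R}\sin^2(\pi z)=\sin^2(\pi\theta)\cosh^2(\pi b)-\cos^2(\pi\theta)\sinh^2(\pi b)\ge\sin^2(\pi\theta)-\sinh^2(\pi b)\ge\theta^2-63b^2$ on $[-\hf,\hf]^2$, using $\abs{\sin(\pi\theta)}\ge 2\abs{\theta}$ and the monotonicity of $\sinh(x)/x$ (giving $\sinh^2(\pi b)\le 4\sinh^2(\pi/2)\,b^2\approx 21b^2$). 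This is a clean, slightly different route from the paper, which instead bounds $\abs{\mathfrak{R}(\sin^2(\pi\theta)-\sin^2(\pi z))}\le\tfrac32\abs{1-\cosh(2\pi b)}\le 63b^2$; the two are essentially equivalent in effort, and yours even yields a better constant.

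The genuine gap is that you never finish the other two factors. You correctly observe that $\abs{e^{2\pi i\alpha z}}=e^{-2\pi\alpha b}\le e^{-\alpha b}$ requires $\alpha b\ge0$, and that $\abs{e^{2\pi iz}-1}\le e^{-2\pi b}+1\le 2$ requires $b\ge0$ — and indeed the stated inequality is false outside these regimes (take $m=1$, $\alpha=0$, $\theta=0$, $b=-\hf$, $t\to0$: the left side tends to $e^{\pi}-1\approx 22$ while the right side tends to $2$). But having diagnosed this, you neither restrict to the regime where the claim holds nor prove a corrected bound; the proposal ends with an unexecuted plan to "rebalance the $63b^2$ budget" and absorb $\abs{\sin(\pi z)}^m$ into the Gaussian factor, which you yourself note cannot work uniformly in $4ct/\Dx^2$. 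A complete proof must commit: either prove the bound with $e^{-2\pi\alpha b}$ in place of $e^{-\alpha b}$ and $(e^{-2\pi b}+1)^m$ in place of $2^m$ (which is what the modulus computation actually gives), or add the hypotheses $\alpha b\ge0$ and $b\ge0$ (for $m\ge1$) under which the stated constants are correct. For the record, the paper's own proof is terse on exactly these points — it writes $\abs{\mathfrak{F}^{(5)}}\le(e^{-2\pi b}+1)^m\le 2^m$ and $\abs{\mathfrak{F}^{(3)}}=e^{-2\pi\alpha b}$ and then asserts the conclusion, tacitly using $b\ge0$ and $\alpha b\ge0$; this is harmless there because the lemma is only ever invoked with $m=0$ and with $b^j$ chosen to have the same sign as $\alpha^j$. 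Your instinct that something is off is sound, but a proof that trails off into alternatives it has already shown to be dead ends is not a proof.
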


\begin{proof}
We write $\mathfrak{F}_m$ as a
product of five factors $\mathfrak{F}^{(k)}$ thus:
\begin{align*}
	&\mathfrak{F}_m(z,\alpha,c,t)
	\eqqcolon \prod_{k = 1}^5 \mathfrak{F}^{(k)},
\end{align*}
where
\begin{align*}
	&\mathfrak{F}^{(1)}\coloneqq \exp\bk{\frac{-4ct}{\Dx^2}
	\sin^2(\pi \theta)},\qquad
	\mathfrak{F}^{(2)}\coloneqq \exp\bk{\frac{4ct}{\Dx^2}
	\bk{\sin^2(\pi \theta)- \sin^2(\pi z)}},
	\\ &
	\mathfrak{F}^{(3)}\coloneqq e^{2\pi i \alpha
	\bk{z - \theta}},
	\qquad
	\mathfrak{F}^{(4)}\coloneqq e^{2\pi i \alpha \theta},
	\qquad
	\mathfrak{F}^{(5)}\coloneqq  \bk{e^{2 \pi i z} - 1}^m.
\end{align*}
We have directly that
\begin{align}\label{eq:F34}
	\abs{\mathfrak{F}^{(3)}} = e^{-2 \pi\alpha b},
	\qquad
	\abs{\mathfrak{F}^{(4)}} = 1,\qquad
	\abs{\mathfrak{F}^{(5)}} \le \bk{e^{- 2\pi b} + 1}^m
	\le 2^m.
\end{align}
Since $\sin^2(\pi \theta) \ge \theta^2$,
we also have
\begin{align}\label{eq:F1}
	\abs{\mathfrak{F}^{(1)}}
	\le \exp\bk{\frac{-4ct}{\Dx^2}\theta^2}.
\end{align}
Finally, denoting by $\mathfrak{R}z$
the real part of $z$,
\begin{align*}
	&\bigl|\mathfrak{R}(\sin^2(\pi \theta) - \sin^2(\pi z) )\bigr|
	\\ &
	= \biggl|\sin^2(\pi \theta)
	+\frac14 \mathfrak{R} \bk{e^{2\pi i \theta} e^{-2\pi b}
	+ e^{-2\pi i \theta} e^{2\pi b} - 2}\biggr|
	\\ &
	= \biggl| \sin^2(\pi \theta)
	+\frac12 \bk{ \cos(2 \pi \theta)\cosh(2\pi b) - 1} \biggr|
	\\ &
	\le \sin^2(\pi \theta) \abs{1 - \cosh(2 \pi b)}
	+\frac12 \abs{1 - \cosh(2 \pi b)}
	\le \frac32  \abs{1 - \cosh(2 \pi b)},
\end{align*}
where we used the elementary
trigonometric identity
$$
\cos(2 \pi \theta) = 1- 2 \sin^2(\pi \theta).
$$
For $b \in [-\hf,\hf]$, we have
$\abs{1 - \cosh(2 \pi b)}\le  42 b^2$.
This gives
\begin{align*}
	\abs{\mathfrak{F}^{(2)}}
	\le e^{\frac{252ct}{\Dx^2} b^2}.
\end{align*}
Together with \eqref{eq:F34} and \eqref{eq:F1},
we get the desired bound.
\end{proof}
%
In the large time regime ($t$ bounded away 
from $0$), we can in fact establish a Gaussian --- as opposed 
to algebraic --- tail (cf. Proposition \ref{lem:green_estimates} (iii)):
\begin{proposition}\label{thm:a_pointwise_exponential}
Let $a$ be given by \eqref{eq:greensfunction}.
Then, for any
$b^j \in [-\hf,\hf]$, we have the estimate
\begin{align}\label{eq:a_expbound_b}
	\abs{a_\alpha(t)}
	&\le \pi^{d/2}\prod_{j = 1}^d \frac1{\sqrt{4 c^j t}}
	\exp\biggl({\frac{C_0c^j t}{2\Dx^2}
	|b^j|^2 - \alpha^j b^j }\biggr)
\end{align}
where $C_0\coloneqq 252$.
In particular, for $t \ge \max_{j \le d}\abs{\alpha^j}
\Dx^2/\bk{ \min_{j \le d} 2C_0c^j}$, we have
the pointwise estimate
\begin{align}\label{eq:a_expbound_alpha}
	\abs{a_\alpha(t)}
	&\le \pi^{d/2} \prod_{j=1}^d\frac{1}{\sqrt{4 c^j t}}
	\exp\biggl(- \frac{(\alpha^j\Dx)^2}{2C_0 c^j t}\biggr).
\end{align}
\end{proposition}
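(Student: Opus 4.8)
The plan is to rewrite $a_\alpha(t)$ as a product of one--dimensional oscillatory integrals, deform each contour off the real axis, and then invoke Lemma~\ref{thm:oscillatory_integrand}. By Proposition~\ref{lem:green_estimates}(i) the series \eqref{eq:greensfunction} coincides with the product formula \eqref{eq:fundamentalsolution}, so it suffices to control each factor $e^{-r^j}I_{\alpha^j}(r^j)$ with $r^j=2c^jt/\Dx^2$. Starting from $I_n(r)=\frac1{2\pi}\int_{-\pi}^{\pi}e^{r\cos\theta}e^{in\theta}\,d\theta$ (which follows from the cosine representation recalled just before \eqref{eq:fundamentalsolution}, the sine part integrating to zero by oddness), using $\cos\theta-1=-2\sin^2(\theta/2)$ and substituting $\theta=2\pi\zeta$, one obtains
\begin{equation*}
	a_\alpha(t)=\frac1{\Dx^d}\prod_{j=1}^d\int_{-1/2}^{1/2}\exp\left(-\tfrac{4c^jt}{\Dx^2}\sin^2(\pi\zeta)\right)e^{2\pi i\alpha^j\zeta}\,d\zeta=\frac1{\Dx^d}\prod_{j=1}^d\int_{-1/2}^{1/2}\mathfrak{F}_0(\zeta,\alpha^j,c^j,t)\,d\zeta,
\end{equation*}
where $\mathfrak{F}_0$ is the function \eqref{eq:F_m_defin} with $m=0$.

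The central step is the contour shift. Fix $j$ and $b^j\in[-\hf,\hf]$. The map $z\mapsto\mathfrak{F}_0(z,\alpha^j,c^j,t)$ is entire, and, since $\alpha^j\in\Z$, it is $1$-periodic in $z$; hence, integrating around the boundary of the rectangle with vertices $\pm\hf$ and $\pm\hf+ib^j$, the two vertical sides cancel and Cauchy's theorem yields
\begin{equation*}
	\int_{-1/2}^{1/2}\mathfrak{F}_0(\zeta,\alpha^j,c^j,t)\,d\zeta=\int_{-1/2}^{1/2}\mathfrak{F}_0(\theta+ib^j,\alpha^j,c^j,t)\,d\theta.
\end{equation*}
Lemma~\ref{thm:oscillatory_integrand} with $m=0$ bounds the modulus of the shifted integrand by $e^{-\frac{4c^jt}{\Dx^2}\theta^2}$ times a $\theta$-independent factor of the form $\exp\left(\tfrac{C_0c^jt}{2\Dx^2}|b^j|^2-\alpha^jb^j\right)$; extending the $\theta$-integral to all of $\R$ then contributes the Gaussian factor $\int_{\R}e^{-\frac{4c^jt}{\Dx^2}\theta^2}\,d\theta=\Dx\sqrt{\pi}\,(4c^jt)^{-1/2}$. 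Taking the product over $j$, the $\Dx^d$ cancels the $\Dx^{-d}$, and \eqref{eq:a_expbound_b} follows.

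For the improved estimate \eqref{eq:a_expbound_alpha} I would minimise the free parameters in \eqref{eq:a_expbound_b}: for each $j$ the exponent $\tfrac{C_0c^jt}{2\Dx^2}|b^j|^2-\alpha^jb^j$ is a convex quadratic in $b^j$ with unconstrained minimiser $b^j=\alpha^j\Dx^2/(C_0c^jt)$, at which value it equals $-(\alpha^j\Dx)^2/(2C_0c^jt)$; inserting this choice of $b^j$ into \eqref{eq:a_expbound_b} gives \eqref{eq:a_expbound_alpha}. This minimiser lies in the admissible window $[-\hf,\hf]$ precisely when $t$ exceeds a fixed multiple of $|\alpha^j|\Dx^2/c^j$ for every $j$, and the lower bound on $t$ in the statement is exactly what guarantees this; this is why the Gaussian estimate is asserted only in that range of $t$. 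There is no deep difficulty in this argument: the analytic content sits in Lemma~\ref{thm:oscillatory_integrand}, and the only steps that require genuine care are the justification of the contour shift (entireness and $1$-periodicity of $\mathfrak{F}_0$, so that no residue terms or leftover vertical-segment contributions survive) and the check that the optimal $b^j$ stays in $[-\hf,\hf]$ under the hypothesis on $t$.
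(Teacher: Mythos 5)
Your proposal is correct and follows essentially the same route as the paper: the oscillatory-integral representation via $\mathfrak{F}_0$, the contour shift justified by entireness and $1$-periodicity, the bound from Lemma~\ref{thm:oscillatory_integrand}, the Gaussian integral over $\R$, and the optimisation of $b^j$ subject to $|b^j|\le\hf$. The only (cosmetic) difference is that you obtain the integral representation directly from the Bessel integral formula, whereas the paper derives it by Fourier-transforming the ODE system \eqref{eq:discheateq}; both yield the same integrand.
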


\begin{proof}
Following \cite{Hoff:1985zm}, from the semi-discrete heat
equation \eqref{eq:discheateq}, and
using summation-by-parts, we obtain
\begin{align*}
	\frac{\d}{\d t}\widehat{a}(t,\theta)
	&= \sum_{\alpha \in \Z^d}
	\nabla_+ \cdot \bigl(c\nabla_- a_\alpha(t)\bigr)
	 e^{2 \pi i \alpha \cdot \theta}
	= \sum_{\alpha \in \Z^d}
	a_\alpha(t)\nabla_- \cdot \bigl(c\nabla_+
	 e^{2 \pi i \alpha \cdot \theta}\bigr) \\
	& = \sum_{\alpha \in \Z^d}a_\alpha(t)
	e^{2 \pi i \alpha \cdot \theta}
	\sum_{j=1}^d c^j\frac{e^{2\pi i\theta^j} - 2
	+ e^{-2\pi i\theta^j}}{\Dx^2}\\
	& = \Biggl(\sum_{j = 1}^d 2c^j
	\frac{\cos(2\pi\theta^j)-1}{\Dx^2}\Biggr)
	\widehat{a}(t,\theta)
	= \Biggl(-\sum_{j = 1}^d \frac{4c^j}{\Dx^2}
	\sin^2(\pi \theta^j)\Biggr)
	\widehat{a}(t,\theta),
\end{align*}
where we used $\cos(2 \pi \theta^j) - 1
= 2 \sin^2(\pi \theta^j)$ in the final line.
By the fact that $\widehat{a}(0,\theta)=\Dx^{-d}$, we obtain
\begin{equation}\label{eq:w_aux2}
	\begin{split}
		\widehat{a}(t,\theta)
		&= \frac{1}{\Dx^d}\exp\Biggl(-\sum_{j = 1}^d
		\frac{4 c^jt}{\Dx^2}
		\sin^2(\pi \theta^j)\Biggr).
	\end{split}
\end{equation}
Inverting the Fourier transform,
\eqref{eq:w_aux2} yields the following oscillatory integral:
\begin{align*}
	a_\alpha(t)
	&=\frac{1}{\Dx^d} \int_{[-\hf, \hf)^d}
	\exp\Biggl(-\sum_{j = 1}^d \frac{4 c^jt}{\Dx^2}
	\sin^2(\pi \theta^j)\Biggr)
	e^{2 \pi i \alpha\cdot \theta} \,\d\theta\\
	& = \prod_{j = 1}^d \frac1\Dx \int_{-\hf}^{\hf}
	\underbrace{\exp\biggl(-\frac{4 c^jt}{\Dx^2}
	\sin^2(\pi \theta^j)\biggr)
	e^{2 \pi i \alpha^j \theta^j}}_{
	=\, \mathfrak{F}_0(\theta^j,\alpha^j,c^j,t)} \,\d\theta^j.
\end{align*}
Here, $\mathfrak{F}_0$ is defined in \eqref{eq:F_m_defin}.

We now derive precise bounds on $\abs{a_\alpha}$
in the large time regime $t \gtrsim \abs{\alpha} \Dx^2$.
Since $\alpha^j \in \Z$, $\mathfrak{F}_0$ is a $1$-periodic
function of its first argument $\theta^j$.
Moreover, allowing $\theta^j$ to range
over $\mathbb{C}$, $\mathfrak{F}_0$ is
a composition and product of
holomorphic functions and hence
a holomorphic function of $\theta^j$.
By Cauchy's integral theorem, for any $b^j \in \R$,
we can integrate on a contour around the
rectangle with real parts between $-\hf$
and $\hf $, and imaginary parts
between $0$ and $b^j$ to get:
\begin{align*}
	\int_{-\hf + ib^j}^{\hf + ib^j}
	\mathfrak{F}_0(z,\alpha^j,c^j,t)\,\d z
	&=\int_{-\hf}^{\hf}\mathfrak{F}_0(z,\alpha^j,c^j,t)\,\d z
	\\ &\quad\,\,
	+\underbrace{\int_{\hf}^{\hf + ib^j}
	+\int_{-\hf + ib^j}^{-\hf}
	\mathfrak{F}_0(z,\alpha^j,c^j,t)\,\d z}_{=\, 0},
\end{align*}
where the final two integrals sum
to nought by $1$-periodicity.

The advantage of this man{\oe}uvre is that
shifting the integration onto the line
$z \in [-\hf  + ib^j,\hf  + ib^j]$ introduces an exponential
decay in the factor  $e^{2\pi i \alpha^j z}$ for any
$b^j > 0$. Indeed, we have
the following bound for $b^j  \in [0, \hf ]$ by
Lemma \ref{thm:oscillatory_integrand}:
\begin{align*}
	\abs{\mathfrak{F}_0(\theta^j + i b^j, \alpha^j c^j, t) }
	\le \exp\bk{ \frac{4c^j t}{\Dx^2} \bk{- \bk{\theta^j}^2
	+63 \bk{b^j}^2} - \alpha^j b^j}
\end{align*}
Therefore,
\begin{align*}
	\abs{a_\alpha(t)}
	&\le \prod_{j = 1}^d \frac1\Dx
	\exp\biggl(\frac{252c^j t}{\Dx^2}
	(b^j)^2 - \alpha^j b^j \biggr)
	\int_{-\infty}^{\infty}
	\exp\bk{ -\frac{4c^j t}{\Dx^2}
	\bk{\theta^j}^2 } \,\d\theta^j
	\\ &
	=\prod_{j=1}^d \frac1\Dx
	\exp\biggl(\frac{252c^j t}{\Dx^2}(b^j)^2
	-\alpha^j b^j \biggr)
	\sqrt{\frac{\pi\Dx}{4c^jt}},
\end{align*}
which is \eqref{eq:a_expbound_b}. The above expression
is minimized when $b^j = \frac{\alpha^j \Dx^2}{C_0c^j t}$,
and if $t \ge \alpha^j \Dx^2/\bk{\min_{j \le d} 2C_0c^j}$,
then indeed $\abs{b} \le \hf$. This leads
to \eqref{eq:a_expbound_alpha}.
\end{proof}


\subsection{Comments on the heat kernel estimates
of Davies and Pang}\label{sec:daviespang}

Let us briefly compare our heat kernel estimates
\eqref{eq:pointwise_da} (for $m=0$)
with those presented in \cite{Pang:1993aa}.
Consider the one-dimensional semi-discrete heat kernel
$a_n(t)$ defined by \eqref{eq:sol-1D}
for all $n \in \Z$ and $t > 0$, which represents
the solution to the semi-discrete heat equation
\eqref{eq:sd-heat-1D}, assuming a unit diffusion
coefficient and unit grid spacing.
In \cite[Theorem 3.5]{Pang:1993aa},
the author demonstrated that there exists a
constant $C \geq 1$ such that
\begin{align}\label{eq:pangs_estimate}
	a_n(t) \leq
	\begin{cases}
		C n^{-1/2}
		\exp\Bigl(\abs{n}
		F\left(\frac{\abs{n}}{2t}\right)\Bigr)
		&\text{if }  0 < t \leq \abs{n},
		\\
		C t^{-1/2}
		\exp\Bigl(\abs{n}
		F\left(\frac{\abs{n}}{2t}\right)\Bigr)
		&\text{if }   0 < \abs{n} \leq t,
	\end{cases}
\end{align}
where $F$ is given by
\begin{align}\label{eq:dp_F}
	F(\gamma)
	=-\log\Bigl(\gamma+\sqrt{\gamma^2+1}\Bigr)
	+\frac{\sqrt{\gamma^2+1}-1}{\gamma},
	\quad \gamma>0.
\end{align}
We have
$F(\gamma) \leq -\frac{\gamma}{2} + \frac{\gamma^3}{20}$,
which for small $\gamma$ leads to the usual
Gaussian upper bound
\begin{equation}\label{eq:pang-exp-est}
	a_n(t) \lesssim
	t^{-1/2}e^{-\abs{\alpha}^2/(4t)}
	\qquad \text{for $t \gg \abs{n}$}.
\end{equation}
Additionally, $F(\gamma) \leq
-\log\Bigl(\frac{2\gamma}{e}\Bigr)$,
implying the following
non-Gaussian (log-corrected) behaviour:
\begin{equation}\label{eq:log-est}
    a_n(t) \lesssim t^{-1/2}
    \exp\Bigl(-\abs{n}
    \log\Bigl(\tfrac{\abs{n}}{e t}\Bigr)\Bigr)
    \lesssim t^{-1/2}
    \left(\frac{et}{\abs{n}}\right)^{\abs{n}},
\end{equation}
for $t>0$ and $n\in \Z \setminus \{0\}$.
This indicates a significant difference in the
short-time asymptotics between the continuous
and semi-discrete heat kernels.

There is a straightforward
probabilistic interpretation of
the leading order term in \eqref{eq:log-est}.
Consider a continuous-time random walk on $\Z$
that starts at the origin, with a $\hf$ probability
of jumping to either neighbour. The corresponding
jump probabilities of this Markov process are
given by $a_n(t)$.  Because at each state the walk
can jump in two equally likely directions,
each with an ``exponential clock" of rate $1$,
these rates add up to yield a total jump rate of $2$ at every state.
It follows that in a small time increment $\Delta t$ the probability
of a jump is $1 - e^{-2\Delta t} \approx 2\Delta t$, and hence
partitioning $[0,t]$ into $|n|$ intervals,
each of length $\Delta t = t/|n|$,  yields an approximate
jump probability of $\bigl(2\Delta t\bigr)^{|n|}
=\bigl(2t/|n|\bigr)^{|n|}$. This aligns with the last term
in \eqref{eq:log-est}, here with $2$ instead of $e$.

Next, let us examine the solution $a_n^{(1)}(t)$
of the semi-discrete heat equation
\eqref{eq:sd-heat-c-1D} on $\Dx \Z$ with $c=1$
and $\Dx>0$. Recall from \eqref{eq:sol-1D-c} that
$$
a_n^{(1)}(t)=\frac{1}{\Dx}a_n(2t/\Dx^2),
$$
where $a_n(t)$ is the solution of
\eqref{eq:sd-heat-1D} discussed above.
Directly rescaling
\eqref{eq:pang-exp-est} gives a Gaussian estimate
$a_n^{(1)}(t) \lesssim t^{-1/2}
e^{-\abs{x_n}^2/(8t)}$
for large times $t \gg \abs{x_n}\Dx$, $x_n=n\Dx$,
and $n\in \Z\setminus \{0\}$.
This can be compared to our
\eqref{eq:a_expbound_alpha}, which
holds for $t\ge|x_n|\Dx$ and $n\in \Z$.
For small times, specifically when $0 < t < |x_n| \Delta x$
(see the first part of \eqref{eq:pangs_estimate}),
rescaling \eqref{eq:log-est} yields the estimate:
$a_n^{(1)}(t) \lesssim t^{-1/2}
\left(\frac{2et}{|x_n| \Delta x}\right)^{|n|}$,
valid for $t > 0$ and $n \in \Z \setminus \{0\}$.

Considering very small $t$, say $t < \Delta x / (2e)$,
we obtain $a_n^{(1)}(t) \lesssim t^{-1/2} |x_n|^{-|n|}$.
This expression demonstrates strong decay as
$|n| \to \infty$ for fixed $t$, along with a $t^{-1/2}$
behavior as $t \to 0$ for finite, nonzero $n$.
This should be compared with our
\eqref{eq:pointwise_da} (for $m=0$)---see
also \eqref{eq:pointwise_da-intro}
and the accompanying discussion---which
holds for all $n\in \Z$ but has a weaker
spatial decay. Indeed,
\eqref{eq:pangs_estimate} does not
address the case $n = 0$, which particularly
implies the absence of a bound at the origin for
small times. On the other hand, our bound in
\eqref{eq:pointwise_da} encompasses this scenario, providing
the correct $\frac{1}{\Delta x}$ bound at the origin.

Therefore, we will rely on \eqref{eq:pointwise_da} in
constructing the parametrix in the following sections.
Although \eqref{eq:pointwise_da} is sub-optimal regarding
its $n$-decay within the regime covered by \eqref{eq:pangs_estimate}, it
remains preferable due to its coverage of the $n = 0$ case.
Additionally, computing the Neumann series in
the parametrix approach, using the heat kernel
bound \eqref{eq:pangs_estimate} and the
function $F$ defined in \eqref{eq:dp_F},
currently presents challenges.

\section{Variable coefficients}\label{sec:variable_coeff}
Let $\bm{f}\from[0,\infty)\to L^\infty_\Dx(\R^d)$
and $\bm{\psi}, \bm{c}^1, \ldots,
\bm{c}^d \in L_\Dx^\infty(\R^d)$ be
given grid functions and define
${\bm c}= \text{diag}(\bm{c}^1, \ldots, \bm{c}^d)$.
Assume that $\inf_{\alpha,j}c_\alpha^j>0$, and that
\begin{align}\label{eq:smoothrho}
	\frac{\bigl|c_\alpha^j - c_\beta^j\bigr|}
	{\bigl|x_\alpha - x_\beta\bigr|}\le C,
	\quad j=1,\ldots d,
\end{align}
for a constant $C$ independent of $\alpha, \beta, j$.
We keep the convention that the superscript
$j = 1, \ldots, d$ of $c_\alpha^j$
are indices and not exponents. We consider here the anisotropic
\emph{variable coefficient} semi-discrete heat equation
\begin{equation}\label{eq:heat_variable_coeff}
	\begin{cases}\displaystyle
		\frac{\d}{\d t}  u_\alpha
		-\sum_{j = 1}^dc_\alpha^j
		\nabla_+^j \nabla_-^ju_\alpha
		=f_\alpha, & \alpha\in\Z^d,\ t>0,
		\\
		u_\alpha(0) = \psi_\alpha, & \alpha\in\Z^d.
	\end{cases}
\end{equation}

We will use the estimates provided
in Section \ref{sec:greens_functions}
in the variable coefficient case by
fixing the coefficient argument $\alpha\in \Z^d$ of the
coefficient $c_\alpha^j$ at a fixed index $\beta$.
We then get a constant coefficient
heat operator, and a resultant first
approximation $a_{\alpha,\beta}$
to the fundamental solution of the
full variable coefficient operator.

For functions $a\from \Z^d \times \Z^d \to \R$
mapping $(\alpha, \beta)$ to $a_{\alpha,\beta}$, we
will always consider $\beta$ as a fixed parameter,
allowing the difference operators
to act only on $\alpha$.
For ${\bm c}\in L^\infty_\Dx(\R^d)$ as
in \eqref{eq:smoothrho}, we define
$a=a_{\alpha, \beta}(t)$ as follows:
For fixed $\beta\in\Z^d$, the map
$(\alpha, t)\mapsto a_{\alpha,\beta}(t)$
is the fundamental solution (heat kernel) of
the following equation with
constant coefficients $c_\beta^{(1)},
\ldots,c_\beta^{(d)}$:
\begin{align}\label{eq:frozencoeff}
\frac{\d}{\d t}   u_\alpha
-\sum_{j=1}^d c_\beta^j
\nabla_+^j \nabla_-^ju_\alpha=0,
\quad
(t,\alpha)\in (0,T]\times \Z^d.
\end{align}
The theory presented
in Section \ref{sec:greens_functions}
establishes the existence, uniqueness, and
diverse properties of the heat kernel
$(\alpha, t)\mapsto a_{\alpha,\beta}(t)$
for each fixed $\beta$.

We will begin by quantifying the error
made by freezing $\alpha$ in
the argument of ${c_\alpha^j}$ at
$\alpha=\beta$. The next lemma offers an estimate
of the semi-discrete counterpart of the
continuous object \eqref{eq:intro-Phi}, which
characterizes the error between the parametrix and the
actual heat kernel (see \eqref{eq:intro-error}
and \eqref{eq:intro-parametrix}).

\begin{lemma}[Auxiliary estimates]
\label{thm:K_estimates_lemma}
Fix $\beta \in \Z^d$.
Define the auxiliary kernel
\begin{align}\label{eq:K_defin}
	K_{\alpha, \beta}(t)
	&\coloneqq \sum_{j = 1}^d \big(c_\alpha^j-c_\beta^j\big)
	\nabla_+^j \nabla_-^j{\, a_{\alpha -\beta, \beta}(t)},
\end{align}
where the difference operators $\nabla_\pm$
act only on the $\alpha$ subscript. If
$Z(\alpha)$ denotes the number of entries
of $\alpha \in \Z^d$ that are zero, we have
\begin{equation}\label{eq:K_estimate}
	\begin{aligned}
		|K_{\alpha, \beta}(t)|
		&\lesssim \bk{\frac1{\sqrt{2\overline{c}}}
		\wedge \frac{ t^{1/2}}{\Dx}}^{Z(\alpha - \beta)}
		t^{-\bk{d + 1}/2} \prod_{j = 1}^d
		\bigg(1 + \frac{\big|x_\alpha^j
		-x_\beta^j\big|^2}{2\overline{c}t}\bigg)^{-1},
	\end{aligned}
\end{equation}
where  $\overline{c} = \norm{\bm c}_{L^\infty(\R^d)}$,
and the implicit constant is independent of
$\alpha$, $\beta$, $\Dx$, and $t \in [0,\infty)$.
\end{lemma}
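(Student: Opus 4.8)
The plan is to reduce the bound on $K_{\alpha,\beta}(t)$ to the pointwise heat kernel estimates already proved in Proposition~\ref{lem:green_estimates} together with the Lipschitz hypothesis \eqref{eq:smoothrho} on the coefficients. Fix $\beta\in\Z^d$. For each $j$, the factor $\nabla_+^j\nabla_-^j a_{\alpha-\beta,\beta}(t)$ is the second-order difference (in the $\ell=j$ direction) of the constant-coefficient heat kernel with diffusion matrix $c_\beta=\diag(c_\beta^1,\dots,c_\beta^d)$, evaluated at the shifted index $\alpha-\beta$. By the remark following Proposition~\ref{lem:green_estimates} (specifically \eqref{eq:a_F_bound3}, with $\overline{c}$ there being $\max_j c_\beta^j \le \norm{\bm c}_{L^\infty}=\overline c$), we have
\begin{align*}
	\bigl|\nabla_+^j\nabla_-^j a_{\alpha-\beta,\beta}(t)\bigr|
	\lesssim \biggl(\frac1{\sqrt{2\overline c}}\wedge\frac{t^{1/2}}{\Dx}\biggr)^{Z(\alpha-\beta)}
	t^{-(d+2)/2}\prod_{i=1}^d\biggl(1+\frac{|x_\alpha^i-x_\beta^i|^2}{2\overline c t}+\frac{|x_\alpha^i-x_\beta^i|^3}{|2\overline c t|^{3/2}}\biggr)^{-1},
\end{align*}
and this bound is uniform in $\beta$ because the implicit constant in Proposition~\ref{lem:green_estimates} depends only on $d$ and on upper/lower bounds for the $c_\beta^i$, which are controlled uniformly by $\inf_{\alpha,j}c_\alpha^j>0$ and $\overline c$.

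Next I would exploit the Lipschitz bound. For the $j$th summand in \eqref{eq:K_defin} we estimate $|c_\alpha^j-c_\beta^j|\le C|x_\alpha-x_\beta|$ by \eqref{eq:smoothrho}, and then dominate $|x_\alpha-x_\beta|\le\sum_{i=1}^d|x_\alpha^i-x_\beta^i|$. The key mechanism is that each term $|x_\alpha^i-x_\beta^i|$ can be absorbed into the corresponding cubic Lorentzian factor: writing $\rho_i:=|x_\alpha^i-x_\beta^i|/\sqrt{2\overline c t}$, one has
\begin{align*}
	\rho_i\,\bigl(1+\rho_i^2+\rho_i^3\bigr)^{-1}\lesssim \bigl(1+\rho_i^2\bigr)^{-1},
\end{align*}
since $\rho_i(1+\rho_i^2+\rho_i^3)^{-1}\sim \rho_i$ for $\rho_i\lesssim1$ and $\sim\rho_i^{-2}$ for $\rho_i\gtrsim1$, both of which are $\lesssim(1+\rho_i^2)^{-1}$ up to a constant. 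Thus a factor $|x_\alpha^i-x_\beta^i|=\sqrt{2\overline c t}\,\rho_i$ costs one power of $t^{1/2}$ (absorbed into $t^{-(d+2)/2}\cdot t^{1/2}=t^{-(d+1)/2}$) and degrades the $i$th Lorentzian factor from the cubic form to the quadratic form $(1+\rho_i^2)^{-1}$, while leaving the other factors with a cubic tail, which is in particular $\lesssim(1+\rho_\ell^2)^{-1}$ for $\ell\ne i$. Summing over the finitely many $i$ and over $j=1,\dots,d$ produces exactly the right-hand side of \eqref{eq:K_estimate}; the $Z(\alpha-\beta)$-prefactor is untouched since it does not depend on $j$ or $i$.

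The only genuinely delicate point is bookkeeping: after multiplying by $|x_\alpha^i-x_\beta^i|$ one direction carries a quadratic tail and the remaining $d-1$ directions carry cubic tails, and one must check the product of these mixed factors is bounded by $\prod_{i=1}^d(1+\rho_i^2)^{-1}$ uniformly — this is immediate since $(1+\rho^2+\rho^3)^{-1}\le(1+\rho^2)^{-1}$ pointwise. I would also note the (harmless) case distinction for $Z$: when $\alpha=\beta$ the estimate is consistent with the $n=0$ convention in Proposition~\ref{lem:green_estimates}, and when $c_\alpha^j=c_\beta^j$ the $j$th summand simply vanishes. No obstacle of substance arises; the content is entirely in having the sharp second-difference bound \eqref{eq:a_F_bound3} with its cubic Lorentzian tail available, which is what makes the Lipschitz gain in the coefficient convert cleanly into the extra half power of $t$ in \eqref{eq:K_estimate}.
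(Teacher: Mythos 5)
Your proposal is correct and follows essentially the same route as the paper: both invoke the second-difference bound \eqref{eq:a_F_bound3} for the frozen-coefficient kernel and then use the Lipschitz hypothesis \eqref{eq:smoothrho} to produce a factor $|x_\alpha-x_\beta|$, which is absorbed by the cubic term of the Lorentzian at the cost of one power of $t^{1/2}$, degrading the tail from cubic to quadratic and turning $t^{-(d+2)/2}$ into $t^{-(d+1)/2}$. The only cosmetic difference is that you bound $|x_\alpha-x_\beta|$ by $\sum_i|x_\alpha^i-x_\beta^i|$ and absorb componentwise, whereas the paper uses $\max_k|x_\alpha^k-x_\beta^k|$; these are equivalent up to dimensional constants.
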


\begin{proof}
By Proposition \ref{lem:green_estimates}
\ref{lem:green_estimates_derivative-bound},
and in particular, using \eqref{eq:a_F_bound1}
and \eqref{eq:a_F_bound3},
with $\overline{c} \coloneqq
\sup_{j,\alpha} c^j_\alpha$,
\begin{align*}
	&\abs{K_{\alpha, \beta}(t)}\\*
	&\qquad \le  \frac{|x_\alpha - x_\beta|}{\sqrt{t}} \sqrt{t}
	\max_j \frac{\bigr|c_\alpha^j
	-c_\beta^j\bigl|}{{\big|x_\alpha - x_\beta\big|}}
	\abs{ \sum_{j = 1}^d \nabla_+^j \nabla_-^j
	a_{\alpha - \beta, \beta}}\\
	&\qquad\lesssim  \frac{|x_\alpha - x_\beta|}{\sqrt{t}}
	\Bigg[
	t^{- \bk{d + 1}/2} \prod_{j = 1}^d
	{\Biggl(1 + \frac{{\big|x_\alpha^j
	-x_\beta^j\big|}^2}{2 \overline{c}t}
	+\frac{\big|x_\alpha^j - x_\beta^j\big|^3}
	{\abs{2 \overline{c}t}^{3/2}} \Biggr)}^{-1}
	\one{\{2 \overline{c} t \ge \Dx^2\}}\\*
	&\qquad\relspace + \frac{t^{- {1/2}}}{\Dx^{Z(\alpha)}}
	\prod_{j:\alpha^j \neq \beta^j} t^{-{1/2}}
	{\Biggl(1 + \frac{{\big|x_\alpha^j - x_\beta^j\big|}^2}{2 \overline{c}t}
	+\frac{{\big|x_\alpha^j - x_\beta^j\big|}^3}
	{\abs{2 \overline{c}t}^{3/2}} \Biggr)}^{-1}
	\one{\{2 \overline{c} t <  \Dx^2\}}
	\Bigg].
\end{align*}
We now use the $3/2$ order term to absorb
the factor ${|x_\alpha - x_\beta|}/{\sqrt{t}}$:
\begin{align*}
	&\frac{|x_\alpha - x_\beta|}{\sqrt{t}}
	\prod_{j : \alpha^j \neq \beta^j}
	\Biggl(1 + \frac{{\big|x_\alpha^j - x_\beta^j\big|}^2}{2 \overline{c}t}
	+ \frac{{\big|x_\alpha^j
	-x_\beta^j\big|}^3}{\abs{2 \overline{c}t}^{3/2}} \Biggr)^{-1}
	\\ & \quad
	\lesssim
	\max_{k \le d}\frac{|x_{\alpha}^k
	-x_{\beta}^k|}{\sqrt{t}}
	\prod_{j : \alpha^j \neq \beta^j}
	\Biggl(1 + \frac{{\big|x_\alpha^j
	-x_\beta^j\big|}^2}{2 \overline{c}t}
	+\frac{{\big|x_\alpha^j - x_\beta^j\big|}^3}
	{\abs{2 \overline{c}t}^{3/2}} \Biggr)^{-1}
	\\ & \quad
	\lesssim
	\prod_{j : \alpha^j \neq \beta^j}
	\Biggl( 1 + \frac{{\big|x_\alpha^j
	-x_\beta^j\big|}^2}{2 \overline{c}t} \Biggr)^{-1}.
\end{align*}
Directly from the definition of $K$, we
see that $K_{\alpha, \beta} = 0$ when $\alpha = \beta$.
Therefore, for $2 \overline{c} t \ge \Dx^2$,
our estimate on $K_{\alpha, \beta}(t)$
reduces to:
$$
\abs{K_{\alpha, \beta}(t)}
\one{\{2 \overline{c} t \ge \Dx^2\}}
\lesssim
t^{-\bk{d + 1}/2} \prod_{j = 1}^d
\Biggl(1 + \frac{{\big|x_\alpha^j
-x_\beta^j\big|}^2}{2 \overline{c} t}\Biggr)^{-1}
\one{\{ 2 \overline{c} t \ge \Dx^2,
\alpha \neq \beta\}}.
$$
On the other hand, for  $2 \overline{c} t<\Dx^2$, we get
\begin{align*}
	\abs{K_{\alpha,\beta}(t)}
	\one{\{2 \overline{c} t <  \Dx^2\}}
	\lesssim \frac{t^{-{1/2}}}{\Dx^{Z(\alpha)}}
	\prod_{j : \alpha^j \neq \beta^j} t^{-{1/2}}
	\Biggl( 1 + \frac{\big|x_\alpha^j
	-x_\beta^j\big|^2}{2 \overline{c} t}\Biggr)^{-1}
	\one{\{2 \overline{c} t < \Dx^2,\alpha \neq \beta\}}.
\end{align*}
This establishes \eqref{eq:K_estimate}.
\end{proof}

Set $K^{(1)} \coloneqq K$, and
for $m = 2, 3, \ldots$, define
\begin{equation}\label{eq:Km_defin}
	K^{(m)}_{\alpha, \beta}(t)
	\coloneqq \int_0^t
	\Bigl( K(t - s)\dconv
	 K^{(m - 1)}(s)\Bigr)_{\alpha, \beta}\,\d s.
\end{equation}
A key component in the parametrix method, as illustrated
in the continuous analog
\eqref{eq:intro-parametrix}
(see, for example, \cite[Chapter 9,
Equation (4.8)]{Fri1964}), is the following sum of
iterated ``convolutions" of $K$:
\begin{equation}\label{eq:Phi_defin}
	\Phi_{\alpha, \beta}(t)
	\coloneqq \sum_{m= 1}^\infty K^{(m)}_{\alpha, \beta}(t).
\end{equation}

One of the primary technical contributions
of this work is the following result concerning
the convergence and pointwise bound of $\Phi$.
We dedicate the entire Section \ref{sec:aux_calc}
to its proof.

\begin{proposition}\label{thm:Phi_bound}
The series \eqref{eq:Phi_defin}
converges absolutely in $L^1([0,T])$ for
every $\alpha, \beta \in \Z^d$ and every $T>0$.
Furthermore, for $t \in [0,T]$,
\begin{equation}\label{eq:Phi_ptwise}
	\begin{aligned}
		\bigl|\Phi_{\alpha, \beta}(t)\bigr|
		&\lesssim_T\bk{\frac1{\sqrt{2\overline{c}}}
		\wedge \frac{ t^{{1/2}}}{\Dx}}^{Z(\alpha)}
		t^{-\bk{d + 1}/2} \prod_{j=1}^d
		\bigg(1+\frac{{\big|x_\alpha^j
		-x_\beta^j\big|}^2}
		{2\overline{c}t}\bigg)^{-1}.
	\end{aligned}
\end{equation}
\end{proposition}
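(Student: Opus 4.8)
The plan is to run the standard parametrix bookkeeping in the semi-discrete setting, using the Lorentzian bound \eqref{eq:K_estimate} in place of the Gaussian bound and the convolution algebra of Cauchy densities to control the iterated convolutions. The key is to establish, by induction on $m$, a bound of the form
\begin{equation*}
	\bigl|K^{(m)}_{\alpha,\beta}(t)\bigr|
	\lesssim \frac{C^m}{\Gamma(m/2 + c_0)}
	\bk{\frac1{\sqrt{2\overline{c}}} \wedge \frac{t^{1/2}}{\Dx}}^{Z(\alpha-\beta)}
	t^{m/2 - (d+1)/2} \prod_{j=1}^d
	\bk{1 + \frac{|x_\alpha^j - x_\beta^j|^2}{2\overline{c}t}}^{-1},
\end{equation*}
for a fixed constant $c_0>0$ and a constant $C=C(d,\overline{c},\text{Lip}(c))$ depending on the data in \eqref{eq:smoothrho}. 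Summing this over $m\ge 1$ gives a series in which the worst term is $m=1$; the $t^{m/2}$ gain and the factorial denominator (coming from the Beta-function identity $\int_0^t (t-s)^{a-1} s^{b-1}\,ds = t^{a+b-1} B(a,b)$) make the series converge absolutely in $L^1([0,T])$ and reproduce \eqref{eq:Phi_ptwise}. This is exactly the discrete analogue of \cite[Chapter~9]{Fri1964}.

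The first concrete step is the base case $m=1$, which is precisely Lemma \ref{thm:K_estimates_lemma}. For the inductive step I would write
\begin{equation*}
	K^{(m+1)}_{\alpha,\beta}(t) = \int_0^t \sum_{\eta \in \Z^d}
	K_{\alpha,\eta}(t-s)\, K^{(m)}_{\eta,\beta}(s)\, \Dx^d \, \d s,
\end{equation*}
insert the bound for $K$ and the inductive bound for $K^{(m)}$, and reduce everything to estimating, in each coordinate $j$ separately, a discrete spatio-temporal convolution of two factors of the shape $\tau^{-\text{(power)}}(1 + |y|^2/(2\overline{c}\tau))^{-1}$. The point of the paper's emphasis on Cauchy densities is that $(1 + |y|^2/(2\overline{c}\tau))^{-1} = \pi\sqrt{2\overline{c}\tau}\,\mathcal{L}(y; 0, \sqrt{2\overline{c}\tau})$ up to constants, so the spatial sum over $\eta$ is, up to the replacement of a sum by an integral (controlled by monotonicity/comparison as in \eqref{eq:comp-int-test}), a convolution of two Cauchy densities, which by \cite[p.~51]{Feller:1966aa} is again Cauchy with scale $\sqrt{2\overline{c}(t-s)} + \sqrt{2\overline{c}s} \lesssim \sqrt{2\overline{c}t}$. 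This collapses the $\eta$-sum and leaves a pure time integral $\int_0^t (t-s)^{-1/2+\cdots} s^{m/2-1/2+\cdots}\,\d s$ handled by the Beta identity, producing the $t^{(m+1)/2}$ power and the advance $\Gamma(m/2+c_0)\to\Gamma((m+1)/2+c_0)$ in the denominator.

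Two points require care beyond routine calculation. First, the factor $\bigl(\tfrac1{\sqrt{2\overline{c}}}\wedge t^{1/2}/\Dx\bigr)^{Z(\alpha-\beta)}$ must be propagated correctly: in the regime $2\overline{c}t<\Dx^2$ the Cauchy-density heuristic degenerates (the kernel concentrates at the origin with the $1/\Dx$ normalisation), and one must check the $\eta=\alpha$ and $\eta=\beta$ contributions separately, using the $L^1$-contractivity from Proposition \ref{lem:green_estimates}\ref{lem:green_estimates_positivity} and the explicit small-time bounds, exactly as in the treatment of the exceptional cases in the proof of Proposition \ref{lem:green_estimates}. This is where I expect the main obstacle to lie, and it is presumably the reason Section \ref{sec:aux_calc} is devoted entirely to the argument. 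Second, one must verify that replacing the discrete sums $\sum_\eta \Dx^d$ by integrals $\int \d y$ costs only a $\Dx$-independent constant; this follows from the integral comparison test used in \eqref{eq:comp-int-test} together with the fact that the relevant one-variable profiles $(1+y^2+|y|^3)^{-1}$ are (up to shifts) unimodal, so that discretisation error is uniformly controlled. Once the induction is closed, absolute convergence in $L^1([0,T])$ is immediate from $\sum_m C^m/\Gamma(m/2+c_0) < \infty$, and summing the pointwise bounds gives \eqref{eq:Phi_ptwise} with an implicit constant depending on $T$ through $\sum_m C^m T^{m/2}/\Gamma(m/2+c_0)$.
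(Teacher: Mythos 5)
Your roadmap coincides with the paper's actual argument: base case from Lemma \ref{thm:K_estimates_lemma}, an induction on $m$ in which the spatial sum is controlled by the closure of Cauchy densities under convolution and the time integral by the Beta-function identity, a $\Gamma(m/2)$-type denominator forcing convergence, and dominated convergence for the $L^1([0,T])$ statement. (One indexing slip: your inductive bound should carry $t^{(m-1)/2-(d+1)/2}$ rather than $t^{m/2-(d+1)/2}$, since at $m=1$ Lemma \ref{thm:K_estimates_lemma} supplies only $t^{-(d+1)/2}$.)

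The gap is that the single step on which everything rests --- the closure of the bound class under one discrete spatio-temporal convolution, i.e.
\begin{equation*}
	\sum_{\eta\in\Z^d} f(t-s,\alpha-\eta)\,f(s,\eta-\beta)\,\Dx^d
	\lesssim \frac{\sqrt{t}}{\sqrt{s(t-s)}}\,f(t,\alpha-\beta),
\end{equation*}
with the prefactor $\bk{1\wedge t^{1/2}/\Dx}^{Z(\alpha-\beta)}$ intact and a constant independent of $\Dx$ --- is asserted but not proven, and it is not routine. You correctly flag it as the main obstacle, but the two devices you offer do not suffice as stated. First, the sum-to-integral comparison: the summand $g(z)=\tilde L(t-s,x_\alpha-z)\,\tilde L(s,z-x_\beta)$ is in general bimodal and, near its peaks, has height of order $1/\sqrt{s(t-s)}$ over a width of order $\sqrt{s}\wedge\sqrt{t-s}$, which can be much narrower than a grid cell; a blanket comparison of the sum with the integral therefore fails precisely at $\eta=\alpha$ and $\eta=\beta$ when $s$ or $t-s\lesssim\Dx^2$. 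The paper isolates those two indices, proves a logarithmic-derivative bound $|(\log g)'|\le 8/\Dx$ on all remaining cells (Lemma \ref{thm:auxcalc4}) to justify the comparison there, and treats the two exceptional terms separately. Second, $L^1$-contractivity of the semigroup is not the right tool for those exceptional terms: what is actually needed is that the factors $\bk{1\wedge\sqrt{s}/\Dx}$ and $\bk{1\wedge\sqrt{t-s}/\Dx}$ carried by the $Z$-prefactors exactly compensate the $1/\sqrt{s(t-s)}$ size of the diagonal summand so as to return $\bk{1\wedge\sqrt{t}/\Dx}\tilde L(t,\cdot)$, and this requires the case-by-case analysis over $s,\,t-s,\,t\lessgtr\Dx^2$ and $t\lessgtr|x_\alpha-x_\beta|^2$ carried out in Proposition \ref{prop:auxcalc1} (the terms $A_2^j$ and $A_3^j$ there). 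Without that analysis the induction cannot be closed with a $\Dx$-independent constant, which is the entire content of the proposition.
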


\begin{proof}
By applying the bound in \eqref{eq:K_estimate} to $K$,
the convergence of $\Phi_{\alpha, \beta}(t)$---both
pointwise for $(\alpha, \beta) \in \Z^{2d}$ and
in $L^1([0,T])$---along with its pointwise bound, can
now be deduced from the upcoming
Proposition \ref{lem:auxcalc3} presented later.
\end{proof}

\begin{definition}[Fundamental solution]
\label{defin:fundsol_varicoeff}
A fundamental solution of the variable coefficient
problem \eqref{eq:heat_variable_coeff} is a function
$\bm{\Gamma}
\in C^1([0,T];L^\infty_\Dx(\R^d\times\R^d))$
that satisfies
\begin{equation*}
	\begin{cases}\displaystyle
		\frac{\d}{\d t}  \Gamma_{\alpha, \beta}(t)
		=\sum_{j = 1}^dc_\alpha^j
		\nabla_+^j \nabla_-^j \Gamma_{\alpha, \beta}(t),
		& \alpha,\beta\in\Z^d,\ t>0,
		\\ {\Gamma}_{\alpha,\beta}(0)
		=\delta_{\alpha-\beta}, & \alpha,\beta\in\Z^d.
	\end{cases}
\end{equation*}
\end{definition}

The next result establishes the existence of
and pointwise bound on a fundamental solution.

\begin{theorem}[Solution formula]
\label{lem:full_greensfunction}
Let $a_{\alpha, \beta}$ be the fundamental solution
of the equation \eqref{eq:frozencoeff}. For $t \in [0,\infty)$, the
semi-discrete parabolic problem \eqref{eq:heat_variable_coeff}
has a fundamental solution $\bm{\Gamma}$ of the form
\begin{align}\label{eq:Gamma_representation}
	\Gamma_{\alpha, \beta}(t)
	= a_{\alpha - \beta, \beta}(t)
	+ \int_0^t \sum_{\eta \in \Z^d} a_{\alpha - \eta, \eta}(t - s)
	\Phi_{\eta, \beta}(s ) \Dx^d \,\d s,
\end{align}
where $\bm{\Phi}$
is defined in \eqref{eq:Phi_defin}.
Moreover, $\bm{\Gamma}$ possesses the
following properties:
\begin{itemize}
	\item[(i)] For any $\alpha, \beta \in \Z^d$,
	$t \in [0,\infty)$, and $0 \le s \le t$,
	the fundamental solution satisfies
	the propagation relation
	\begin{align}\label{eq:fund_propa}
		\Gamma_{\alpha, \beta}(t)
		= \bigl(\Gamma(s) \dconv
		\Gamma(t-s)\bigr)_{\alpha, \beta}.
	\end{align}

	\item[(ii)] Let $Z(\alpha)$ be the number of zeros
	among the entries of $\alpha \in \Z^d$.
	 For $t \in [0,T]$, the fundamental solution satisfies the bound
	\begin{equation}\label{eq:Gamma-ab-bound}
		\bigl|\Gamma_{\alpha, \beta}(t)\bigr|
		\lesssim_T \bk{\frac1{\sqrt{2 \overline{c}}}
		\wedge \frac{ t^{{1/2}}}{\Dx}}^{Z(\alpha)}
		t^{-d/2} \prod_{j = 1}^d \Biggl(1
		+\frac{{\big|x_\alpha^j -x_\beta^j\big|}^2}
		{2\overline{c} t }\Biggr)^{-1}.
	\end{equation}

	\item[(iii)] For any $p \in [1,\infty]$, $t \in (0,T]$,
	and uniformly in $\alpha,\beta\in\Z^d$, we have
	\begin{equation}\label{eq:Gamma_bound}
		\bigl\|{\Gamma}_{\cdot, \beta}(t)\bigr\|_{L^{p'}(\R^d)}
		+\bigl\|{\Gamma}_{\alpha,\cdot}(t)\bigr\|_{L^{p'}(\R^d)}
		\lesssim_{T,p} t^{-d/(2p)},
	\end{equation}
	where $p'=p/(p - 1)$ is the dual exponent of $p$.
\end{itemize}
\end{theorem}

\begin{proof}
From the sum \eqref{eq:Phi_defin} and its
convergence, we directly have
\begin{align*}
	\Phi_{\alpha, \beta}(t)
	=K_{\alpha, \beta}(t)
	+\int_0^t \sum_{\eta \in \Z^d} K_{\alpha, \eta}(t - s)
	\Phi_{\eta, \beta}(s ) \Dx^d \,\d s.
\end{align*}
From the definition
\eqref{eq:K_defin} of $K_{\alpha,\beta}$, we can readily show
as in \cite[Lemma 4.8]{Fjordholm:2023aa} that
\begin{align*}
	0 & =\frac{\d}{\d t}  \Gamma_{\alpha, \beta}(t)
	-\sum_{j=1}^d c_\alpha^j \nabla_+^j
	\nabla_-^j \Gamma_{\alpha, \beta}(t)\\
	& = \Phi_{\alpha, \beta}(t)
	+ \sum_{j=1}^d c_\beta^j \nabla_+^j
	\nabla_-^j \,a_{\alpha -\beta, \beta}(t)
	- \sum_{j=1}^d c_\alpha^j \nabla_+^j
	\nabla_-^j a_{\alpha - \beta, \beta}(t)\\*
	&\relspace +\int_0^t \sum_{\eta \in \Z^d}
	\sum_{j=1}^d c_\eta^j \nabla_+^j
	\nabla_-^j \,a_{\alpha -\eta, \eta}(t - s)
	\Phi_{\eta, \beta}(s) \Dx^d \,\d s\\
	&\relspace -\int_0^t \sum_{\eta \in \Z^d}
	\sum_{j = 1}^dc_\alpha^j \nabla_+^j \nabla_-^j
	a_{\alpha-\eta,\eta}(t-s)
	\Phi_{\eta,\beta}(s) \Dx^d\,\d s,
\end{align*}
from which \eqref{eq:Gamma_representation} follows.
Equation \eqref{eq:fund_propa} was
shown in \cite[Lemma 4.9]{Fjordholm:2023aa}.

We can now use \eqref{eq:Gamma_representation}
to derive bounds on $\bm{\Gamma}$.
Since $\bm{\Phi}$ satisfies \eqref{eq:Phi_ptwise}, and
$t^{-\hf}\bm{a}(t)$ satisfies the same bounds by
\eqref{eq:pointwise_da} (with $m = 0$), we
can appeal to Proposition \ref{prop:auxcalc1} to get
\begin{align*}
	\sum_{\eta \in \Z^d}&
	a_{\alpha - \eta, \eta}(t - s)
	\Phi_{\eta, \beta}(s)  \Dx^d
	\\ &
	\lesssim_T  \sqrt{t - s}
	\frac{\sqrt{t}}{\sqrt{s \bk{t - s}}}
	\Biggl[\begin{aligned}[t] \biggl(\one{\{t \ge 2 \overline{c} \Dx^2\}}
	+\bk{\frac{ t^{1/2}}{\Dx}}^{Z(\alpha - \beta)}
	\one{\{t < 2 \overline{c} \Dx^2\}}\biggr)&\\
	{}\times	t^{-\bk{d + 1}/2}\prod_{j = 1}^d \Biggl(1
	+\frac{{\big|x_\alpha^j -x_\beta^j\big|}^2}{2\overline{c} t}\Biggr)^{-1}&\Biggr].
	\end{aligned}
\end{align*}
Integrating in $s$, we get:
\begin{align*}
	&\biggl|\int_0^t \sum_{\eta \in \Z^d}
	a_{\alpha - \eta, \eta}(t - s)
	\Phi_{\eta, \beta}(s ) \Dx^d \,\d s\biggr|
	\\ &
	\lesssim_T \Bigg(\one{\{t \ge 2 \overline{c} \Dx^2\}}
	+\bk{\frac{ t^{1/2}}{\Dx}}^{Z(\alpha-\beta)}
	\one{\{t < 2 \overline{c} \Dx^2\}}\Bigg)
	t^{-d/2 + 1/2} \prod_{j = 1}^d
	\bigg(1+\frac{{\big|x_\alpha^j -x_\beta^j\big|}^2}
	{2 \overline{c} t}\bigg)^{-1}.
\end{align*}
Using \eqref{eq:Gamma_representation} and the bound
for the initial term $a_{\alpha - \beta, \beta}(t)$
in \eqref{eq:pointwise_da}, we
conclude that $|\Gamma_{\alpha, \beta}(t)|$
is bounded as asserted in \eqref{eq:Gamma-ab-bound}.

Since  $t^{\hf}/\Dx \lesssim 1$
on the small time regime
$\{4 \overline{c} t < \Dx^2\}$ by definition,
\begin{equation}\label{eq:smalltime_le_largetime}
	\bk{\frac{t^{\hf}}{\Dx}}^{Z(\alpha-\beta)}
	\lesssim 1,
\end{equation}
we have the simpler inequality
\begin{align}\label{eq:Gamma_bound2}
	\bigl|\Gamma_{\alpha, \beta}(t)\bigr|
	&\lesssim_T  t^{-d/2}\prod_{j  = 1}^d \Bigg(1
	+\frac{{\big|x_\alpha^j-x_\beta^j\big|}^2}
	{2 \overline{c} t} \Bigg)^{-1}.
\end{align}
To see \eqref{eq:Gamma_bound}, we take
the $p'$th power on both sides of
\eqref{eq:Gamma-ab-bound} and sum
in $\beta$, giving
\begin{equation} \label{eq:explaining_p'}
	\begin{aligned}
		\bigl\|{\Gamma}_{\alpha,\cdot}(t)\bigr\|_{L^{p'}_\beta}
		 & \lesssim
		 t^{-d/2 + d/(2p')}
		 \Bigg[ t^{-d/2} \sum_{\beta \in \Z^d}
		 \prod_{j = 1}^d
		 \Biggl(1 + \frac{\big|x_\alpha^j
		 -x_\beta^j\big|^2}{2 \overline{c} t }
		 \Biggr)^{-p} \Bigg]^{1/p'}
		 \\ &
		 \lesssim t^{-d/2 + d/(2p')}.
	\end{aligned}
\end{equation}
With $p'=p /(p - 1)$, the exponent of
$t$ works out to be $-d/(2p)$. This establishes
the second part of \eqref{eq:explaining_p'}.
The upper bounding Lorentzian in
\eqref{eq:explaining_p'} is symmetric in the
two indices $\alpha$ and $\beta$,
which gives us the first part
of \eqref{eq:explaining_p'}.
\end{proof}

Next, we proceed to establish bounds on the spatial differences
of $\Gamma_{\alpha, \beta}$, similar to those
for $a_\alpha$ in Proposition \ref{lem:green_estimates}.

\begin{corollary}[Fundamental solution estimates]
\label{lem:full_greensfunction2}
Let $\Gamma_{\alpha, \beta}$ be as given in
Theorem \ref{lem:full_greensfunction}.
For $\ell=1,\dots,d$, recall the definition of
$\nabla_+^\ell$ in \eqref{eq:difference_operators},
and let $(\nabla_+^\ell)^m$ denote $m$
successive applications of $\nabla_+^\ell$.
Then for any $m = 0,1,\ldots$ and $t \in [0,\infty)$, we have
\begin{equation}\label{eq:Gamma-ab-ndiff-est1}
	\begin{aligned}
		\bigl|(\nabla_\pm^\ell)^m \Gamma_{\alpha, \beta}(t)\bigr|
    	&\lesssim \bk{\frac1{\sqrt{2 \overline{c}}}
    	\wedge \frac{ t^{1/2}}{\Dx}}^{Z(\alpha)}
    	\\ &
    	\relspace\times
    	t^{-\bk{d + m}/2}\prod_{j = 1}^d \Biggl(1
    	+\frac{{\big|x_\alpha^j -x_\beta^j\big|}^2}
    	{2 \overline{c}  t }\Biggr)^{-1}.
	\end{aligned}
\end{equation}
Consequently, for any $p\in (d,\infty]$ and $t \in [0,T]$, we have
\begin{equation}\label{eq:Gamma-ab-ndiff-est2}
	\sup_{\beta \in \Z^d}\bigl\|(\nabla_\pm^\ell)^m
	\bm{\Gamma}_{\cdot,\beta}(t)\bigr\|_{L^{p'}_\alpha} +
	\sup_{\alpha \in \Z^d}\bigl\|(\nabla_\pm^\ell)^m
	\bm{\Gamma}_{\alpha,\cdot}(t)\bigr\|_{L^{p'}_\alpha}
	\lesssim t^{d/(2p) - m/2}.
\end{equation}
\end{corollary}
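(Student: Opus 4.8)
The plan is to differentiate the representation \eqref{eq:Gamma_representation} in the $\alpha$-slot, bound the initial term $(\nabla_\pm^\ell)^m a_{\alpha-\beta,\beta}(t)$ by a Lorentzian profile of the asserted shape, and show that the remaining Duhamel integral contributes only a term that is lower order in $t$ as $t\to 0^+$, hence absorbed into the same bound on a bounded interval $[0,T]$. The case $m=0$ of \eqref{eq:Gamma-ab-ndiff-est1} is \eqref{eq:Gamma-ab-bound}, and the corresponding $L^{p'}$ bound is \eqref{eq:Gamma_bound}, so from now on I take $m\ge 1$.

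The first step is to upgrade the constant-coefficient estimate \eqref{eq:pointwise_da} from $m\le 2$ to every $m\ge 0$. For $m\le 2$, \eqref{eq:pointwise_da} together with the mixed forward/backward difference variant (cf.\ \eqref{eq:a_F_bound3}) already gives, for the frozen-coefficient kernel $a_{\cdot,\beta}$ with $\overline c=\norm{\bm c}_{L^\infty(\R^d)}$,
\[
	\bigl|(\nabla_\pm^\ell)^m a_{\gamma,\beta}(t)\bigr|
	\lesssim
	\Bigl(\tfrac1{\sqrt{2\overline c}}\wedge\tfrac{t^{\hf}}{\Dx}\Bigr)^{Z(\gamma)}
	t^{-(d+m)/2}\prod_{j=1}^d\Bigl(1+\tfrac{|x_\gamma^j|^2}{2\overline c t}\Bigr)^{-1}.
\]
For $m\ge 3$ I would use the constant-coefficient semigroup identity $a_{\gamma,\beta}(t)=\bigl(a_{\cdot,\beta}(t_1)\dconv\dots\dconv a_{\cdot,\beta}(t_k)\bigr)_\gamma$ with $k\coloneqq\lceil m/2\rceil$ equal pieces $t_i=t/k$, together with the discrete summation-by-parts rule $\sum_\eta(\nabla_\pm^\ell f)_{\gamma-\eta}g_\eta\,\Dx^d=\sum_\eta f_{\gamma-\eta}(\nabla_\pm^\ell g)_\eta\,\Dx^d$: this distributes the $m$ difference operators over the $k$ factors with at most two on each. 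Each factor is then estimated at time $t_i$ by the $m\le 2$ bound above, and the $k-1$ discrete convolutions are collapsed using the convolution estimates of Section \ref{sec:aux_calc}, in particular Proposition \ref{prop:auxcalc1}, which is precisely designed to turn a product of $Z$-weighted Lorentzian profiles at times summing to $t$ into a single $Z$-weighted Lorentzian at time $t$ with the correct power of $t$. Matching powers ($\sum_i(d+m_i)/2$ against the $(k-1)d/2$ mass gained from the convolutions) reproduces $t^{-(d+m)/2}$, so the displayed bound holds for all $m$; since \eqref{eq:pointwise_da} is uniform in the coefficients, the same bound holds for $a_{\alpha-\eta,\eta}(\cdot)$ uniformly in $\eta$.

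Next I would substitute these two ingredients into \eqref{eq:Gamma_representation}. The initial term is controlled directly by the display above with $\gamma=\alpha-\beta$. For the Duhamel term I would combine the uniform bound on $(\nabla_\pm^\ell)^m a_{\alpha-\eta,\eta}(t-s)$ with the pointwise bound \eqref{eq:Phi_ptwise} on $\Phi_{\eta,\beta}(s)$ and apply Proposition \ref{prop:auxcalc1} to the $\eta$-sum exactly as in the proof of Proposition \ref{lem:full_greensfunction}, now carrying the extra factor $(t-s)^{-m/2}$. The resulting integrand is integrable in $s$ on $(0,t)$ and, after integration, produces a bound of the form $\lesssim_T(\tfrac1{\sqrt{2\overline c}}\wedge\tfrac{t^{\hf}}{\Dx})^{Z(\alpha-\beta)}\,t^{-(d+m)/2+\hf}\prod_j(1+|x_\alpha^j-x_\beta^j|^2/(2\overline c t))^{-1}$. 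Since $t^{-(d+m)/2+\hf}\le T^{\hf}\,t^{-(d+m)/2}$ for $0<t\le T$, this term is dominated by the initial term, and adding the two gives \eqref{eq:Gamma-ab-ndiff-est1}.

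Finally, \eqref{eq:Gamma-ab-ndiff-est2} follows from \eqref{eq:Gamma-ab-ndiff-est1} just as \eqref{eq:Gamma_bound} followed from \eqref{eq:Gamma-ab-bound}: discard $(\tfrac1{\sqrt{2\overline c}}\wedge\tfrac{t^{\hf}}{\Dx})^{Z(\alpha)}\lesssim 1$, raise to the power $p'$, sum the $p'$-th power of the Lorentzian product over the free index by the integral comparison test as in \eqref{eq:comp-int-test}, take the $p'$-th root, and use that the upper-bounding Lorentzian is symmetric in $\alpha\leftrightarrow\beta$ to handle both summands. The main obstacle is the upgrade to $m\ge 3$ in the first step: one cannot simply re-apply \eqref{eq:pointwise_da}, and the high-order difference $\mathcal D_{\ell,m}$ of \eqref{eq:m-deriv} is not adequately controlled by the generic Bessel bound \eqref{eq:time-decay-tmp} near the spatial origin at small times (there \eqref{eq:time-decay-tmp} is off by a factor $\Dx/\sqrt t$). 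The semigroup-splitting argument circumvents this, at the price of tracking the $Z(\cdot)$-weight through a cascade of discrete convolutions — exactly the bookkeeping that the machinery of Section \ref{sec:aux_calc} is built to absorb.
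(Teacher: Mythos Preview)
Your overall strategy matches the paper's: differentiate the representation \eqref{eq:Gamma_representation}, control the frozen-coefficient piece by \eqref{eq:pointwise_da}, control $\Phi$ by \eqref{eq:Phi_ptwise}, and feed both into the convolution machinery of Section \ref{sec:aux_calc}. Your semigroup-splitting argument to push \eqref{eq:pointwise_da} from $m\le2$ to all $m$ is a genuine addition --- the paper simply invokes \eqref{eq:pointwise_da} without commenting on this, and your argument (distribute at most two differences per factor, then collapse via Proposition~\ref{prop:auxcalc1}) does produce the right power of $t$.

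There is, however, a real gap in your treatment of the Duhamel term for $m\ge2$. After applying Proposition~\ref{prop:auxcalc1} to the $\eta$-sum you obtain an integrand of order
\[
(t-s)^{(1-m)/2}\cdot\frac{\sqrt{t}}{\sqrt{s(t-s)}}\,f(t,\alpha-\beta)
\;=\;\sqrt{t}\,(t-s)^{-m/2}\,s^{-1/2}\,f(t,\alpha-\beta),
\]
and for $m\ge2$ the factor $(t-s)^{-m/2}$ is \emph{not} integrable near $s=t$; your claim ``the resulting integrand is integrable in $s$ on $(0,t)$'' is false there. (The paper's own proof is equally sketchy on this point, so you are not alone.) The root cause is that \eqref{eq:pointwise_da} is wasteful at the spatial origin for small time: at $\gamma=0$ and $t<\Dx^2/(2\overline c)$ it gives $|(\nabla_\pm^\ell)^m a_0(t)|\lesssim \Dx^{-d}t^{-m/2}$, whereas the truth is $\lesssim\Dx^{-(d+m)}$, bounded as $t\to0$. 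One repair is to split the $s$-integral at $s=t-\Dx^2/(2\overline c)$: on the far piece use your estimate as written; on the near piece replace the $(t-s)^{-m/2}$-loss by the trivial discrete bound $|(\nabla_\pm^\ell)^m a_{\alpha-\eta,\eta}(t-s)|\lesssim\Dx^{-m}\sum_{|k|\le m}a_{\alpha-\eta+ke_\ell,\eta}(t-s)$ and then run Proposition~\ref{prop:auxcalc1} on the undifferentiated $a$, checking that the resulting $\Dx^{-m}\cdot\Dx^2/(2\overline c)$ prefactor is controlled by $t^{-(m-1)/2}$ on $\{2\overline c t\ge\Dx^2\}$ and absorbed into the $Z$-weight on $\{2\overline c t<\Dx^2\}$. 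For $m=0,1$ your argument is fine as written.
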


\begin{proof}
Using \eqref{eq:Gamma_representation},
we take the $m$th difference
in direction $\ell$ to get
\begin{align*}
	(\nabla_\pm^\ell)^m\Gamma_{\alpha, \beta}(t)
	&= (\nabla_\pm^\ell)^m a_{\alpha - \beta, \beta}(t)
	\\ &
	\relspace
	+\int_0^t \sum_{\eta \in \Z^d}(\nabla_\pm^\ell)^m
	a_{\alpha-\eta, \eta}(t - s)
	\Phi_{\eta, \beta}(s ) \Dx^d \,\d s.
\end{align*}
Similar to part 2 of the proof of
Theorem \ref{lem:full_greensfunction},
by controlling $|\bk{\nabla_\pm^\ell}^m a_{\alpha,\beta}(t)|$
using \eqref{eq:pointwise_da} and managing $\Phi$
through \eqref{eq:Phi_ptwise}, we apply
the upcoming Lemma \ref{lem:auxcalc2}.
This approach allows us to once again overestimate
the small time portion of the estimate as
demonstrated in \eqref{eq:smalltime_le_largetime},
thereby obtaining
\begin{align*}
	\biggl|\int_\tau^t \sum_{\eta \in \Z^d}
	(\nabla_\pm^\ell)^m a_{\alpha - \eta, \eta}(t - s)
	\Phi_{\eta, \beta}(s ) \Dx^d \,\d s\biggr|
	\lesssim t^{- (d + m)/2}
	\prod_{j = 1}^d
	\Biggl(1 + \frac{{\big|x_\alpha^j -x_\beta^j\big|}^2}
	{2 \overline{c}t}\Biggr)^{-1},
\end{align*}
from which \eqref{eq:Gamma-ab-ndiff-est1} follows.
Calculating as in \eqref{eq:explaining_p'}, the
$L^{p’}$ control given in \eqref{eq:Gamma-ab-ndiff-est2}
directly follows from the pointwise
bound \eqref{eq:Gamma-ab-ndiff-est1}.
\end{proof}

Similar to the Duhamel representation
formula established in Lemma \ref{lem:duhamel_representation},
we can derive a solution formula for
\eqref{eq:heat_variable_coeff}
using a fixed point argument.
A detailed proof can be deduced from
\cite[Lemmas 4.11, 4.12]{Fjordholm:2023aa}.

\begin{lemma}\label{lem:duhamel_representation2}
Fix $p,q$ satisfying $p>d$ and $1/q+d/(2p)<1$.
\begin{itemize}
	\item[(i)] Let $\bm{f} \in L^{q}\bigl([0,T];L^p_\Dx(\R^d)\bigr)$
	and $\bm{\psi}  \in L^\infty_\Dx(\R^d)$.
	Then the solution $\bm{ u}$ of the
	non-homogeneous semi-discrete parabolic
	equation \eqref{eq:heat_variable_coeff} is given by
	\begin{equation*}
		u_\alpha(t)
		=\sum_{\eta \in \Z^d} \Gamma_{\alpha, \eta}(t) \psi_\eta \Dx^d
		+\int_0^t \sum_{\eta \in \Z^d}
		\Gamma_{\alpha, \eta}(t - s) f_{\eta}(s) \Dx^d \,\d s.
	\end{equation*}

	\item[(ii)]
	Let ${\bm Y} \in L^p_\Dx(\R^d)$ and ${\bm f} \in
	L^p_\Dx(\R^d) + L^\infty_\Dx(\R^d)$.
	Furthermore, suppose ${\bm \psi} \in L^\infty_\Dx(\R^d)$
	with $\nabla_+ \nabla_- {\bm \psi} \in L^\infty_\Dx(\R^d)$.
	There exists a unique solution ${\bm u} \in C^1([0,T];L^\infty_\Dx(\R^d))$
	to the Cauchy problem
	\begin{equation*}
		\begin{cases}
			{\displaystyle \frac{\d}{\d t}  u_\alpha
			-\sum_{j = 1}^d c_\alpha^j \nabla_+^j \nabla_-^j u_\alpha
			+Y_\alpha u_\alpha = f_\alpha,} &\alpha\in\Z^d,\ t>0,
			\\
			{\bm u}(0) = {\bm \psi}.
		\end{cases}
	\end{equation*}
	The solution $\bm{u}$ is non-negative
	on $[0,\infty)\times\R^d$ and satisfies
	\begin{equation}\label{eq:phi-bounds}
		\|\bm{u}\|_{L^\infty([0,T]\times \R^d)},
		\quad \|\nabla_\pm \bm{ u}\|_{L^\infty([0,T]\times \R^d)}
		\lesssim 1,
	\end{equation}
	where the implicit constant depends only on the values
	$T$, ${\bm c}$,  $\|{\bm Y}\|_{L^p(\R^d)}$,
	and $\norm{\bm f}_{L^p(\R^d) + L^\infty(\R^d)}$---in
	particular, there is no dependence on
	$\norm{{\bm Y}}_{L^\infty(\R^d)}$.
\end{itemize}
\end{lemma}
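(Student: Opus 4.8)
The plan is to treat the two parts separately, building throughout on the fundamental solution $\bm\Gamma$ and the estimates \eqref{eq:Gamma-ab-bound}, \eqref{eq:Gamma_bound} and \eqref{eq:Gamma-ab-ndiff-est2}. For part (i) I would verify directly that the claimed Duhamel formula is well defined, solves \eqref{eq:heat_variable_coeff}, and is the unique such solution. Well-definedness and time-continuity of both terms follow from $\|\Gamma_{\alpha,\cdot}(\tau)\|_{L^{p'}(\R^d)}\lesssim_T\tau^{-d/(2p)}$ (from \eqref{eq:Gamma_bound}), Young's inequality in space (Lemma \ref{lem:young}), and Hölder's inequality in time, the condition $1/q+d/(2p)<1$ being precisely what puts $\tau\mapsto\tau^{-d/(2p)}$ into $L^{q'}_{\loc}$. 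That $\bm u$ satisfies the equation comes from differentiating under the sum and the integral---legitimated by the difference bounds \eqref{eq:Gamma-ab-ndiff-est2}, where the standing hypothesis $p>d$ keeps the resulting singular integrals convergent---together with the defining property of $\bm\Gamma$ in Definition \ref{defin:fundsol_varicoeff}; the boundary term $\Gamma_{\alpha,\eta}(0^+)=\delta_{\alpha-\eta}$ produces the inhomogeneity $f_\alpha$. For uniqueness, the difference $\bm w$ of two solutions solves the homogeneous problem with $\bm w(0)=0$; pairing with $w_\alpha\Dx^d$, summing in $\alpha$, using summation by parts and the Lipschitz bound \eqref{eq:smoothrho} to absorb the commutator of $c_\alpha^j$ with the difference operators, one obtains $\tfrac{\d}{\d t}\|\bm w(t)\|_{L^2(\R^d)}^2\lesssim\|\bm w(t)\|_{L^2(\R^d)}^2$, so Grönwall forces $\bm w\equiv0$.

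For part (ii) I would absorb the zeroth-order term into the source and run a contraction argument: $\bm u$ solves the stated Cauchy problem if and only if it is a fixed point of
$$\mathcal T[\bm u]_\alpha(t)\coloneqq\sum_{\eta\in\Z^d}\Gamma_{\alpha,\eta}(t)\psi_\eta\Dx^d+\int_0^t\sum_{\eta\in\Z^d}\Gamma_{\alpha,\eta}(t-s)\bigl(f_\eta(s)-Y_\eta u_\eta(s)\bigr)\Dx^d\,\d s.$$
Since $\bm Y\in L^p_\Dx(\R^d)$ is in particular bounded, writing $\bm f=\bm f_1+\bm f_\infty$ with $\bm f_1\in L^p_\Dx$, $\bm f_\infty\in L^\infty_\Dx$, and using $\|\bm Y\bm u\|_{L^p}\le\|\bm Y\|_{L^p}\|\bm u\|_{L^\infty}$, the bounds $\|\Gamma_{\alpha,\cdot}(\tau)\|_{L^{p'}}\lesssim_T\tau^{-d/(2p)}$ (with $d/(2p)<1$ as $p>d$) and $\|\Gamma_{\alpha,\cdot}(\tau)\|_{L^1}\lesssim_T1$ show $\mathcal T$ maps $C([0,T_0];L^\infty_\Dx(\R^d))$ into itself and contracts for small $T_0$ (contraction factor $\sim\|\bm Y\|_{L^p}\int_0^{T_0}\tau^{-d/(2p)}\,\d\tau$); patching finitely many intervals yields a solution on $[0,T]$, unique by an energy estimate as in part (i). The bound $\|\bm u\|_{L^\infty([0,T]\times\R^d)}\lesssim1$ then follows by feeding $\|\bm u(t)\|_{L^\infty}\le C\bigl(\|\bm\psi\|_{L^\infty}+\|\bm f\|_{L^p+L^\infty}+\|\bm Y\|_{L^p}\int_0^t(t-s)^{-d/(2p)}\|\bm u(s)\|_{L^\infty}\,\d s\bigr)$ into a singular Grönwall inequality, the constant depending on $\bm Y$ only through $\|\bm Y\|_{L^p}$---this is the point of keeping $\bm Y\bm u$ in the $L^p$-slot rather than the $L^\infty$-slot. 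Non-negativity I would get from a discrete maximum principle: with $M\coloneqq\|\bm Y\|_{L^\infty(\R^d)}$, the function $w_\alpha\coloneqq e^{Mt}u_\alpha$ solves $\tfrac{\d}{\d t}w_\alpha=\sum_jc_\alpha^j\nabla_+^j\nabla_-^jw_\alpha+(M-Y_\alpha)w_\alpha+e^{Mt}f_\alpha$, where $M-Y_\alpha\ge0$ and the generator is off-diagonally nonnegative, so (via the associated positivity-preserving semigroup together with Duhamel) $\bm w$, hence $\bm u$, stays nonnegative when $\bm\psi,\bm f\ge0$.

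The step I expect to be the main obstacle is the gradient bound $\|\nabla_\pm\bm u\|_{L^\infty([0,T]\times\R^d)}\lesssim1$, uniformly down to $t=0$ and $\Dx$-independently. Applying $\nabla_\pm^\ell$ to the Duhamel formula, the source integral is harmless because $\|\nabla_\pm^\ell\Gamma_{\alpha,\cdot}(\tau)\|_{L^{p'}}\lesssim_T\tau^{-d/(2p)-1/2}$ by \eqref{eq:Gamma-ab-ndiff-est2} and $d/(2p)+1/2<1$ precisely when $p>d$; the delicate term is $\nabla_\pm^\ell\sum_\eta\Gamma_{\alpha,\eta}(t)\psi_\eta\Dx^d$, for which the crude estimate $\|\nabla_\pm\Gamma_{\alpha,\cdot}(t)\|_{L^1}\|\bm\psi\|_{L^\infty}\lesssim t^{-1/2}\|\bm\psi\|_{L^\infty}$ is not uniform at $t=0$. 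I would split $\bm\Gamma$ via \eqref{eq:Gamma_representation}: the remainder-integral part contributes a bounded $L^1$ kernel (the Beta integral $\int_0^t(t-s)^{-1/2}s^{-1/2}\,\d s$ is finite), while for the leading term $\nabla_\pm^\ell\sum_\eta a_{\alpha-\eta,\eta}(t)\psi_\eta\Dx^d$ one transfers one difference onto $\bm\psi$ by shifting the convolution index in the first slot and uses $\|\bm a(t)\|_{L^1}\le1$ together with the $\Dx$-independent discrete interpolation inequality $\|\nabla_\pm\bm\psi\|_{L^\infty}^2\lesssim\|\bm\psi\|_{L^\infty}\|\nabla_+\nabla_-\bm\psi\|_{L^\infty}$---this is exactly where the hypothesis $\nabla_+\nabla_-\bm\psi\in L^\infty_\Dx(\R^d)$ is used. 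With these ingredients in place the argument follows the lines of \cite[Lemmas 4.11, 4.12]{Fjordholm:2023aa}.
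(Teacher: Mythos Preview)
Your overall architecture---Duhamel for (i), a contraction on $C([0,T_0];L^\infty_\Dx)$ for (ii), then Gr\"onwall for $\|\bm u\|_{L^\infty}$---matches the paper's sketch, which likewise runs a fixed-point argument on a short interval and extracts the $L^\infty$ bound from the resulting Duhamel representation via Young and Gr\"onwall.

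The substantive divergence is in the gradient bound, and here your plan has a gap. You want to handle the initial-data contribution by splitting $\bm\Gamma$ via \eqref{eq:Gamma_representation} and, for the leading piece $\nabla_\pm^\ell\sum_\eta a_{\alpha-\eta,\eta}(t)\psi_\eta\Dx^d$, ``shifting the convolution index in the first slot'' to move the difference onto $\bm\psi$. But $a_{\alpha-\eta,\eta}$ is \emph{not} a convolution kernel in $\eta$: the second slot carries the frozen coefficient $c_\eta$. The index shift $\eta\mapsto\eta-e_\ell$ therefore produces, besides the term $\sum_\eta a_{\alpha-\eta,\eta}(\psi_{\eta+e_\ell}-\psi_\eta)$ you want, a commutator $\sum_\eta\bigl(a_{\alpha-\eta,\eta+e_\ell}-a_{\alpha-\eta,\eta}\bigr)\psi_{\eta+e_\ell}$ that you have not accounted for. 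Controlling this would require a $\Dx$-uniform bound on the variation of $a_{\cdot,\beta}$ in the parameter $\beta$, which the paper never establishes and which is not immediate from the Bessel representation.

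The paper sidesteps this entirely by a cleaner device: set $\tilde{\bm u}\coloneqq\bm u-\bm\psi$, so that $\tilde{\bm u}(0)=0$ and $\tilde{\bm u}$ solves the same variable-coefficient equation with right-hand side $\tilde f_\alpha=\sum_j c_\alpha^j\nabla_+^j\nabla_-^j\psi_\alpha+Y_\alpha\psi_\alpha-f_\alpha\in L^\infty_\Dx+L^p_\Dx$ (this is where $\nabla_+\nabla_-\bm\psi\in L^\infty_\Dx$ is used). Now the Duhamel formula for $\tilde{\bm u}$ has no initial-data term at all, and applying $\nabla_\pm^\ell$ leaves only source integrals governed by $\|\nabla_\pm^\ell\Gamma_{\alpha,\cdot}(\tau)\|_{L^{p'}}\lesssim\tau^{-d/(2p)-1/2}$ and $\|\nabla_\pm^\ell\Gamma_{\alpha,\cdot}(\tau)\|_{L^1}\lesssim\tau^{-1/2}$, both integrable on $[0,T]$ since $p>d$. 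No commutators, no interpolation inequality, no splitting of $\bm\Gamma$.
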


\begin{proof}[Sketch of proof]
The first bound on \eqref{eq:phi-bounds} can be
derived as a by-product of a fixed point argument
on $L^\infty([0,h]\times \T^d)$ for sufficiently
small $h > 0$ that does not depend on the initial condition,
and gives us a Duhamel representation formula
for the solution:
\begin{equation}\label{eq:duhamel_representation2}
	\begin{split}
		u_\alpha(t)
		=~&\sum_{\eta \in \Z^d} \Gamma_{\alpha, \eta}(t)
		\psi_\eta \Dx^d
		+\int_0^t \sum_{\eta \in \Z^d}
		\Gamma_{\alpha, \eta}(t - s) Y_{\eta}(s)
		u_\eta(s) \Dx^d \,\d s
		\\ &+\int_0^t \sum_{\eta \in \Z^d}
		\Gamma_{\alpha, \eta}(t-s) f_{\eta} \Dx^d \,\d s.
	\end{split}
\end{equation}
From \eqref{eq:duhamel_representation2},
the first bound \eqref{eq:phi-bounds}
follows by Gronwall's inequality and the
discrete Young inequality (Lemma \ref{lem:young}):
\begin{align*}
\int_0^t \sum_{\eta \in \Z^d}
		\Gamma_{\alpha, \eta}(t - s) Y_{\eta}(s)
		u_\eta(s) \Dx^d \,\d s &\leq \int_0^t \sup_{\alpha \in \Z^d} \|\Gamma_{\alpha, \cdot}(t - s)\|_{L^{p'}_\eta}
        \|{\bm Y}\|_{L^p_x}\|{\bm u}(s)\|_{L^\infty_x}\,\d s,\\
\int_0^t \sum_{\eta \in \Z^d}
		\Gamma_{\alpha, \eta}(t-s) f_{\eta} \Dx^d \,\d s
  &\le \sup_{\alpha \in \Z^d} \|\Gamma_{\alpha, \cdot}\|_{L^1_t L^{p'}_\eta}
    \|{\bm f}\|_{L^p_x}.
\end{align*}

To get the gradient bound, we consider the equation
for $\tilde{\bm u} \coloneqq {\bm u} - {\bm \psi}$. This function
satisfies
\begin{align*}
	&\frac{\d }{\d t} \tilde{u}_\alpha
	- \sum_{j = 1}^d c_\alpha^j
	\nabla_+^j\nabla_-^j \tilde{u}_\alpha
	+ Y_\alpha\tilde{u}_\alpha + \tilde{f}_\alpha = 0,
	\qquad \tilde{\bm u}(0) \equiv 0, \\
\intertext{where}
	&\tilde{f}_\alpha
	\coloneqq\sum_{j=1}^d c_\alpha^j \nabla_+^j\nabla_-^j
	\psi_\alpha+Y_\alpha \psi_\alpha,
\end{align*}
so that $\tilde{\bm f} \in L^\infty_\Dx(\R^d)
+L^p_\Dx(\R^d)$.
Therefore \eqref{eq:duhamel_representation2}
holds with $\tilde{\bm u}$ in place of ${\bm u}$
and $\tilde{\bm f}$ in place of ${\bm f}$.
Taking a difference in
\eqref{eq:duhamel_representation2}, we then get
\begin{align*}
	\norm{\nabla_+ \tilde{\bm u}}_{L^\infty_{t,x}}
	& \lesssim_T
	\norm{\nabla_+{\bm \Gamma}}_{L^1_t
	L^\infty_\alpha L^{p'}_\beta}
	\norm{\bm Y}_{L^p_x}\norm{\tilde{\bm u}}_{L^\infty_{t,x}}
	\\ & \qquad
	+\norm{\nabla_+\bm\Gamma}_{L^1_tL^\infty_\alpha
	L^1_\beta}
	\|\nabla_+ \nabla_-{\bm \psi}\|_{L^\infty_x}
	+\norm{\nabla_+\bm\Gamma}_{L^1_tL^\infty_\alpha
	 L^{p'}_\beta}
	\|{\bm Y}\|_{L^p_x}\|{\bm \psi}\|_{L^\infty_x}.
\end{align*}
By incorporating the $L^\infty_{t,x}$ bound
for $\tilde{\bm u}$ from \eqref{eq:phi-bounds},
together with the estimates for
$\nabla_+ \bm \Gamma$ in the spaces
$L^\infty_\alpha L^{p’}_\beta$ and
$L^\infty_\alpha (L^1_\beta \cap L^{p’}_\beta)$---expressed
in terms of $t^{d/(2p)-1/2}$ and $t^{-1/2}$, both of which
are integrable on $[0,T]$ since $p > d$---the second
inequality in \eqref{eq:phi-bounds} follows. These time
decay factors, $t^{d/(2p)-1/2}$ and $t^{-1/2}$, originate
from \eqref{eq:Gamma-ab-ndiff-est2}  with $m = 1$,
where the $L^1_\beta$ bound arises by taking
$p = \infty$ ($p'=1$),
yielding the decay rate $t^{-1/2}$.
\end{proof}

\section{Iterated convolutions}
\label{sec:aux_calc}

In this section,
we come to the heart of the matter,
which is the correct handling of the iterated
temporal and discrete spatial convolutions in the definition
\eqref{eq:Km_defin} of $K^{(m)}_{\alpha, \beta}$.
We provide the proof of Theorem \ref{thm:Phi_bound} 
concerning the properties of the fundamental object
$\bm{\Phi}$ defined in \eqref{eq:Phi_defin}.
This completes the construction of the fundamental
solution for the variable-coefficient, semi-discrete 
heat operator $u_\alpha \mapsto
\frac{\d}{\d t}u_\alpha
- \sum_{j=1}^d c_\alpha^j
\nabla_+^j\nabla_-^j  u_\alpha$,
see Definition \ref{defin:fundsol_varicoeff}.

Our strategy is as follows: We consider the convolution
$\bk{K(s)\dconv K^{(m - 1)}(t - s)}_{\alpha, \beta}$
at fixed times $s$ and $t - s$.
In Lemma \ref{thm:cont_convolve}, we compute the
continuous analogue of known bounds of the spatial
sum at fixed times exactly. In Lemma \ref{thm:auxcalc4},
we show how the discrete spatial convolution is bounded
by the continuous spatial convolution.
All these feed into  the key inductive
step in Proposition \ref{prop:auxcalc1}, which provide
the estimates showing how to iterate the discrete spatial sum.
In Lemma \ref{lem:auxcalc2}, we carry out the
induction on $m$ explicitly along with the temporal
convolution. Finally in Proposition \ref{lem:auxcalc3},
we wrap up the calculations by showing that
the sum  $\Phi_{\alpha, \beta}$ of
$K^{(m)}_{\alpha, \beta}$ over $m$
required in \eqref{eq:Phi_defin} converges
appropriately and satisfies the requisite pointwise
bound \eqref{eq:Phi_ptwise}.

Throughout we shall use the notation:
\begin{align*}
	\tilde{L}(t,z) \coloneqq \frac1{\sqrt{t}} \bk{1 + \frac{z^2}t}^{-1}.
\end{align*}
The symbol ``$\Gamma$" exclusively
denotes the Gamma function in this section.

\begin{lemma}[Convolution of Lorentzians]\label{thm:cont_convolve}
For any $0 \le s \le t $, $x,y \in \R$, let
\begin{align}\label{eq:integral_xt_convolve}
I(x,y,s,t) &\coloneqq\int_{\R} \tilde{L}(t - s, x - z) \tilde{L}(s, z-y)\,\d z.
\end{align}
Then
\begin{align*}
	I(x,y,s,t) =
	 \pi
	\tilde{L}\Bigl(\bk{\sqrt{t - s} + \sqrt{s}}^2\!,\, x - y\Bigr),
\end{align*}
and
\begin{align}\label{eq:Lorentz_eval}
	I(x,y,s,t)
	\le \sqrt2 \pi \tilde{L}(t, x - y).
\end{align}
\end{lemma}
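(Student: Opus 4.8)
The plan is to recognise $\tilde{L}(t,\cdot)$ as a rescaled Lorentz density and to use the fact, already recorded in the introduction, that the Cauchy/Lorentz family is closed under convolution. Since $\tilde{L}(t,z)=\frac{\sqrt{t}}{t+z^{2}}$, we have $\tilde{L}(t,z)=\pi\,\mathcal{L}(z;0,\sqrt{t})$ with $\mathcal{L}$ the density in \eqref{eq:Cauchy-density}; note that $\tilde{L}(t,\cdot)$ integrates to $\pi$, not $1$, which is exactly what will produce the constant $\pi$ in \eqref{eq:Lorentz_eval}. Consequently the integral \eqref{eq:integral_xt_convolve} equals
\[
	I(x,y,s,t)=\pi^{2}\bigl(\mathcal{L}(\cdot;0,\sqrt{t-s})*\mathcal{L}(\cdot;0,\sqrt{s})\bigr)(x-y).
\]

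Next I would invoke the convolution identity $\mathcal{L}(\cdot;0,\gamma_{1})*\mathcal{L}(\cdot;0,\gamma_{2})=\mathcal{L}(\cdot;0,\gamma_{1}+\gamma_{2})$ from \cite[page~51]{Feller:1966aa}. (To keep things self-contained one could instead use the Fourier transform $\widehat{\tilde{L}(t,\cdot)}(\xi)=\pi e^{-\sqrt{t}\,|\xi|}$, or evaluate \eqref{eq:integral_xt_convolve} directly by partial fractions in $z$ together with $\int_{\R}\tfrac{a}{a^{2}+z^{2}}\,\d z=\pi$.) Taking $\gamma_{1}=\sqrt{t-s}$, $\gamma_{2}=\sqrt{s}$ gives $I(x,y,s,t)=\pi^{2}\,\mathcal{L}\bigl(x-y;0,\sqrt{t-s}+\sqrt{s}\bigr)=\pi\,\tilde{L}\bigl((\sqrt{t-s}+\sqrt{s})^{2},x-y\bigr)$, the scale $\sqrt{t-s}+\sqrt{s}$ corresponding to the ``time'' argument $(\sqrt{t-s}+\sqrt{s})^{2}$; this is the asserted closed form.

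For the bound \eqref{eq:Lorentz_eval}, set $\tau\coloneqq(\sqrt{t-s}+\sqrt{s})^{2}=t+2\sqrt{s(t-s)}$. Since $0\le 2\sqrt{s(t-s)}\le(t-s)+s=t$ by the arithmetic--geometric mean inequality, one has $t\le\tau\le 2t$. It then suffices to compare $\tilde{L}(\tau,u)$ with $\tilde{L}(t,u)$ uniformly in $u\in\R$: whenever $t\le\tau\le 2t$,
\[
	\tilde{L}(\tau,u)=\sqrt{\tfrac{\tau}{t}}\;\frac{t+u^{2}}{\tau+u^{2}}\;\tilde{L}(t,u)\le\sqrt{2}\,\tilde{L}(t,u),
\]
because $\sqrt{\tau/t}\le\sqrt{2}$ and $\frac{t+u^{2}}{\tau+u^{2}}\le 1$. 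Combining this with the closed form yields $I(x,y,s,t)\le\sqrt{2}\,\pi\,\tilde{L}(t,x-y)$.

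I expect no serious obstacle: the convolution identity is classical and the final comparison is elementary. The one point that requires care is the bookkeeping of the normalising constant $\pi$ relating $\tilde{L}$ to an honest probability density, so that the Cauchy convolution rule is applied correctly and the factor $\pi$ in \eqref{eq:Lorentz_eval} is properly accounted for; if one follows the self-contained partial-fractions route instead, the computation is slightly longer but one still ends with the same step $t\le\tau\le 2t$.
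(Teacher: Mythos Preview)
Your argument is correct and essentially the same as the paper's: the paper also computes the convolution via the Fourier transform of the Lorentzian (your parenthetical alternative), obtaining $I(x,y,s,t)=\pi\,\tilde L\bigl((\sqrt{t-s}+\sqrt s)^2,x-y\bigr)$, and then bounds by $\sqrt2\,\pi\,\tilde L(t,x-y)$ using $t\le(\sqrt{t-s}+\sqrt s)^2\le 2t$ exactly as you do. Note that your derived closed form carries the factor $\pi$, matching the paper's proof; the $\pi$ is in fact missing from the displayed identity in the lemma statement (a typo), so your ``this is the asserted closed form'' is off by that constant, but your computation is the correct one.
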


\begin{proof}
Denote the spatial Fourier
transform on $\R$ by $\mathcal{F}$  (specifically, with 
$(\mathcal{F} f)(\xi) := \int_\R f(x) e^{-2\pi i x\xi}\,\d x$)
and its inverse by $\mathcal{F}^{-1}$.
The Fourier transform of the convolution
is the product of the Fourier transforms on $L^2(\R)$.
Using the Fourier transform we find:
\begin{equation*}
	\begin{aligned}
		I(x,y,s,t)& = \int_{\R} \tilde{L}(t - s, x - y - w) \tilde{L}(s,w)\,\d w
		\\ &
		=\mathcal{F}^{-1}\Bigl(\mathcal{F}\tilde{L}(t-s, \cdot)
		\mathcal{F} \tilde{L}(s, \cdot) \Bigr)(x-y)\\
		&= \mathcal{F}^{-1}\Bigl(\pi^2
	e^{- 2\pi(\sqrt{t - s} + \sqrt{s})\abs{\cdot}}\Bigr)(x - y)\\
	&=\mathcal{F}^{-1}\Bigl( \pi
	\mathcal{F} \tilde{L}\Bigl(\bigl(\sqrt{s}
	+ \sqrt{t - s}\bigr)^2,\cdot\Bigr)(\cdot)\Bigr)(x - y).
	\end{aligned}
\end{equation*}
We used that the Fourier transform of
the Lorentzian $\tilde L$ is
$\mathcal{F}\tilde{L}(t,\cdot)(\xi)
=\pi e^{- 2\pi\sqrt{t}\abs{\xi}}$. 
(This can be seen by taking the inverse transform:
$\tilde L(t,x)=\pi\int_\R e^{- 2\pi\sqrt{t}\abs{\xi}}
e^{2\pi i x\xi}\,\d\xi$.)
Noting that the map $\tau\mapsto\frac1{\sqrt{\tau}} \tilde{L}(\tau,z)$
is decreasing, and that $\sqrt{t - s}+\sqrt{s} \geq \sqrt{t}
\geq \frac1{\sqrt2}\bigl( \sqrt{t - s} + \sqrt{s}\bigr)$, we obtain
$$
I(x,y,s,t) \le \pi\frac{
\sqrt{s}+\sqrt{t-s}}{\sqrt{t}} \tilde{L}(t, x - y)
\le \sqrt2\pi \tilde{L}(t, x - y).
$$
\end{proof}

\begin{lemma}\label{thm:auxcalc4}
Fix any $0 < s < t\le T$,
$\alpha, \beta \in \Z$, and set
$$
g(z) \coloneqq \tilde{L}(t - s, x_\alpha - z) \tilde{L}(s, x_\beta - z),
$$
where $x_\alpha = \alpha \Dx$, $x_\beta = \beta \Dx$.
Let $I$ be as
in \eqref{eq:integral_xt_convolve}.
Then the following bound holds:
\begin{align}\label{eq:trunc_convolve}
	\sum_{\eta \in \Z\backslash \{\alpha, \beta\}} g(x_\eta) \Dx
	\le e^8  I(x_\alpha, x_\beta,s,t).
\end{align}\\
If $\alpha = \beta$ and $0 \le s, t - s
\le  C \Dx^2$ for some $C>0$, then
\begin{align}\label{eq:trunc_convolve4}
	\sum_{\eta \in \Z\backslash \{\alpha\}}g(x_\eta)\Dx
	\le  \frac{3C^2}\Dx.
\end{align}
And if $t-s, s\ge\delta$
for some $\delta > 0$, then
\begin{align}\label{eq:trunc_convolve3}
	\sum_{\eta \in \Z} g(x_\eta) \Dx
	\le \bigg[ e^8 +  \bigg(1 + \frac{\Dx^2}
	{\delta}\bigg)^2\bigg]
	I(x_\alpha, x_\beta, s,t).
\end{align}
\end{lemma}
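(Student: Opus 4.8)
The plan is to derive all three bounds from a single mechanism: comparing the Riemann sum $\sum_{\eta}g(x_\eta)\Dx$ against the continuous integral $I(x_\alpha,x_\beta,s,t)=\int_\R g(z)\,\d z$ (the two agree because $\tilde L$ is even, by \eqref{eq:integral_xt_convolve}), whose value Lemma~\ref{thm:cont_convolve} supplies exactly. The crucial observation is that, away from the two ``singular'' indices $\eta=\alpha,\beta$, each Lorentzian factor $\tilde L(\tau,\cdot)$ — of width $\sqrt\tau$, which may be far smaller than $\Dx$ — is nonetheless comparable throughout the whole cell $\cell_\eta$, so that $g(x_\eta)\lesssim g(z)$ uniformly on $\cell_\eta$ and the sum over those $\eta$ is controlled by $I$. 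The remaining one or two terms ($\eta\in\{\alpha,\beta\}$) are then estimated separately.

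For \eqref{eq:trunc_convolve} I would argue thus. If $\eta\notin\{\alpha,\beta\}$ then $|x_\alpha-x_\eta|=|\alpha-\eta|\Dx\ge\Dx$, so for $z\in\cell_\eta$ one has $|x_\alpha-z|\le|x_\alpha-x_\eta|+\tfrac\Dx2\le\tfrac32|x_\alpha-x_\eta|$, hence $(t-s)+(x_\alpha-z)^2\le\tfrac94\bigl((t-s)+(x_\alpha-x_\eta)^2\bigr)$, and therefore $\tilde L(t-s,x_\alpha-x_\eta)\le\tfrac94\tilde L(t-s,x_\alpha-z)$; the same estimate holds with $\alpha,t-s$ replaced by $\beta,s$. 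Multiplying the two, $g(x_\eta)\le(\tfrac94)^2 g(z)\le e^8 g(z)$ for every $z\in\cell_\eta$, so $g(x_\eta)\Dx\le e^8\int_{\cell_\eta}g$. Summing over $\eta\notin\{\alpha,\beta\}$ and using that these cells are pairwise disjoint subsets of $\R$ yields $\sum_{\eta\notin\{\alpha,\beta\}}g(x_\eta)\Dx\le e^8 I(x_\alpha,x_\beta,s,t)$, which is \eqref{eq:trunc_convolve}.

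For \eqref{eq:trunc_convolve4}, with $\alpha=\beta$ and $0\le s,t-s\le C\Dx^2$, I would just sum directly: for $\eta\ne\alpha$, writing $j=\alpha-\eta\ne0$ and using $\tau+j^2\Dx^2\ge j^2\Dx^2$ together with $\sqrt\tau\le\sqrt C\,\Dx$, one gets $\tilde L(\tau,j\Dx)=\sqrt\tau/(\tau+j^2\Dx^2)\le\sqrt C/(j^2\Dx)$ for $\tau\in\{s,t-s\}$, hence $g(x_\eta)\le C/(j^4\Dx^2)$; summing, $\sum_{\eta\ne\alpha}g(x_\eta)\Dx\le(C/\Dx)\sum_{j\ne0}j^{-4}=\tfrac{\pi^4}{45}\,C/\Dx$, which is in particular $\le 3C^2/\Dx$ in the regime $C\ge1$ relevant to the applications (and $\le 3C/\Dx$ in general).

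For \eqref{eq:trunc_convolve3} I would split $\sum_{\eta\in\Z}=\sum_{\eta\notin\{\alpha,\beta\}}+\sum_{\eta\in\{\alpha,\beta\}}$; the first sum is $\le e^8 I$ by the argument of \eqref{eq:trunc_convolve} (which never used anything about $\eta=\alpha,\beta$). For an endpoint term, say $\eta=\alpha$, put $a=\sqrt{t-s}$, $b=\sqrt s$, $d=|x_\alpha-x_\beta|$, so that $g(x_\alpha)\Dx=\Dx b/\bigl(a(b^2+d^2)\bigr)$ while $I=\pi(a+b)/\bigl((a+b)^2+d^2\bigr)$ by Lemma~\ref{thm:cont_convolve}; since $(a+b)^2\ge b^2$, the elementary inequality $\tfrac{X+c}{Y+c}\le\tfrac XY$ (for $X\ge Y>0$, $c\ge0$) gives $g(x_\alpha)\Dx\le\tfrac\Dx\pi\bigl(\tfrac1a+\tfrac1b\bigr)I\le\tfrac{2\Dx}{\pi\sqrt\delta}I$, and identically for $\eta=\beta$ when $\alpha\ne\beta$; in either case the endpoint contribution is at most $\tfrac{4\Dx}{\pi\sqrt\delta}I$. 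Since $\tfrac4\pi\tfrac\Dx{\sqrt\delta}\le2\sqrt2\,\tfrac\Dx{\sqrt\delta}\le1+2\tfrac{\Dx^2}\delta\le\bigl(1+\tfrac{\Dx^2}\delta\bigr)^2$ (by AM--GM), this yields \eqref{eq:trunc_convolve3}. The only genuine subtlety throughout is the bookkeeping around the singular indices: dropping $\eta=\alpha,\beta$ is exactly what repairs the failure of a Lorentzian of width $\ll\Dx$ to be slowly varying on a cell, and for \eqref{eq:trunc_convolve3} one must verify that the two resulting endpoint terms fit inside the factor $(1+\Dx^2/\delta)^2$ — which is where the comparison with the explicit value of $I$ from Lemma~\ref{thm:cont_convolve} does the work.
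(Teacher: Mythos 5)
Your proof is correct, and in two of the three parts it takes a genuinely different route from the paper's. For \eqref{eq:trunc_convolve} the paper runs a logarithmic-derivative argument: it writes $g'=2gU$, bounds $\abs{U}\le 4/\Dx$ on each cell $\cell_\eta$ with $\eta\notin\{\alpha,\beta\}$, and concludes $\max_{\cell_\eta}g\le e^{8}\min_{\cell_\eta}g$. Your triangle-inequality comparison $\abs{x_\alpha-z}\le\tfrac32\abs{x_\alpha-x_\eta}$ yields the same cell-wise comparability more directly, with the better constant $(9/4)^2=81/16$ in place of $e^8$ (so the stated bound follows a fortiori). For \eqref{eq:trunc_convolve3} the paper handles the diagonal terms by lower-bounding $\int_{\cell_\alpha}g$ via a pointwise lower bound on $g$ over $\cell_\alpha$; that lower bound is delicate when $\alpha\neq\beta$, since the factor $\tilde{L}(s,x_\beta-z)$ need not be bounded below by $(1+\Dx^2/\delta)^{-1}/\sqrt{s}$ there. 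Your route --- comparing $g(x_\alpha)\Dx$ directly against the closed-form value $I=\pi\tilde{L}\bigl((\sqrt{t-s}+\sqrt{s})^2,x_\alpha-x_\beta\bigr)$ from Lemma \ref{thm:cont_convolve} and then absorbing $\tfrac{4\Dx}{\pi\sqrt{\delta}}$ into $(1+\Dx^2/\delta)^2$ by AM--GM --- sidesteps this entirely and is the more robust argument; note that it does use the factor $\pi$ in the evaluation of $I$ (present in the proof of Lemma \ref{thm:cont_convolve}, though dropped in its statement), without which the constant would not quite close. Part \eqref{eq:trunc_convolve4} is the same direct summation as in the paper, and your observation that the computation actually produces $\tfrac{\pi^4}{45}\,C/\Dx$ --- which sits below $3C^2/\Dx$ only for $C\ge\pi^4/135\approx 0.73$ --- reflects a constant-tracking slip already present in the paper's own proof (since $\sqrt{s(t-s)}\le C\Dx^2$ gives a single power of $C$); as the lemma is only invoked with $C$ an absolute constant of order one, nothing downstream is affected, but you were right to flag it.
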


\begin{proof}
{\em 1. Proof of \eqref{eq:trunc_convolve}.}

Differentiating $g$ we find:
\begin{align*}
	g'(z) = 2 g(z)\underbrace{ \bk{
	\frac{\bk{x_{\alpha} - z}/\bk{t - s}}
	{1 + \frac{\bk{x_{\alpha} - z}^2}{t - s}}
	+\frac{\bk{x_{\beta} - z}/s}
	{1 + \frac{\bk{x_{\beta}-z}^2}{s}}}}_{\eqqcolon\, U(z)}.
\end{align*}
For any $z_1, z_2 \in \R$, we have
\begin{align}\label{eq:g_log_estm}
	\log\biggl(\frac{g(z_2)}{g(z_1)}\biggr)
	=2\int_{z_1}^{z_2} U(z)\,\d z.
\end{align}
If $\eta \neq \alpha, \beta$, we have
$\abs{x_\alpha - z}, \abs{x_\beta - z} \ge \Dx/2$
for $z \in \cell_\eta\coloneqq [x_{\eta}
-\Dx/2 , x_{\eta} + \Dx/2]$.
Therefore,
\begin{align*}
	\max_{z\in\cell_\eta} \abs{U(z) }
	\le \frac1{\abs{x_\alpha - z}\wedge\abs{x_\beta - z}}
	\bk{\frac{\abs{x_{\alpha} - z}^2/\bk{t - s}}
	{1+\frac{\bk{x_{\alpha} - z}^2}{t - s}}
	+\frac{\abs{x_{\beta}-z}^2/s}
	{1+\frac{\bk{x_{\beta}-z}^2}{s}}} \le \frac4\Dx.
\end{align*}
Inserting this back into \eqref{eq:g_log_estm},
using $\abs{z_1 - z_2} \le \Dx$ for
$z_1, z_2 \in \cell_\eta$, we get
\begin{align*}
	\frac{\max_{z \in \cell_\eta}
	g(z)}{\min_{x \in \cell_\eta} g(z)} \le e^8.
\end{align*}
Therefore, for any $t > s > 0$,
\begin{align*}
	g(x_{\eta})  \le \max_{z \in \cell_\eta} g(z)
	\le e^8 \min_{z \in \cell_\eta} g(z)
	\le \frac{e^8}\Dx \int_{x_{\eta}-\Dx/2}^{x_{\eta}
	+ \Dx/2} g(z) \,\d z.
\end{align*}
Using $g(z) \ge 0$, we can sum up in
$\eta \in \Z\backslash\{\alpha, \beta\}$ to conclude
that
\begin{align}\label{eq:g_sum_conclude1}
	\sum_{\eta \in \Z\backslash \{\alpha, \beta\}} g(x_\eta) \Dx
	\le e^8 \sum_{\eta \in \Z\backslash\{\alpha, \beta\}}
	\int_{x_{\eta} - \Dx/2}^{x_{\eta} + \Dx/2} g(z) \,\d z
	\le e^8 I(x_\alpha, x_\beta, s,t).
\end{align}

\smallskip
\noindent {\em 2. Proof of \eqref{eq:trunc_convolve4}.}

We turn to the  bound for $g$
when $\alpha = \beta$.
By the elementary estimate $(1 + \abs{v})^{-1}
\le\abs{v}^{-1}$, we have directly:
\begin{align*}
	\sum_{\eta \neq \alpha} g(x_\eta) \Dx
	&\le \sum_{\eta \neq \alpha}
	\frac{\sqrt{s \bk{t-s}}}{\abs{\alpha-\eta}^4} \Dx^{-3}
	\le \frac{\pi^4}{90} \frac{C^2}{\Dx},
\end{align*}
where we used the assumption
$0 \le s, t - s \le C \Dx^2$ and the fact
$\sum_{k \ge 1} k^{-4} = \pi^4/90$
in the second inequality.

\smallskip
\noindent {\em 3. Proof of \eqref{eq:trunc_convolve3}.}

For $t - s , s \ge \delta$,
on $\cell_\alpha$,
\begin{align*}
	g(z) \ge \frac{ \bk{1 + \Dx^2/\delta}^{-2}}{\sqrt{s(t - s)}}.
\end{align*}
On the other hand, we always have $g(z) \le 1/\sqrt{s(t - s)}$.
Therefore,
\begin{align*}
	\int_{x_{\alpha} - \Dx/2}^{x_{\alpha} + \Dx/2}
	g(z) \,\d z  \ge \Dx \bk{1 + \frac{\Dx^2}{\delta}}^{-2}
    \frac1{\sqrt{s (t - s)}}
	\ge \Dx \bk{1 + \frac{\Dx^2}{\delta}}^{-2} g(x_\alpha) .
\end{align*}
Inserting this back
into \eqref{eq:g_sum_conclude1}, we have
\begin{align*}
	\sum_{\eta \in \Z} g(x_\eta) \Dx
	&\le e^8 \sum_{\eta \in \Z\backslash \{\alpha, \beta\}}
	\int_{x_{\eta} - \Dx/2}^{x_{\eta} + \Dx/2}g(z) \,\d z
	\\ & \quad\,\,
	+ \bk{1 + \frac{\Dx^2}{\delta}}^{2}
	\bk{\int_{x_{\alpha} - \Dx/2}^{x_{\alpha} + \Dx/2}g(z) \,\d z
	+\int_{x_{\beta} - \Dx/2}^{x_{\beta} + \Dx/2}g(z) \,\d z}
	\\ &
	\le \bigg[e^8+\bigg(1+\frac{\Dx^2}{\delta}\bigg)^{2}\bigg]
	I(x_\alpha, x_\beta, s,t).
\end{align*}
\end{proof}

\begin{proposition}\label{prop:auxcalc1}
Let $C_1 > 0$, $t\geq0$, $\alpha,\beta\in\Z^d$.
Let $Z(\alpha)$ be the number of zeros among the
entries of the $d$-tuple $\alpha$.  Set
\begin{equation}\label{eq:K_ab_parts}
	f(t,\alpha) \coloneqq \bk{1 \wedge
	\frac{C_1 t}{\Dx^2}}^{Z(\alpha)/2}t^{-1/2}
	\prod_{j=1}^d \tilde{L}\bigl(C_1t,\alpha^j \Dx\bigr).
\end{equation}
Then for any $0<s<t$,
we have the following estimates
for the discrete convolutions:
\begin{align}\label{eq:convolution-estimate}
	\sum_{\eta \in \Z^d}
	f(t-s,\alpha-\eta) f(s,\eta-\beta)\Dx^d
	\lesssim \frac{\sqrt{t}f(t,\alpha-\beta)}
	{\sqrt{s\bk{t-s}}},
\end{align}
where the implicit constant depends
only on $C_1$ and $d$, and specifically,
is independent of $\Dx$ and $T$.
\end{proposition}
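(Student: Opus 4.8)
The plan is to exploit the coordinatewise product structure of $f$ to reduce the $d$-dimensional discrete convolution to a product of $d$ one-dimensional sums, and then to estimate each one-dimensional sum by peeling off the at most two ``diagonal'' indices and applying Lemmas~\ref{thm:cont_convolve} and~\ref{thm:auxcalc4} to the rest.

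First, set $\tau_1\coloneqq C_1(t-s)$, $\tau_2\coloneqq C_1 s$, $\tau\coloneqq C_1 t=\tau_1+\tau_2$, and note that, since $Z(\gamma)=\sum_{j=1}^d\one{\gamma^j=0}$,
\[
  f(t,\gamma)=t^{-1/2}\prod_{j=1}^d\Bigl[\bigl(1\wedge\tfrac{C_1t}{\Dx^2}\bigr)^{\one{\gamma^j=0}/2}\tilde L\bigl(C_1t,\gamma^j\Dx\bigr)\Bigr].
\]
Hence $f(t-s,\alpha-\eta)f(s,\eta-\beta)=\tfrac1{\sqrt{(t-s)s}}\prod_{j=1}^d h_j(\eta^j)$, where $h_j$ depends on $\eta$ only through $\eta^j$; summing over $\eta\in\Z^d$ (with $\Dx^d=\prod_j\Dx$) then splits as $\prod_j(\sum_{n}h_j(n)\Dx)$. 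Since the claimed right-hand side equals $\tfrac1{\sqrt{(t-s)s}}\prod_j\bigl[(1\wedge\tfrac{\tau}{\Dx^2})^{\one{\alpha^j=\beta^j}/2}\tilde L(\tau,(\alpha^j-\beta^j)\Dx)\bigr]$, the prefactor cancels and it remains to prove the one-dimensional estimate: for all $a,b\in\Z$, with $w\coloneqq(a-b)\Dx$,
\[
  \sum_{n\in\Z}\bigl(1\wedge\tfrac{\tau_1}{\Dx^2}\bigr)^{\one{n=a}/2}\tilde L(\tau_1,(a-n)\Dx)\bigl(1\wedge\tfrac{\tau_2}{\Dx^2}\bigr)^{\one{n=b}/2}\tilde L(\tau_2,(n-b)\Dx)\,\Dx\lesssim\bigl(1\wedge\tfrac{\tau}{\Dx^2}\bigr)^{\one{a=b}/2}\tilde L(\tau,w)
\]
with an absolute constant; raising to the $d$-th power then gives \eqref{eq:convolution-estimate} with a constant depending only on $d$ (a fortiori on $C_1$ and $d$).

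For the one-dimensional estimate I split the sum into the $n=a$ term, the $n=b$ term (present only if $a\ne b$), and the remainder $n\notin\{a,b\}$. On the remainder the indicator factors drop out, so Lemma~\ref{thm:auxcalc4}\,\eqref{eq:trunc_convolve} (applied with scale parameters $\tau_1,\tau_2$; its proof uses only their positivity) together with Lemma~\ref{thm:cont_convolve}, which yields $I\le\sqrt2\pi\,\tilde L(\tau_1+\tau_2,\cdot)$, bounds it by $e^8\sqrt2\pi\,\tilde L(\tau,w)$; in the degenerate case $a=b$ with $\tau\le\Dx^2$ I instead use \eqref{eq:trunc_convolve4} (then $\tau_1,\tau_2\le\Dx^2$) to bound the remainder by $3/\Dx$, which is comparable to $\tilde L(\tau,0)=\tau^{-1/2}$ since there also $\Dx^{-1}=(1\wedge\tfrac\tau{\Dx^2})^{1/2}\tilde L(\tau,0)$. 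For the diagonal terms, the observation is that $(1\wedge\tfrac{\tau_i}{\Dx^2})^{1/2}\tilde L(\tau_i,0)=\min(\tau_i^{-1/2},\Dx^{-1})=:M_i$, so the $n=a$ term equals $M_1\Dx\,\tilde L(\tau_2,w)$ and, when $a\ne b$, the $n=b$ term equals $M_2\Dx\,\tilde L(\tau_1,w)$, whereas when $a=b$ there is a single diagonal term $M_1M_2\Dx$. When $a\ne b$ one has $|w|\ge\Dx$, and writing $\tfrac{\tilde L(\tau_2,w)}{\tilde L(\tau,w)}=\sqrt{\tfrac{\tau_2}\tau}\bigl(1+\tfrac{\tau_1}{\tau_2+w^2}\bigr)$ and using $M_1\le\Dx^{-1}$, $M_1\le\tau_1^{-1/2}$, $\tau_1\le\tau$, $\sqrt{\tau_1\tau_2}\le\sqrt{\tau\tau_2}$ and $\tilde L(\tau_2,w)\le\tfrac1{2|w|}\le\tfrac1{2\Dx}$, one gets $M_1\Dx\,\tilde L(\tau_2,w)\le\bigl(1+\Dx\,\tilde L(\tau_2,w)\bigr)\tilde L(\tau,w)\le\tfrac32\,\tilde L(\tau,w)$, and symmetrically for $n=b$. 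When $a=b$ and $\tau>\Dx^2$, the bound $M_1M_2\Dx\lesssim\tau^{-1/2}$ follows from $M_1M_2\Dx\le M_i\le\Dx^{-1}$ combined with the elementary fact $\tau_1\tau_2\ge\tfrac12\Dx^2(\tau_1+\tau_2)$ valid when $\tau_1,\tau_2\ge\Dx^2$, after distinguishing the (few) subcases according to which of $\tau_1,\tau_2$ lie below $\Dx^2$; and when $a=b$, $\tau\le\Dx^2$ one simply has $M_1M_2\Dx=\Dx^{-1}$, matching. Summing the three contributions proves the one-dimensional estimate.

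The main obstacle will be the uniform-in-$\Dx$ control of the two diagonal terms: the naive bound $(1\wedge\tfrac{\tau_i}{\Dx^2})^{1/2}\tilde L(\tau_i,0)\le\Dx^{-1}$ is far too lossy when $\tau_i\gg\Dx^2$, so one must retain the sharp value $\min(\tau_i^{-1/2},\Dx^{-1})$, and it is precisely this that forces the case analysis above (by the relative sizes of $\tau_1,\tau_2,\Dx^2$ and of $w^2$ versus $\tau$), each case matching one of the alternatives provided by Lemma~\ref{thm:auxcalc4}. Once this bookkeeping is organized, every individual estimate reduces to a one-line elementary inequality, with the continuous convolution identity of Lemma~\ref{thm:cont_convolve} doing all the real work on the off-diagonal part.
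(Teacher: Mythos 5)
Your proposal is correct and follows essentially the same route as the paper: after the same coordinatewise factorisation into one-dimensional sums, you split off exactly the same terms (the off-diagonal remainder handled by Lemmas~\ref{thm:cont_convolve} and~\ref{thm:auxcalc4}, the single diagonal term when $\alpha^j=\beta^j$, and the two endpoint terms when $\alpha^j\neq\beta^j$). The only difference is cosmetic: for the endpoint terms the paper runs a case analysis via Lemma~\ref{lem:lorentzian-behavior}, whereas you use the identity $\tilde L(\tau_2,w)/\tilde L(\tau,w)=\sqrt{\tau_2/\tau}\,\bigl(1+\tfrac{\tau_1}{\tau_2+w^2}\bigr)$ together with $\tilde L(\tau_2,w)\le\tfrac1{2|w|}$, which is a slightly slicker way to reach the same bound.
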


\begin{remark}
Splitting $f(t, \alpha) $ into $f_<(t, \alpha, 1) + f_>(t, \alpha, 1)$ where
\begin{align*}
f_>(t, \alpha, \lambda)
	&\coloneqq  t^{-1/2} \one{\{C_1t \ge \Dx^2/\lambda\}}
	  \prod_{j = 1}^d
 	\tilde{L}\bigl(C_1t,\alpha^j \Dx \bigr),\\
f_<(t,\alpha,\lambda)
	&\coloneqq\bk{\frac{\sqrt{C_1}t^{1/2}}{\Dx}}^{Z(\alpha - \beta)}
    \one{\{ C_1t < \Dx^2/\lambda\}}
    t^{- 1/2} 	\prod_{j = 1}^d
	\tilde{L}\bigl(C_1 t, \alpha^j \Dx\bigr),
\end{align*}
and $f_>(t, \alpha) := f(t,\alpha,1)$, 
it is also possible to show the \emph{interaction estimates}
\begin{align*}
& \sum_{\eta \in \Z^d}
	f_<(t - s,\alpha - \eta) f_< (s, \eta - \beta)\Dx^d
\lesssim \frac{\sqrt{t}f_< (t,\alpha - \beta,\hf )}
	{ \sqrt{s\bk{t - s}}},\\
&
\sum_{\eta \in \Z^d} f_>(t - s,\alpha -  \eta)
f_< (s, \eta - \beta) \Dx^d
\lesssim  \frac{\sqrt{t}f_>(t,\alpha - \beta, \hf)}{\sqrt{s\bk{t - s}}} 	
+ \frac{\sqrt{t}f_<(t,\alpha - \beta, \hf)}{\sqrt{s\bk{t - s}}},
	\\
&\sum_{\eta \in \Z^d} f_>(t - s,\alpha - \eta)
f_> (s, \eta - \beta) \Dx^d
\lesssim \frac{\sqrt{t}f_>(t,\alpha - \beta,\hf )}
	{\sqrt{s\bk{t - s}}}.
\end{align*}
\end{remark}

\begin{proof}[Proof of Proposition \ref{prop:auxcalc1}]
By scaling $t\mapsto t/C_1$ (whereby $f$ picks
up a constant multiplicative factor $C_1^{(d+1)/2}$), we can assume
without loss of generality that $C_1 = 1$.
For $t \in [0,\infty)$ and $k \in \Z$, set
\begin{align*}
	h(t,k) \coloneqq \bk{\one{k\neq0}
	+\biggl(1 \wedge \frac{t^{1/2}}{\Dx}\biggr)
	\one{k = 0}} \tilde{L}(t,k\Dx).
\end{align*}
We first factorise the sum
\eqref{eq:convolution-estimate} to get:
\begin{align*}
	&\sum_{\eta\in\Z^d} f(t-s,\alpha-\eta)
	f(s,\eta - \beta)\,\Dx^d
	\\*
	&\qquad =\frac{1}{\sqrt{s(t-s)}}\prod_{j=1}^d
	\sum_{k\in\Z}h(t-s,\alpha^j-k)h(s,k -\beta^j)\,\Dx.
\end{align*}
In order to arrive
at \eqref{eq:convolution-estimate},
it suffices to show that
\begin{equation}\label{eq:convolution-estimate-j}
	A^j \coloneqq \sum_{k\in\Z} h(t-s,\alpha^j-k)
	h(s,k-\beta^j)\,\Dx
	\lesssim h(t,\alpha^j-\beta^j)
	\qquad
	\forall\ j=1,\dots,d.
\end{equation}

We decompose $A^j$ into
\begin{align*}
	A^j &=
	\begin{aligned}[t]
		&\sum_{k\neq\alpha^j,\beta^j}
		h\bigl(t-s,(\alpha^j-k)\Dx\bigr)
		h\bigl(s,(k-\beta^j)\Dx\bigr)\Dx
		\\ &
		+\ind_{\alpha^j=\beta^j}h(t-s,0)h(s,0)\Dx
		\\ &
		+\ind_{\alpha^j\neq\beta^j}
		\Bigl(h(t-s,0)h(s,\alpha^j-\beta^j)
		+h(t-s,\alpha^j- \beta^j)h(s,0)\Bigr)\Dx
	\end{aligned}
	\\ & \eqqcolon
	A_1^j + A_2^j + A_3^j.
\end{align*}
We claim that each of these terms can be
bounded (up to some absolute constant) by
$h(t,\alpha^j,\beta^j)$. We subsequently
abbreviate using $z \coloneqq
x_\alpha^j-x_\beta^j = (\alpha^j-\beta^j)\Dx$.

By Lemma 4.1 and 4.2, we have
the two estimates
$$
A_1^j
\leq e^8 I(x_\alpha^j,x_\beta^j,s,t)
= 2e^8 \tilde{L}(t,z)
$$
and (when $\alpha^j=\beta^j$)
$$
A_1^j = C\frac{s(t-s)}{\Dx^3}
$$
for some absolute constant $C>0$. Hence,
if both $\alpha^j=\beta^j$ and $t<\Dx^2$ we get
$$
A_1^j=C\frac{s(t-s)}{\Dx^3} \leq C\frac{t^2}{\Dx^3}
\leq C\frac{1}{\Dx}=Ch(t,\alpha^j-\beta^j),
$$
while if either $\alpha^j\neq\beta^j$ or
$t\geq \Dx^2$ (or both) we get
$A_1^j\leq C\tilde{L}(t,z)
=Ch(t,\alpha^j-\beta^j)$.

Next, by direct insertion, we get
(assuming $\alpha^j=\beta^j$)
$$
A_2^j = \frac{\Dx}{\sqrt{s(t-s)}}
\biggl(1\wedge\frac{\bk{t-s}^{{1/2}}}{\Dx}\biggr)
\biggl(1\wedge\frac{s^{1/2}}{\Dx}\biggr),
$$
and a case-by-case analysis (over whether
$s$ or $t-s$ are greater or smaller
than $\Dx^2$) reveals that
$$
A_2^j \leq \sqrt{2}\frac{1}{\sqrt t}
=\sqrt{2 } h(t,0).
$$

Finally, for $A_3^j$ we get (assuming $\alpha^j\neq\beta^j$,
and setting $z=(\alpha^j-\beta^j)\Dx$)
\begin{align*}
	A_3^j
	&=\biggl(\tilde{L}(t-s,0)\tilde{L}(s,z)
	\biggl(1\wedge\frac{\sqrt{t-s}}{\Dx}\biggr)
	+ \tilde{L}(t-s,z)\tilde{L}(s,0)
	\biggl(1\wedge\frac{\sqrt{s}}{\Dx}\biggr)\biggr)\Dx
	\\ &
	=\tilde{L}(s,z)
	\biggl(1\wedge\frac{\Dx}{\sqrt{t-s}}\biggr)
	+\tilde{L}(t-s,z)
    \biggl(1\wedge\frac{\Dx}{\sqrt{s}}\biggr).
\end{align*}
We shall see that both of these terms can be bounded by
$2h(t,\alpha^j - \beta^j)$; since the
two terms are symmetric, we only consider
the first one, and name this $A_{3,1}^j$.

Before proceeding we record the following
simple observations:
\begin{lemma}\label{lem:lorentzian-behavior}
Fix $z\neq 0$.
\begin{enumerate}
	\item[(i)] The function $s\mapsto \sqrt{s}\tilde{L}(s,z)$
	vanishes at $s=0$ and is increasing for $s>0$.

	\item[(ii)] The function $\tilde{L}(s,z)$ (of $s\geq0$)
	vanishes at $s=0$, has a single maximum at
	$s^*\coloneqq z^2$, and converges
	to zero as $s\to\infty$.
\end{enumerate}
\end{lemma}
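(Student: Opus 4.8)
The plan is to reduce both claims to elementary one-variable calculus after rewriting the Lorentzian in the equivalent form
\[
\tilde{L}(s,z) = \frac{1}{\sqrt{s}}\Bigl(1 + \frac{z^2}{s}\Bigr)^{-1} = \frac{\sqrt{s}}{s + z^2}, \qquad s > 0.
\]
Since $z \neq 0$, the denominator $s + z^2$ is bounded away from zero, so $\tilde{L}(\cdot,z)$ is smooth on $[0,\infty)$ and all the monotonicity statements below will be strict.

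For part (i), multiply by $\sqrt{s}$ to obtain $\sqrt{s}\,\tilde{L}(s,z) = \frac{s}{s+z^2} = 1 - \frac{z^2}{s+z^2}$. This expression equals $0$ at $s = 0$, and the map $s \mapsto \frac{z^2}{s+z^2}$ is strictly decreasing on $(0,\infty)$; hence $s \mapsto \sqrt{s}\,\tilde{L}(s,z)$ vanishes at $s=0$ and is strictly increasing for $s > 0$ (and, as a byproduct, converges to $1$ as $s \to \infty$).

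For part (ii), the boundary behaviour is immediate: $\tilde{L}(0,z) = 0$, and $\tilde{L}(s,z) = \frac{\sqrt{s}}{s+z^2} \sim s^{-1/2} \to 0$ as $s \to \infty$. For the interior, I would perform the strictly increasing change of variable $u = \sqrt{s} \in (0,\infty)$ and study $\phi(u) \coloneqq \frac{u}{u^2+z^2}$. A direct computation gives $\phi'(u) = \frac{z^2 - u^2}{(u^2+z^2)^2}$, which is positive for $0 < u < |z|$, zero at $u = |z|$, and negative for $u > |z|$. Transporting this back through $s = u^2$ shows that $\tilde{L}(\cdot,z)$ increases on $(0,z^2)$, has a single maximum at $s^* = z^2$, and decreases on $(z^2,\infty)$, as claimed.

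I do not expect any genuine obstacle here: the only points requiring a little care are the use of $z \neq 0$ to guarantee strictness and non-degeneracy of the denominators, and the bookkeeping for the substitution $s = u^2$, which, being monotone, preserves both the location and the nature of the extremum.
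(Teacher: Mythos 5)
Your proof is correct and complete; the paper itself states this lemma as a collection of ``simple observations'' without proof, and your elementary verification via the rewriting $\tilde{L}(s,z)=\sqrt{s}/(s+z^2)$ and the substitution $u=\sqrt{s}$ is exactly the intended calculus argument.
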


Recall that $z\coloneqq(\alpha^j-\beta^j)\Dx$
is a non-zero integer multiple of $\Dx$.
We split the proof that $A_{3,1}^j
\lesssim h(t,\alpha^j - \beta^j)=\tilde{L}(t,z)$
(assuming $\alpha^j\neq \beta^j$)
into several cases.

\begin{enumerate}[label=(\alph*)]
	\item \underline{$t\leq z^2$:}
	Lemma \ref{lem:lorentzian-behavior}(ii) yields
	$A_{3,1}\leq
    \tilde L(s) \leq \tilde L(t)$,

	\item \underline{$z^2<t\leq 2\Dx^2$:} (Necessarily, $z=\pm\Dx$.)
	By Lemma \ref{lem:lorentzian-behavior}(ii) we get again
	$$
	A_{3,1} \leq \tilde{L}(s,z) \leq \tilde{L}(\Dx^2,z)
	=\tilde{L}(t,z){\frac{\tilde{L}(\Dx^2,z)}
	{\tilde{L}(t,z)}}
	\leq \sqrt{2}\tilde{L}(t,z),
	$$
	where we have used $\frac{\tilde{L}(\Dx^2,z)}{\tilde{L}(t,z)}
	\leq \frac{\tilde{L}(\Dx^2,z)}{\tilde{L}(2\Dx^2,z)} \le \sqrt{2}$.

	\item \underline{$t> 2\Dx^2 \vee z^2$:} We
	split into two sub-cases:
	\begin{enumerate}[label=(c$_{\arabic*}$)]
		\item \underline{$t-s \leq \Dx^2$:}
		Using first Lemma \ref{lem:lorentzian-behavior}(i)
		and then the fact that $s\geq t-\Dx^2 \geq t/2$, we get
		$$
		A_{3,1}^j = \tilde{L}(s,z)
		\leq \tilde{L}(t,z)
		\underbrace{\frac{t^\hf}{s^\hf}}_{\leq\,\sqrt2}
		\leq \sqrt2 \tilde{L}(t,z).
		$$

		\item \underline{$t-s>\Dx^2$:} Set $s_0 \coloneqq s \vee \Dx^2$.
		Using Lemma \ref{lem:lorentzian-behavior}(i) and (ii), we get
		\begin{align*}
			A_{3,1}^j &= \tilde{L}(s,z)\frac{\Dx}{\sqrt{t-s}}
			\leq \tilde L(s_0,z)\frac{\Dx}{\sqrt{t-s_0}}
			=\tilde L(t,z) \frac{\tilde{L}(s_0,z)}{\tilde{L}(t,z)}\frac{ \Dx}
			{ \sqrt{t-s_0}}
			\\ &
			=\tilde L(t,z) \underbrace{\frac{\sqrt{s_0}\tilde{L}(s_0,z)}{\sqrt{t}\tilde{L}(t,z)}}_{\leq\,1}
			\underbrace{\biggl(\frac{\sqrt t}{\sqrt s_0}\wedge
			\frac{\sqrt t}{\sqrt{t-s_0}}\biggr)}_{\leq\,\sqrt 2}
			\underbrace{\biggl(\frac{\Dx}{\sqrt s_0}\vee
			\frac{\Dx}{\sqrt{t-s_0}}\biggr)}_{\leq\,1}
			\leq \sqrt 2 \tilde L(t,z).
		\end{align*}
	\end{enumerate}
\end{enumerate}

Taken together, we arrive at
$$
A^j \leq Ch(t,\alpha^j-\beta^j)+\ind_{\alpha^j
=\beta^j} h(t,\alpha^j-\beta^j)+\sqrt{2}
\ind_{\alpha^j\neq \beta^j}h(t,\alpha^j-\beta^j),
$$
which is precisely \eqref{eq:convolution-estimate-j}.
\end{proof}

\begin{lemma}[Higher auxiliary
kernel estimates]\label{lem:auxcalc2}
Let $f$ be as defined in \eqref{eq:K_ab_parts},
for some $C_1>0$. Assume $K^{(1)}=K^{(1)}_{\alpha,\beta}(t)$
satisfies
\begin{align*}
	\bigl|K^{(1)}_{\alpha, \beta}(t)\bigr|
	&\le C f(t,\alpha-\beta).
\end{align*}
for a constant $C > 0$ and
all $0 \le s \le t \le T$. Define $K^{(2)},
K^{(3)},\dots$ iteratively by
$$
K^{(m)}_{\alpha, \beta}(t) \coloneqq
\int_0^t  \Big(K(t - s) \dconv
K^{(m - 1)}(s)\Big)_{\alpha, \beta}\,\d s
$$
Then for some absolute
constant $C_3 > 0$, $K^{(m)}$ satisfies
\begin{align}\label{eq:Km_ab_bound}
	\bigl|K^{(m)}_{\alpha, \beta}(t)\bigr|
	\le \frac{C C_3^m}{\Gamma(m/2)}
	t^{(m - 1)/2} f(t,\alpha-\beta)
\end{align}
for all $t\in[0,T]$, $\alpha,\beta\in\Z^d$,
and $m\in\N$.
\end{lemma}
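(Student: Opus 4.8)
The plan is to prove \eqref{eq:Km_ab_bound} by induction on $m$, using Proposition \ref{prop:auxcalc1} for the spatial sum and an explicit Beta–function computation for the temporal integral. The base case $m=1$ is exactly the hypothesis $\bigl|K^{(1)}_{\alpha,\beta}(t)\bigr|\le C f(t,\alpha-\beta)$, which matches the right-hand side of \eqref{eq:Km_ab_bound} at $m=1$ as soon as $C_3\ge\Gamma(1/2)=\sqrt\pi$ (since then $\frac{CC_3}{\Gamma(1/2)}t^{0}\ge C$). For the inductive step I would fix $m\ge 2$, assume \eqref{eq:Km_ab_bound} with $m-1$ in place of $m$, and start from
\[
	\bigl|K^{(m)}_{\alpha,\beta}(t)\bigr|
	\le \int_0^t \sum_{\eta\in\Z^d}
	\bigl|K_{\alpha,\eta}(t-s)\bigr|\,
	\bigl|K^{(m-1)}_{\eta,\beta}(s)\bigr|\,\Dx^d\,\d s ,
\]
where $K=K^{(1)}$, the interchange of the spatial sum and the time integral being legitimate by Tonelli since all terms are non-negative. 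Inserting the base bound $\bigl|K_{\alpha,\eta}(t-s)\bigr|\le Cf(t-s,\alpha-\eta)$ and the inductive bound $\bigl|K^{(m-1)}_{\eta,\beta}(s)\bigr|\le \frac{CC_3^{m-1}}{\Gamma((m-1)/2)}\,s^{(m-2)/2}f(s,\eta-\beta)$, and pulling the $s$-dependent scalar factors outside the spatial sum, reduces everything to estimating $\sum_{\eta\in\Z^d} f(t-s,\alpha-\eta)f(s,\eta-\beta)\Dx^d$.

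By Proposition \ref{prop:auxcalc1}, applied at each fixed $s\in(0,t)$ (which is exactly its range $0<s<t$), this spatial sum is bounded by $C_\star\,\dfrac{\sqrt{t}\,f(t,\alpha-\beta)}{\sqrt{s(t-s)}}$, with $C_\star$ depending only on $C_1$ and $d$. What remains is the one-dimensional time integral
\[
	\int_0^t s^{(m-2)/2}\,\frac{\sqrt{t}}{\sqrt{s(t-s)}}\,\d s
	=\sqrt{t}\int_0^t s^{(m-3)/2}(t-s)^{-1/2}\,\d s
	= t^{(m-1)/2}\,B\!\Bigl(\tfrac{m-1}{2},\tfrac12\Bigr),
\]
where the substitution $s=tu$ produces the Beta function, and $B\bigl(\tfrac{m-1}{2},\tfrac12\bigr)=\dfrac{\Gamma((m-1)/2)\,\Gamma(1/2)}{\Gamma(m/2)}$. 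Multiplying through, the factor $\Gamma((m-1)/2)$ from the Beta integral cancels precisely the one in the denominator of the inductive constant, leaving
\[
	\bigl|K^{(m)}_{\alpha,\beta}(t)\bigr|
	\le \frac{C^2 C_\star\,\Gamma(1/2)\,C_3^{\,m-1}}{\Gamma(m/2)}\,
	t^{(m-1)/2}\,f(t,\alpha-\beta).
\]
Choosing $C_3$ once and for all so that $C_3\ge\sqrt\pi$ and $C_3\ge C\,C_\star\sqrt\pi$ — note $C_3$ then depends only on $d$, $C_1$ and $C$, and in particular not on $m$, $t$, $\alpha$, $\beta$ or $\Dx$ — turns the last display into \eqref{eq:Km_ab_bound}, closing the induction.

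I do not expect a genuine obstacle here: the argument is essentially bookkeeping of exponents. The one point that matters and must be checked carefully is that the $t^{(m-1)/2}$ and $s^{(m-3)/2}(t-s)^{-1/2}$ scalings line up so that the temporal convolution contributes exactly the ratio $\Gamma((m-1)/2)/\Gamma(m/2)$, thereby maintaining the $1/\Gamma(m/2)$ normalization; this is the mechanism that will make the Neumann series $\Phi=\sum_m K^{(m)}$ converge in Proposition \ref{lem:auxcalc3}, via $\sum_m \frac{(C_3\sqrt{T})^{m}}{\Gamma(m/2)}<\infty$. A minor but real detail is the endpoint $m=1$: the recursion integral $\int_0^t s^{-1}(t-s)^{-1/2}\,\d s$ diverges, so the recursive step is only used for $m\ge 2$ (where $B(\tfrac{m-1}{2},\tfrac12)$ is finite, e.g. $B(\tfrac12,\tfrac12)=\pi$ at $m=2$, consistent with $\Gamma(1/2)^2/\Gamma(1)$), while $m=1$ is handled directly from the hypothesis.
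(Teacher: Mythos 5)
Your proof is correct and follows essentially the same route as the paper: induction on $m$, with Proposition \ref{prop:auxcalc1} handling the spatial convolution at each fixed $s$ and the Beta-function identity $B(\tfrac{m-1}{2},\tfrac12)=\Gamma(\tfrac{m-1}{2})\Gamma(\tfrac12)/\Gamma(\tfrac{m}{2})$ closing the time integral. You are in fact slightly more careful than the paper's write-up on the bookkeeping of the constant $C$ (you correctly track the factor $C^2$ arising from combining the $K^{(1)}$ bound with the inductive hypothesis and absorb it via $C_3\ge CC_\star\sqrt{\pi}$), and your remark that the recursion is only invoked for $m\ge 2$ is the right way to handle the divergence of $B(0,\tfrac12)$.
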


\begin{proof}
For $m=1$ the right-hand side of \eqref{eq:Km_ab_bound}
is $\frac{CC_3}{\Gamma(1/2)} f(t,\alpha-\beta)$,
so the claim holds as long as $C_3\geq\frac{\Gamma(1/2)}{C}$.
We now induct on $m$. Proposition \ref{prop:auxcalc1}
provides that for some constant $C' > 0$,
\begin{align*}
	&\int_0^t \Big(K(t - s) \dconv
	K^{(m - 1)}(s)\Big)_{\alpha, \beta}\,\d s
	\\ &\qquad
	\le \frac{CC_3^{m-1}}{\Gamma(\bk{m - 1}/2)}
	\int_0^t s^{\bk{m-2}/2} \sum_{\eta \in \Z^d}
	f(t-s,\alpha-\eta)f(s,\eta-\beta) \,\d s
	\\ &\qquad
	\le  \frac{CC_3^{m-1}C'}{\Gamma(\bk{m-1}/2)}
	\int_0^t s^{\bk{m-2}/2}
	\frac{\sqrt{t}}{\sqrt{s\bk{t - s}}}\,\d s
	f(t,\alpha,\beta).
\end{align*}
Using the following fact
about beta functions \cite[Theorem 2.1.2]{BW2016}
$$
\forall a,b \in \mathbb{C}:
\mathfrak{R} a,\mathfrak{R} b>-1,
\qquad
\int_0^1 (1-v)^{a} v^{b}\,\d v
=\frac{\Gamma(a+1) \Gamma(b+1)}{\Gamma(a + b + 2)},
$$
we obtain
\begin{align*}
	&\int_0^t  \Big(
	K(t - s) \dconv K^{(m - 1)}(t - s)\Big)_{\alpha, \beta} \,\d s
	\\ & \qquad
	\le \frac{CC_3^{m-1}C'}{\Gamma(\bk{m - 1}/2)}
	\frac{\Gamma(\bk{m - 1}/2) \Gamma(\hf)}{\Gamma(m /2)}
	t^{\bk{m - 1}/2}f(t,\alpha-\beta)
	\\ & \qquad
	=\frac{CC_3^{m-1}}{\Gamma(m/2)} C'\Gamma(\hf)
	t^{\bk{m-1}/2}f(t,\alpha-\beta),
\end{align*}
which proves the lemma
with $C_3\geq C'\Gamma(\hf)$.
\end{proof}

\begin{proposition}[Pointwise kernel sum bound]
\label{lem:auxcalc3}
Let $K^{(1)}, K^{(2)}, \dots$ be as in
Lemma \ref{lem:auxcalc2}. Then the sum
$$
\Phi_{\alpha, \beta}(t)
\coloneqq \sum_{m = 1}^\infty
K_{\alpha, \beta}^{(m)}(t)
$$
converges in $L^1([0,T])$ for each $(\alpha, \beta)
\in \Z^{d}\times\Z^d$.  With the notation of
\eqref{eq:K_ab_parts}, we also
have the pointwise estimate
\begin{equation*}
	\bigl|\Phi_{\alpha, \beta}(t)\bigr|
	\lesssim_T f(t,\alpha-\beta).
\end{equation*}
\end{proposition}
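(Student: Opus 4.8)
The plan is to obtain both assertions directly by summing over $m$ the per-term estimate of Lemma~\ref{lem:auxcalc2}. Since $K^{(1)},K^{(2)},\dots$ are taken as in that lemma (in the application to the parametrix, $K^{(1)}=K$ satisfies the required bound $|K^{(1)}_{\alpha,\beta}(t)|\le C f(t,\alpha-\beta)$ by combining \eqref{eq:K_estimate} with \eqref{eq:K_ab_parts} at $C_1=2\overline{c}$, as noted in the proof of Proposition~\ref{thm:Phi_bound}), estimate \eqref{eq:Km_ab_bound} holds:
\[
\bigl|K^{(m)}_{\alpha,\beta}(t)\bigr|\le\frac{CC_3^m}{\Gamma(m/2)}\,t^{(m-1)/2}f(t,\alpha-\beta),\qquad t\in[0,T],\ \alpha,\beta\in\Z^d,\ m\in\N,
\]
with $C_3>0$ an absolute constant. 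First I would sum this over $m$, so that the pointwise bound is reduced to controlling the scalar series $S(\tau)\coloneqq\sum_{m\ge1}C_3^m\tau^{(m-1)/2}/\Gamma(m/2)$ at $\tau=t$, uniformly for $t\in[0,T]$.

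For the uniform bound I would note that on $[0,T]$ the $m=1$ term of $S$ is the constant $C_3/\Gamma(1/2)$, while for $m\ge2$ the factor $\tau^{(m-1)/2}$ is increasing, so $S(t)\le S^\ast\coloneqq\sum_{m\ge1}C_3^m(1+T)^{(m-1)/2}/\Gamma(m/2)$ for every $t\in[0,T]$. The series $S^\ast$ converges by the ratio test: consecutive terms have ratio $C_3\sqrt{1+T}\,\Gamma(m/2)/\Gamma((m+1)/2)$, which tends to $0$ because $\Gamma(m/2)/\Gamma((m+1)/2)\sim(m/2)^{-1/2}\to0$, i.e.\ the factorial-type growth of $\Gamma(m/2)$ dominates the geometric factor. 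Hence $S^\ast<\infty$ depends only on $T$ (and on the fixed data through $C_3$), and
\[
\bigl|\Phi_{\alpha,\beta}(t)\bigr|\le\sum_{m\ge1}\bigl|K^{(m)}_{\alpha,\beta}(t)\bigr|\le CS^\ast f(t,\alpha-\beta)\lesssim_T f(t,\alpha-\beta),
\]
which is the claimed pointwise estimate; in particular the series defining $\Phi_{\alpha,\beta}(t)$ converges absolutely for every $t>0$.

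For convergence in $L^1([0,T])$ I would check $\sum_{m\ge1}\|K^{(m)}_{\alpha,\beta}\|_{L^1([0,T])}<\infty$. The only point is that $t\mapsto f(t,\alpha-\beta)$ is integrable on $[0,T]$ for fixed $\alpha,\beta,\Dx$: from \eqref{eq:K_ab_parts}, as $t\to0$ this function has at worst a $t^{-1/2}$ singularity --- attained precisely when $\alpha=\beta$, where $(1\wedge C_1t/\Dx^2)^{d/2}$ cancels the $t^{-d/2}$ carried by the $d$ factors $\tilde L(C_1t,0)$, leaving $\Dx^{-d}t^{-1/2}$, whereas if some component has $\alpha^j\neq\beta^j$ then the corresponding factor $\tilde L(C_1t,(\alpha^j-\beta^j)\Dx)$ vanishes like $\sqrt t$ and improves the behaviour --- and it is bounded on $[\delta,T]$ for every $\delta>0$. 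Thus $f(t,\alpha-\beta)\le C(\alpha,\beta,\Dx,T)\,t^{-1/2}$ on $(0,T]$, whence $\int_0^T t^{(m-1)/2}f(t,\alpha-\beta)\,\d t\le C(\alpha,\beta,\Dx,T)\int_0^T t^{(m-2)/2}\,\d t=\tfrac{2}{m}C(\alpha,\beta,\Dx,T)\,T^{m/2}$. Summing against $CC_3^m/\Gamma(m/2)$ gives $\sum_{m\ge1}\|K^{(m)}_{\alpha,\beta}\|_{L^1([0,T])}\lesssim\sum_{m\ge1}(C_3\sqrt T)^m/(m\,\Gamma(m/2))$, which again converges by the ratio test, so $\sum_m K^{(m)}_{\alpha,\beta}$ converges absolutely in $L^1([0,T])$.

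I do not anticipate a genuine obstacle: once Lemma~\ref{lem:auxcalc2} is available the argument is a single summation plus two routine verifications --- that $\Gamma(m/2)$ grows fast enough to absorb the geometric factor $C_3^m(1+T)^{m/2}$ uniformly on $[0,T]$ (ratio test), and that $f(\cdot,\alpha-\beta)$ has an integrable ($t^{-1/2}$) singularity at $t=0$, the borderline case being $\alpha=\beta$.
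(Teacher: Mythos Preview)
Your proof is correct and follows essentially the same approach as the paper: sum the per-term bound \eqref{eq:Km_ab_bound} from Lemma~\ref{lem:auxcalc2}, control the resulting scalar series $\sum_{m\ge1}C_3^m t^{(m-1)/2}/\Gamma(m/2)$ uniformly on $[0,T]$, and then use the integrability of $f(\cdot,\alpha-\beta)$ (with its $t^{-1/2}$ worst-case singularity at the origin) for the $L^1$ conclusion. The only difference is cosmetic: where you bound the scalar series by $S^\ast$ and invoke the ratio test, the paper instead splits the sum into even and odd $m$, uses $(m+1)\Gamma(m+1/2)\ge\Gamma(m+1)$, and recognises the result as $(C_3^2t^{1/2}+C_3^3t+C_3)e^{C_3^2t}$, obtaining an explicit closed-form constant; your route is slightly shorter, theirs gives a concrete expression for the implied $T$-dependence.
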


\begin{proof}
Let $C, C_1 > 0$ be the constants
given in Lemma \ref{lem:auxcalc2}.
Let $C_3$ be as given at the end of
the proof to Lemma \ref{lem:auxcalc2}.
From the pointwise bounds
\eqref{eq:Km_ab_bound}, for
$\alpha, \beta \in \Z^d$
and $t \in [0,T]$, we have
\begin{align*}
	\bigl|\Phi_{\alpha, \beta}(t)\bigr|
	&\le \sum_{m = 1}^\infty
	\bigl|K_{\alpha,\beta}^{(m)}(t)\bigr|
	\le \sum_{m = 1}^\infty
	\frac{C_3^m}{\Gamma(m/2)}t^{\bk{m-1}/2}
	f(t,\alpha-\beta).
\end{align*}
For $m \ge 0$,
$$
(m+1)\,\Gamma(m+1/2)\ge \Gamma(m+1).
$$
Hence, we can estimate
the sum  as follows:
\begin{align*}
	t^{-{1/2}}\sum_{m = 1}^\infty
	\frac{C_3^m}{\Gamma(m/2)} t^{m/2}
	& =t^{-{1/2}}\sum_{m = 1}^\infty
	\frac{C_3^{2m}}{\Gamma(m)} t^m
	+ t^{-{1/2}} \sum_{m = 0}^{\infty}
	\frac{C_3^{2m + 1}}{\Gamma(m + 1/2)} t^{m + 1/2}
	\\ &
	\le C_3^2 t^{{1/2}}\sum_{m = 0}^\infty
	\frac{C_3^{2m}}{\Gamma(m + 1)} t^m
	+ \sum_{m = 0}^{\infty}
	\frac{C_3^{2m + 1 } (m + 1)}{\Gamma(m + 1)} t^{m}
	\\ &
	= C_3^2 t^{{1/2}}\sum_{m = 0}^\infty
	\frac{C_3^{2m}}{\Gamma(m + 1)} t^m
	+C_3 t \sum_{m = 0}^{\infty}
	\frac{C_3^{2m} m}{\Gamma(m + 1)} t^{m - 1}
	\\* &
	\relspace + C_3  \sum_{m = 0}^{\infty}
	\frac{C_3^{2m}}{\Gamma(m + 1)} t^{m}
	\\ &
	=\bk{C_3^2 t^{{1/2}} +  C_3^3 t + C_3} e^{C_3^2 t}.
\end{align*}
Therefore, in the notation of \eqref{eq:K_ab_parts},
\begin{align*}
	\bigl|\Phi_{\alpha, \beta}(t)\bigr|
	\le \bk{C_3^2 T^{\hf}+C_3^3 T + C_3} e^{C_3^2 T}
	f(t,\alpha-\beta).
\end{align*}
Since the series is absolutely convergent
pointwise in $\alpha, \beta, t$, and is
majorised by an element in $L^1([0,T])$
for each $(\alpha, \beta)$, the dominated
convergence theorem implies the proposition.
\end{proof}



\end{document}